\numberwithin{equation}{section}
\theoremstyle{plain}
\newtheorem{theorem}{Theorem}[section]
\newtheorem{conjecture}{Conjecture}[section]
\newtheorem*{theorem*}{Theorem}
\newtheorem{lemma}{Lemma}[section]
\newtheorem{corollary}{Corollary}[section]
\newtheorem{claim}{Claim}[section]
\theoremstyle{definition}
\newtheorem*{definition*}{Definition}
\newtheorem{remark}{Remark}[section]
\begin{document}
 
 \title{On the number of zeros of $L-$functions attached to cusp forms of half-integral weight}

\author{{Pedro Ribeiro}} 
\thanks{
{\textit{ Keywords}} :  {Zeros of Dirichlet series, Hardy's theorem, Half-integral weight cusp forms}

{\textit{2020 Mathematics Subject Classification} }: {Primary: 11F37, 11F66, 11M41.}

Department of Mathematics, Faculty of Sciences of University of Porto, Rua do Campo Alegre,  687; 4169-007 Porto (Portugal). 

\,\,\,\,\,E-mail: pedromanelribeiro1812@gmail.com}
\date{}
 
\maketitle

\begin{abstract} Meher et al. [Proc. Amer. Math. Soc. 147 (2019)] have recently established that $L-$functions attached to certain cusp forms of half-integral weight have infinitely many zeros on the critical line. Kim [J. Numb. Th. 253 (2023)] obtained analogous results for $L-$functions attached to cusp forms twisted by an additive character $e\left(\frac{p}{q}n\right)$, $\frac{p}{q}\in\mathbb{Q}$. We extend the results of these authors by giving a lower bound for the number of such zeros.

We start by developing a variant of a method of de la Val\'ee Poussin which seems to have interest as it avoids the evaluation of exponential sums. We finish the paper with an improvement of our first estimate by using Lekkerkerker's variant of the Hardy-Littlewood method.
\end{abstract}

\tableofcontents 

\pagenumbering{arabic}

\section{Introduction and Main Results}
For $k,N\in\mathbb{N}$, let us denote by $S_{k+\frac{1}{2}}\left(\Gamma_{0}(4N)\right)$
the space of cusp forms of weight $k+\frac{1}{2}$ on the congruence
subgroup $\Gamma_{0}(4N)$. This means that any element $f(z)\in S_{k+\frac{1}{2}}\left(\Gamma_{0}(4N)\right)$
satisfies\footnote{Throughout this paper we define $\sqrt{z}=z^{1/2}$ so that $-\frac{\pi}{2}<\arg(z^{1/2})\leq\frac{\pi}{2}$.}
\begin{equation}
f\left(\frac{az+b}{cz+d}\right)=\left(\frac{c}{d}\right)^{2k+1}\epsilon_{d}^{-2k-1}\left(cz+d\right)^{k+\frac{1}{2}}f(z),\label{modular properties half integral weight}
\end{equation}
whenever $\left(\begin{array}{cc}
a & b\\
c & d
\end{array}\right)\in\Gamma_{0}(4N)$. Here, $\left(\frac{c}{d}\right)$ is Shimura's extension of the
Jacobi symbol [\cite{shimura_half}, p.442] and $\epsilon_{d}$ is
defined by
\begin{equation}
\epsilon_{d}=\begin{cases}
1 & d\equiv1\,\,\mod4\\
i & d\equiv3\,\,\mod4
\end{cases}.\label{character shimura epsilon d}
\end{equation}
The theory of modular forms of half-integral weight was extensively
developed by Shimura and we refer to the seminal paper \cite{shimura_half}
for some classical facts about these modular forms. Since $\left(\begin{array}{cc}
1 & 1\\
0 & 1
\end{array}\right)\in\Gamma_{0}(4N)$, $f(z)$ has a Fourier expansion of the form
\begin{equation}
f(z)=\sum_{n=1}^{\infty}a_{f}(n)\,e^{2\pi inz},\label{Fourier expansion of cusp intro}
\end{equation}
where $z\in\mathbb{H}=\left\{ w\in\mathbb{C}\,:\,\text{Im}(w)>0\right\} $.
We can attach a Dirichlet series $L(s,f)$ to $f(z)$,
\begin{equation}
L(s,f)=\sum_{n=1}^{\infty}\frac{a_{f}(n)}{n^{s}},\label{Dirichlet L series cusp form at intro first}
\end{equation}
and show that, for sufficiently large $\text{Re}(s)$, this series
converges absolutely. The usual argument invoked to study the analytic
continuation of $L-$functions of cusp forms with integral weight
can also be applied to the half-integral case. Thus, $L(s,f)$ can
be analytically continued to the whole complex plane $\mathbb{C}$
as an entire function of $s$. Moreover, it will satisfy Hecke's functional
equation.

In order to state its functional equation, let $W_{4N}$ denote the
Fricke involution acting on $S_{k+\frac{1}{2}}\left(\Gamma_{0}(4N)\right)$,
\begin{equation}
\left(f|W_{4N}\right)(z)=i^{k+\frac{1}{2}}\left(4N\right)^{-\frac{k}{2}-\frac{1}{4}}z^{-k-\frac{1}{2}}f\left(-\frac{1}{4Nz}\right).\label{Fricke involution definition}
\end{equation}
Then the function $L(s,f)$ satisfies the functional equation
\begin{equation}
\left(\frac{2\pi}{\sqrt{4N}}\right)^{-s}\Gamma\left(s\right)\,L\left(s,f\right)=\left(\frac{2\pi}{\sqrt{4N}}\right)^{-\left(k+\frac{1}{2}-s\right)}\Gamma\left(k+\frac{1}{2}-s\right)\,L\left(k+\frac{1}{2}-s,f|W_{4N}\right).\label{functional equation first Cusp}
\end{equation}
Adopting the notation given in \cite{meher_half}, let us note that (\ref{functional equation first Cusp}) can be written in
the symmetric form
\begin{equation}
\Lambda\left(s,f\right)=\Lambda\left(k+\frac{1}{2}-s,f|W_{4N}\right),\label{Functional equation in terms of Lambda}
\end{equation}
where $\Lambda(s,f)$ represents the completed $L-$function
\begin{equation}
\Lambda\left(s,f\right):=\left(\frac{2\pi}{\sqrt{4N}}\right)^{-s}\Gamma\left(s\right)\,L\left(s,f\right).\label{big lambda definition}
\end{equation}

Despite the formal similarities with the $L-$functions attached to
a Hecke eigenform with integral weight, $L-$functions associated
with cusp forms belonging to $S_{k+\frac{1}{2}}\left(\Gamma_{0}(4N)\right)$
do not possess an Euler product. This imposes some restrictions and
adds complications in studying the location of zeros of such $L-$functions.
We refer to the papers \cite{meher_half} and \cite{meher_srinivas} for nice
overviews of the non-vanishing results for $L(s,f)$ available in
the literature.

Our focus in this paper will be the study of the zeros of $L(s,f)$,
$f\in S_{k+\frac{1}{2}}(\Gamma_{0}(4N))$, on the critical line. According
to [\cite{meher_half}, p.132], Yoshida \cite{yoshida} was the first
mathematician to study the zeros of such $L-$functions. To illustrate
one of Yoshida's examples, let
\[
\theta(z)=\sum_{n\in\mathbb{Z}}e^{2\pi in^{2}z},\,\,\,\,\,\eta(z)=e^{\frac{\pi iz}{12}}\prod_{n=1}^{\infty}\left(1-e^{2\pi inz}\right),\,\,\,\,z\in\mathbb{H}.
\]
Then one can construct [\cite{shimura_half}, p.477] a cusp form $g(z)\in S_{\frac{9}{2}}\left(\Gamma_{0}(4)\right)$
by taking
\begin{equation}
g(z):=\theta(z)^{-3}\eta(2z)^{12}.\label{cusp form 9/2 intro}
\end{equation}
Yoshida considered the $L-$function attached to $g$, $L(s,g)$,
and from the calculation of some of its zeros {[}\cite{yoshida}, p.675{]}, he showed that the analogue of the Riemann hypothesis for
$L(s,g)$ is false.

Nevertheless, since there are Dirichlet series (such as the Epstein
zeta function attached to a binary quadratic form) which do not satisfy the Riemann hypothesis but still possess a positive proportion
of zeros on the critical line, one may study, in spite of Yoshida's
counterexample, analogues of Hardy's Theorem for this class of $L-$functions.

As far as we know, the work of J. Meher, S. Pujahari and K. Srinivas \cite{meher_srinivas}
contains the first result of Hardy-type for $L-$functions associated
with half-integral weight cusp forms. They established the following
theorem.

\paragraph*{Theorem A \cite{meher_srinivas}}
\textit{Suppose that $f(z)=\sum_{n=1}^{\infty}a_{f}(n)\,e^{2\pi inz}\in S_{k+\frac{1}{2}}\left(\Gamma_{0}(4)\right)$
is an eigenform for all Hecke operators $T_{n^{2}}$ and for the operator
$W_{4}$ (\ref{Fricke involution definition}). Assume also that all
the Fourier coefficients of $f(z)$, $a_{f}(n)$, are either real
or purely imaginary numbers. Then the $L-$function (\ref{Dirichlet L series cusp form at intro first})
attached to $f$ has infinitely many zeros on the critical line $\text{Re}(s)=\frac{k}{2}+\frac{1}{4}$.}

We know that $\text{dim}S_{\frac{9}{2}}\left(\Gamma_{0}(4)\right)=1$
{[}\cite{shimura_half}, p.477{]}, so the cusp form $g(z)$ given by (\ref{cusp form 9/2 intro})
is an eigenform for all the Hecke operators and satisfies $\left(g|W_{4}\right)(z)=g(z)$.
Thus, despite violating the Riemann hypothesis, $L(s,g)$ has infinitely many zeros at the critical line $\text{Re}(s)=\frac{9}{4}$.

Recently, Meher, Pujahari and Shankhadhar \cite{meher_half} improved Theorem A by extending it to forms of higher level.

\paragraph*{Theorem B \cite{meher_half}}
\textit{Let $N$ be a perfect square and $f(z)=\sum_{n=1}^{\infty}a_{f}(n)\,e^{2\pi inz}\in S_{k+\frac{1}{2}}\left(\Gamma_{0}(4N)\right)$.
Assume also that all the Fourier coefficients of $f(z)$, $a_{f}(n)$,
are either real or purely imaginary numbers. Then the function
\[
L\left(s,f\right)\pm L\left(s,f|W_{4N}\right)
\]
has infinitely many zeros of odd order on the critical line $\text{Re}(s)=\frac{k}{2}+\frac{1}{4}$.}

The proofs of Theorems A and B are different in nature. The proof of Theorem
A follows a classical argument first given by Landau \cite{landau_hardy}
and its main idea is to contrast the behavior of the integrals $\left|\intop_{T}^{2T}\Lambda\left(\frac{k}{2}+\frac{1}{4}+it,f\right)\,dt\right|$
and $\intop_{T}^{2T}\left|\Lambda\left(\frac{k}{2}+\frac{1}{4}+it,f\right)\right|\,dt$
as $T\rightarrow\infty$. Due to the oscillations coming from the
$\Gamma-$factor, the first integral $\intop_{T}^{2T}\Lambda\left(\frac{k}{2}+\frac{1}{4}+it,f\right)\,dt$
has substantial cancellation and an upper bound for it can be obtained by estimating an exponential sum {[}\cite{meher_srinivas}, p.930, Lemma 3.2{]}. On the other hand, the proof of Theorem B uses
Wilton's variant of Hardy's theorem \cite{Wilton_tau} and it is much closer
in spirit to Hardy's original proof \cite{hardy_note}.

\bigskip{}

Kim studied the zeros of this class $L-$functions in Wilton's setting. Using a
very elegant argument involving distributions, he directed his study towards the zeros of the additively twisted $L-$function,
\begin{equation}
L_{p/q}(s,f)=\sum_{n=1}^{\infty}\frac{a_{f}(n)\,e^{\frac{2\pi ip}{q}n}}{n^{s}},\,\,\,\,\,\frac{p}{q}\in\mathbb{Q},\,\,\,\,f\in S_{k+\frac{1}{2}}\left(\Gamma_{0}(4N)\right).\label{twisted L function rational Kim}
\end{equation}
Just like the Dirichlet series (\ref{Dirichlet L series cusp form at intro first}),
$L_{p/q}(s,f)$ converges absolutely when $\text{Re}(s)$ is sufficiently
large. Moreover, in analogy to the integral case {[}\cite{Yogananda},
Theorem 2.1 (a){]}, $L_{p/q}(s,f)$ satisfies Hecke's functional equation
when $\frac{p}{q}$, $q>0$, is a rational number which is $\Gamma_{0}(4N)-$equivalent
to $i\infty$.  This functional equation can be explicitly written in the form {[}\cite{kim}, p.174, Lemma
4.19{]} 
\begin{equation}
\left(\frac{2\pi}{q}\right)^{-s}\Gamma(s)\,L_{p/q}(s,f)=i^{k+\frac{1}{2}}\left(\frac{-q}{p}\right)^{-2k-1}\epsilon_{p}^{2k+1}\,\left(\frac{2\pi}{q}\right)^{-\left(k+\frac{1}{2}-s\right)}\Gamma\left(k+\frac{1}{2}-s\right)\,L_{-\overline{p}/q}\left(k+\frac{1}{2}-s,f\right),\label{functional equation doyon kim paper}
\end{equation}
where $\overline{p}$ is such that $p\,\overline{p}\equiv1\mod\,q$.
Also, $\left(\frac{-q}{p}\right)$ and $\epsilon_{p}$ are given in
(\ref{modular properties half integral weight}).
Using (\ref{functional equation doyon kim paper}) and appealing to
the notion of a distribution vanishing to infinite order, Kim established
the following theorem {[}\cite{kim}, p.160, Theorem 1.17{]}.

\paragraph*{Theorem C \cite{kim}}
\textit{Let $N$ be a positive integer and $\frac{p}{q}$, $q>0$, a rational number
which is $\Gamma_{0}(4N)-$equivalent to $i\infty$ and such that
$p^{2}\equiv1\mod q$. Moreover, let $f(z)=\sum_{n=1}^{\infty}a_{f}(n)\,e^{2\pi inz}\in S_{k+\frac{1}{2}}\left(\Gamma_{0}(4N)\right)$,
with $a_{f}(n)$ being either real or purely imaginary numbers.}

\textit{Then the additively twisted $L-$function $L_{p/q}(s,f)$ (\ref{twisted L function rational Kim})
has infinitely many zeros at the critical line $\text{Re}(s)=\frac{k}{2}+\frac{1}{4}$.}

\bigskip{}

The main goal of our paper is to improve on Theorem B and Theorem
C above by giving explicit quantitative estimates for the number of
zeros of $L\left(\frac{k}{2}+\frac{1}{4}+it,f\right)$ and $L_{p/q}\left(\frac{k}{2}+\frac{1}{4}+it,f\right)$.
Like Theorem B above, our first result uses the main setting of Wilton's
variant of Hardy's theorem \cite{Wilton_tau}.

\begin{theorem} \label{theorem 1.1}
Let $N$ be a perfect square and $f(z)=\sum_{n=1}^{\infty}a_{f}(n)\,e^{2\pi inz}\in S_{k+\frac{1}{2}}\left(\Gamma_{0}(4N)\right)$.
Assume also that the Fourier coefficients of $f(z)$, $a_{f}(n)$, are either real or purely imaginary numbers.

Let $N_{0}^{\pm}(T)$ represent the number of zeros of odd order of
the function $L(s,f)\pm L(s,f|W_{4N})$ written in the form $s=\frac{k}{2}+\frac{1}{4}+it,$
$0\leq t\leq T$. Then we have that
\begin{equation}
N_{0}^{\pm}(T)=\Omega\left(T^{\frac{1}{2}}\right),\label{omega statement}
\end{equation}
or, by other words, there exists some $d>0$ such that
\begin{equation}
\limsup_{T\rightarrow\infty}\frac{N_{0}^{\pm}(T)}{T^{1/2}}>d.\label{lim sup  stattteeeement}
\end{equation}
\end{theorem}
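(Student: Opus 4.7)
The plan is to adapt the symmetrization trick underlying Theorem B to a quantitative count of sign changes. Define
\[
\Lambda^{\pm}(s):=\Lambda(s,f)\pm\Lambda(s,f|W_{4N}),
\]
and observe that, because $N$ is a perfect square the Fricke involution satisfies $W_{4N}^{2}=\mathrm{id}$, so applying (\ref{Functional equation in terms of Lambda}) to each term yields the clean functional equation $\Lambda^{\pm}(s)=\pm\,\Lambda^{\pm}(k+\tfrac12-s)$. Combined with the reality (or pure imaginariness) of the Fourier coefficients $a_{f}(n)$, this forces $\Lambda^{\pm}$ to be real (respectively purely imaginary) on the critical line $s=\sigma_{0}+it$, $\sigma_{0}:=\tfrac{k}{2}+\tfrac14$. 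After extracting an appropriate unimodular phase one obtains a real-valued function $Z^{\pm}(t)$ whose sign changes in $[0,T]$ coincide with the odd-order zeros counted by $N_{0}^{\pm}(T)$, since the Gamma-factor does not vanish on the critical line.

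I would then argue by contradiction. Suppose $N_{0}^{\pm}(T)=o(T^{1/2})$. The sign changes of $Z^{\pm}$ partition a dyadic block $[T,2T]$ into at most $M+1=o(T^{1/2})$ subintervals $I_{j}$ of constant sign, so
\[
\sum_{j}\Bigl|\int_{I_{j}}Z^{\pm}(t)\,dt\Bigr|\;=\;\int_{T}^{2T}|Z^{\pm}(t)|\,dt,
\]
and by pigeonhole some $I_{j_{0}}\subseteq[T,2T]$ satisfies
\[
\Bigl|\int_{I_{j_{0}}}Z^{\pm}(t)\,dt\Bigr|\;\gg\;T^{-1/2}\int_{T}^{2T}|Z^{\pm}(t)|\,dt.
\]
The goal is then to derive a contradiction by showing, on the one hand, that $\int_{T}^{2T}|Z^{\pm}|\,dt$ is large (of order $T^{A}$), and on the other hand that $\bigl|\int_{a}^{b}Z^{\pm}\bigr|$ is small (of order $T^{B}$) for every $[a,b]\subseteq[T,2T]$, with $A-B\geq\tfrac12$.

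The \emph{variant of de la Vall\'ee Poussin} promised in the abstract is precisely the upper bound $\bigl|\int_{a}^{b}Z^{\pm}\bigr|\ll T^{B}$. The idea is to express this integral as a piece of a contour integral of $\Lambda^{\pm}(s)$ along the critical line, complete it to the full vertical line using the exponential decay $|\Gamma(\sigma_{0}+it)|\sim|t|^{\sigma_{0}-1/2}e^{-\pi|t|/2}$, and then shift the contour off the critical line by invoking $\Lambda^{\pm}(s)=\pm\Lambda^{\pm}(k+\tfrac12-s)$. Since this procedure uses only the analytic continuation, the functional equation, and the Gamma-factor cancellation, it bypasses any estimate on the exponential sums $\sum_{n\leq x}a_{f}(n)e(n\alpha)$ that would ordinarily arise via the Hardy--Wilton route underlying Theorem B. For the matching lower bound $\int_{T}^{2T}|Z^{\pm}|\gg T^{A}$, I would combine a Parseval-type second moment for the Dirichlet series of $L\pm L|W_{4N}$ with a H\"older descent to the first moment, controlled pointwise by the convexity bound for $L(\sigma_{0}+it,f)$.

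The main technical obstacle I expect is calibrating the exponents so that $A-B\geq\tfrac12$ is achieved \emph{without} any exponential-sum input; the upper bound $B$ is the more delicate of the two. Shifting the contour off the critical line introduces polynomial growth of $\Lambda^{\pm}$, and the endpoints $a,b$ contribute boundary terms that must be controlled solely by Gamma-factor decay. Tracking these losses carefully so that $B$ stays strictly below $A-\tfrac12$, uniformly in the position and length of $[a,b]$, is the novel feature of the Poussin-style argument being developed here, and is where the bulk of the analytic work should go.
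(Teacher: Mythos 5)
There is a genuine gap, and the approach you describe is in fact not the one the paper takes (nor the method of de la Vall\'ee Poussin that the paper advertises). The paper's proof proceeds by contradiction: it assumes the ordinates $\tau_n$ of the odd-order critical zeros of $L(s,f)\pm L(s,f|W_{4N})$ eventually satisfy $\tau_n\geq hn^2$, forms the entire product
\[
\varphi(y)=\prod_{j=1}^{\infty}\Bigl(1-\frac{y^{2}}{\tau_{j}^{2}}\Bigr)=\sum_{j\geq 0}(-1)^{j}a_{2j}y^{2j},
\]
bounds the $a_{2j}$ by comparing with the Taylor coefficients of $\sinh(\pi\sqrt{y})\sin(\pi\sqrt{y})/(\pi^{2}y)$, and then studies $Q_{f}(u)=\int_{0}^{\infty}R_{f}(t)\,\varphi(t)\cosh\bigl((\tfrac{\pi}{2}-u)t\bigr)\,dt$. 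Because $R_{f}\varphi$ has constant sign, $|Q_{f}(u)|$ is positive and decreasing; but the product formula for the integral transform (Lemma 3.1) and the uniform derivative bounds on $f(-e^{-iu}/\sqrt{4N})$ (Lemma 3.2), combined with the $a_{2j}$ bounds, force $Q_{f}(u)\to0$ as $u\to0^{+}$ once $h$ is large enough — a contradiction. No dyadic block $[T,2T]$, no pigeonhole, and no contour shift appear; the cancellation is extracted from the multiplicative weight $\varphi(t)$, which is precisely why the method avoids exponential sums.

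Your proposal is instead a Landau/Hardy--Littlewood-style comparison of $\int_{T}^{2T}|Z^{\pm}|$ and $\bigl|\int_{a}^{b}Z^{\pm}\bigr|$ over dyadic blocks, and it has two concrete problems beyond the unresolved exponent calibration you yourself flag. First, the claim that shifting the contour off the critical line bypasses exponential sums is not correct: once you push the line of integration to $\mathrm{Re}(s)=c>\tfrac{k}{2}+\tfrac{5}{4}$ and expand $L$ into its Dirichlet series, the resulting bound requires estimating
\[
\sum_{n}a_{f}(n)n^{-c}\int_{a}^{b}\Gamma(c+it)\Bigl(\frac{2\pi n}{\sqrt{4N}}\Bigr)^{-it}dt,
\]
which is exactly the exponential-integral input of Landau's method (cf.\ Lemma 3.2 of \cite{meher_srinivas}) — the very thing the paper's Theorem~\ref{theorem 1.1} is designed to circumvent. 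Second, on the critical line $|\Lambda^{\pm}(\sigma_{0}+it)|$ carries the Stirling factor $|t|^{\sigma_{0}-1/2}e^{-\pi|t|/2}$, so both $\int_{T}^{2T}|Z^{\pm}|$ and $\int_{a}^{b}|Z^{\pm}|$ are dominated by the left endpoint and decay like $e^{-\pi T/2}$; without a compensating weight (as in $e^{(\pi/2-\epsilon)t}$ in the paper's proof of Theorem~\ref{theorem 1.2}, or the $\cosh$ kernel in the proof of Theorem~\ref{theorem 1.1}), the pigeonhole step does not produce a meaningful gain, since the mass of $\int_{T}^{2T}|Z^{\pm}|$ is not spread across the block. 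To salvage your route you would effectively have to reintroduce such a weight, at which point you are back to either Hardy--Littlewood's method (which needs exponential sums) or de la Vall\'ee Poussin's product device — the latter being what the paper actually does.
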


As far as we know, our Theorem \ref{theorem 1.1} establishes for the first time
a quantitative lower bound on the number of critical zeros of
$L-$functions attached to half-integral weight cusp forms. As an
immediate corollary, one obtains.

\begin{corollary}
Let $N$ be a perfect square and $f(z)=\sum_{n=1}^{\infty}a_{f}(n)\,e^{2\pi inz}\in S_{k+\frac{1}{2}}\left(\Gamma_{0}(4N)\right)$
be such that $f|W_{4N}=f$ or $f|W_{4N}=-f$. Assume also that the
Fourier coefficients of $f(z)$, $a_{f}(n)$, are either real or purely
imaginary numbers.

Let $N_{0}(T)$ denote the number of zeros of odd order of $L(s,f)$
of the form $s=\frac{k}{2}+\frac{1}{4}+it$, $0\leq t\leq T$. Then
we have that
\begin{equation}
N_{0}(T)=\Omega\left(T^{\frac{1}{2}}\right)\,\,\,\,\text{or, equivalently, }\,\,\,\limsup_{T\rightarrow\infty}\frac{N_{0}(T)}{T^{1/2}}>d,\label{result eigenvector Fricke}
\end{equation}
for some $d>0$.
\end{corollary}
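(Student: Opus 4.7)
The plan is to deduce this corollary directly from Theorem \ref{theorem 1.1} by exploiting the fact that being an eigenvector of the Fricke involution $W_{4N}$ collapses one of the two combinations $L(s,f)\pm L(s,f|W_{4N})$ onto a nonzero scalar multiple of $L(s,f)$ itself.

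First I would note that the Dirichlet series in (\ref{Dirichlet L series cusp form at intro first}) depends linearly on the Fourier coefficients of its argument, so if $f|W_{4N}=\epsilon f$ with $\epsilon\in\{+1,-1\}$, then $L(s,f|W_{4N})=\epsilon\,L(s,f)$. Consequently,
\[
L(s,f)+\epsilon\,L(s,f|W_{4N})=2L(s,f),
\]
while the opposite-sign combination vanishes identically. In particular, the zeros of odd order of the non-trivial combination $L(s,f)+\epsilon L(s,f|W_{4N})$ are precisely the zeros of odd order of $L(s,f)$.

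Next I would invoke Theorem \ref{theorem 1.1} with this choice of sign, observing that the hypotheses of the theorem (namely, $N$ a perfect square and $a_f(n)$ either all real or all purely imaginary) are inherited verbatim from the hypotheses of the corollary. The theorem then supplies $N_0^{\epsilon}(T)=\Omega(T^{1/2})$, and by the preceding identification of zeros we conclude $N_0(T)=N_0^{\epsilon}(T)=\Omega(T^{1/2})$, which is exactly (\ref{result eigenvector Fricke}). No genuine obstacle arises here, since this is a routine specialization of Theorem \ref{theorem 1.1} to the $(\pm 1)$-eigenspaces of $W_{4N}$; the only point worth verifying is the linearity step $L(s,f|W_{4N})=\epsilon L(s,f)$, which follows at once from reading off the Fourier expansion of $\epsilon f$.
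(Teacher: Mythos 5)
Your proof is correct and coincides with the paper's intended (unwritten) derivation: the paper simply says the corollary is "immediate," and the immediate step is precisely what you spell out, namely that $L(s,\cdot)$ is linear in the Fourier coefficients so $f|W_{4N}=\epsilon f$ gives $L(s,f)+\epsilon L(s,f|W_{4N})=2L(s,f)$, after which Theorem~\ref{theorem 1.1} applied with the sign $\epsilon$ yields the conclusion. You also rightly note that only the nontrivial sign is needed, sidestepping the fact that the other combination vanishes identically.
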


We remark that it is possible to get an explicit numerical value for
$d$ in (\ref{lim sup  stattteeeement}) and (\ref{result eigenvector Fricke}).
In fact, $d$ only depends on the Fourier expansion of a function
arising from the slash operator (see (\ref{inequality for number of zeros})
and the proof of Lemma \ref{lemma bound theta} for details).

Our proof of Theorem \ref{theorem 1.1} is inspired by a beautiful argument due to
de la Vall\'ee Poussin \cite{Poussin_zeros}. The belgian mathematician Charles-Jean de la Vale\'e Poussin is mostly famous for his proof of the prime
number theorem. However, his contributions to the study of the zeros
of $\zeta(s)$ on the line $\text{Re}(s)=\frac{1}{2}$ are not so
well-known. Shortly after the appearance of the papers by Hardy and
Landau \cite{hardy_note, landau_hardy} on this topic, de la Vall\'ee Poussin published
two short notes. In one of them, he proved that the number of zeros
of $\zeta(s)$ of the form $s=\frac{1}{2}+it,$ $0<t<T$, satisfies
the estimate $N_{0}(T)=\Omega\left(T^{1/2}\right)$ as $T\rightarrow\infty$.
However, as he remarks {[}\cite{Poussin_zeros}, p.421{]} \emph{``Cette
m\'ethode ne s'applique pas \`a toutes les fonctions qui interviennent
dans l'\'etude de la progression arithm\'etique''. }\footnote{or by other words, his method does not work for certain Dirichlet
$L-$functions.}\emph{ }In order to circumvent this difficulty, de la Vall\'ee Poussin
develops in a later note an argument providing estimates for the difference
$N_{0}(T+H)-N_{0}(T)$, $\,\,T^{\frac{3}{4}+\epsilon}\leq H(T)\leq T$.
His second note was the first improvement of the results of Landau
about the zeros of Dirichlet $L-$functions.

Despite the beauty of the arguments presented in these short papers,
the results and methods of de la Vall\'ee Poussin quickly faded into
oblivion \cite{Poussin_bio} due to the much sharper estimates (proved around the same
time) by Hardy and Littlewood \cite{hardy_littlewood_contributions,hardy_littlewood_zeros}\footnote{The first improvements on the number of critical zeros of $\zeta\left(s\right)$
by Hardy and Littlewood were in fact independent of those of de la
Vall\'ee Poussin. An additional note to the paper {[}\cite{hardy_littlewood_contributions},
p.196{]} was written by Hardy and Littlewood just to acknowledge
de la Vall\'ee Poussin's contributions.} and, decades later, by Selberg and Levinson. Even Titchmarsh's text
on the zeta function, known for its very precise historical background,
omits de la Vall\'ee Poussin's contributions in this regard, notwithstanding
the fact that it exposes (just for the sake of historical interest)
five different proofs of Hardy's theorem {[}\cite{titchmarsh_zetafunction}, Chapter
X{]}. \footnote{It should be mentioned, however, that de la Vall\'ee Poussin's papers
\cite{Poussin_zeros, Poussin_zeros_II} appear as references in Titchmarsh's
book but they are not mentioned in the main text.}

\bigskip{}

With the proof of Theorem \ref{theorem 1.1}, we hope to bring de la Vall\'ee's Poussin
elegant arguments to a modern mathematical audience, as we believe
that the method followed in the proof of the result (\ref{omega statement})
has a theoretical interest in itself. In a historical sense, it is
closely connected to Hardy's original note \cite{hardy_note}, as well as
to Wilton's variant \cite{Wilton_tau}, which, as mentioned above, was used
in the proof of Theorem B. Thus, in a sense, our Theorem \ref{theorem 1.1} can be
seen as a complement to Theorem B due to Meher, Pujahari and Shankhadhar. It has also the advantage of giving a quantitative estimate for the number of zeros of a Dirichlet series without depending on any evaluation of exponential sums, which is rather unusual.

\bigskip{}

Similarly to de la Vall\'ee Poussin's case, the result (\ref{omega statement})
in itself can be quickly surpassed by no more than the same ideas
that Hardy and Littlewood used to prove that the function $\zeta\left(\frac{1}{2}+it\right)$,
$0\leq t\leq T$, has $\gg T$ zeros \cite{hardy_littlewood_zeros}. Our
next Theorem establishes an analogue of Hardy-Littlewood's famous result.

\begin{theorem} \label{theorem 1.2}
Let $N$ be a perfect square and $f(z)=\sum_{n=1}^{\infty}a_{f}(n)\,e^{2\pi inz}\in S_{k+\frac{1}{2}}\left(\Gamma_{0}(4N)\right)$.
Assume also that the Fourier coefficients of $f(z)$, $a_{f}(n)$, are either real or purely imaginary numbers.

Let $N_{0}^{\pm}(T)$ represent the number of zeros of odd order of
the function $L(s,f)\pm L(s,f|W_{4N})$ written in the form $s=\frac{k}{2}+\frac{1}{4}+it,$
$0\leq t\leq T$.

Then we have that
\begin{equation}
N_{0}^{\pm}(T)\gg T\,\,\,\,\,\text{or, equivalently, }\,\,\,\liminf_{T\rightarrow\infty}\frac{N_{0}^{\pm}(T)}{T}>d,\label{omega statement-1}
\end{equation}
for some $d>0$.
\end{theorem}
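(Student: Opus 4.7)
The plan is to follow Lekkerkerker's refinement of the Hardy-Littlewood method, adapted to the half-integral weight setting. First, using the functional equation (\ref{Functional equation in terms of Lambda}) together with the reality (or pure imaginariness) of the Fourier coefficients $a_f(n)$, I would build a real-valued Hardy-type function
\[
Z^{\pm}(t) := \chi^{\pm}(t)\,\bigl(L(s_0+it,f) \pm L(s_0+it, f|W_{4N})\bigr), \qquad s_0 = \tfrac{k}{2}+\tfrac{1}{4},
\]
where $\chi^{\pm}(t)$ is a unimodular factor obtained from the argument of the Gamma factor in (\ref{big lambda definition}) and from the sign of the functional equation. Away from the (absent) zeros of $\chi^{\pm}$, the odd-order zeros of $Z^{\pm}$ coincide with those of $L(s,f) \pm L(s, f|W_{4N})$ on the critical line and with the sign changes of the real-valued function $Z^{\pm}$.

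Next, I would establish the two inputs required by the Hardy-Littlewood-Lekkerkerker dichotomy. The first is a mean-square lower bound of the form
\[
\intop_{T}^{2T} \bigl|Z^{\pm}(t)\bigr|^{2}\,dt \,\gg\, T\log T,
\]
to be obtained from an approximate functional equation for $L(s,f) \pm L(s, f|W_{4N})$ combined with a Montgomery-Vaughan-type mean-value theorem for general Dirichlet polynomials. The second is a smooth-cancellation estimate showing that, for every sufficiently smooth weight $\Phi$ compactly supported in an interval of length $H \ll T$ inside $[T,2T]$, the smoothed first moment $\bigl|\int Z^{\pm}(t)\,\Phi(t)\,dt\bigr|$ decays faster than any fixed power of $T$. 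This would follow by representing the integral as a contour integral of $\Lambda(s,f)\pm\Lambda(s, f|W_{4N})$ against the Mellin transform of $\Phi$, shifting the contour into the half-plane of absolute convergence of the Dirichlet series, and exploiting the rapid decay of the Mellin transform of a smooth compactly supported function.

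The two inputs are then combined via the Lekkerkerker argument: localising $Z^{\pm}$ by smooth cutoffs on short subintervals of $[T,2T]$, the mean-square bound forces $Z^{\pm}$ to carry substantial $L^{2}$-mass on a positive proportion of such intervals, while the smoothed first-moment bound prevents $Z^{\pm}$ from having constant sign on any such interval. This produces linearly many sign changes in $[T,2T]$; summing dyadically over scales of $T$ then yields $N_{0}^{\pm}(T) \gg T$. The principal technical obstacle will be the second-moment lower bound, since $L(s,f)$ lacks an Euler product and Rankin-Selberg-type inputs are not directly available; one must rely on the general Montgomery-Vaughan mean-value estimate and on a careful analysis of the cross-term between $L(s,f)$ and $L(s, f|W_{4N})$ that arises upon squaring.
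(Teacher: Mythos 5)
Your proposal captures the overall Lekkerkerker shape of the argument (localize on short windows $[t,t+H]\subset[T,2T]$, contrast a first-moment-type quantity that exhibits cancellation against an $L^1$-type quantity that does not, and read off sign changes), and that is indeed the skeleton of the paper's Section~6. However, the two key inputs you propose are not the ones the paper uses, and one of them is substantially harder than what is required.

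The paper does \emph{not} prove (or need) a mean-square lower bound $\int_T^{2T}|Z^{\pm}(t)|^2\,dt\gg T\log T$. Instead it extracts the leading Dirichlet coefficient: writing $\varphi_f(s)=\alpha_f(r)r^{-s}+r^{-s}\varphi_f^\star(s)$ and using the trivial inequality $|1+w|\ge 1+\operatorname{Re} w$, one gets
\[
J(t)\;\geq\;K_3\,T^{\frac{k}{2}-\frac14}\bigl(H+\operatorname{Re}\Psi(t)\bigr),
\]
where $\Psi$ is an antiderivative-type transform of $\varphi_f^\star$ supplied by Lemma~\ref{analytic continuation integral}. The lower bound is then completed not by a Rankin--Selberg-type second moment but by the \emph{upper} bound $\int_T^{2T}|\Psi(t)|^2\,dt\ll T$ (Claim~\ref{our claim}), which follows from Parseval applied to the function $f_1(z/\sqrt{4N})$ together with the estimates of Lemma~\ref{estimate small z}. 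This completely sidesteps the obstacle you flag at the end (no Euler product, no direct Rankin--Selberg input): there is never any need to show the Dirichlet series is large in mean square, only that the tail past the first nonzero coefficient is small in mean square. Your proposed route would, in contrast, force you to establish $\int|Z^\pm|^2\gg T\log T$ by an approximate-functional-equation and Montgomery--Vaughan argument and then to control the cross-term $L(s,f)\cdot\overline{L(s,f|W_{4N})}$, which is considerably more delicate.

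On the other side of the dichotomy, you ask for super-polynomial decay of the smoothed first moment of $Z^\pm$. This is stronger than what is needed and stronger than what the paper proves: the paper's Parseval estimate gives only a polynomial saving, $\int_{\mathbb{R}}|I(t)|^2\,dt\ll \epsilon^{-k-\frac12}H$, which after Cauchy--Schwarz yields $\int_T^{2T}|I(t)|\,dt\ll H^{1/2}T^{\frac{k}{2}+\frac34}$, and that power saving in $H$ is all that is used in the final comparison with $\int_S J$. Moreover the paper works with the sharp rectangular window on $[t,t+H]$ throughout, which makes the Fourier transform of $I$ an explicit $\sin(\xi H/2)/\xi$ factor times the boundary value of the cusp form; a smooth window is not needed. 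In short: your framework is recognizably the right one, but you should replace the mean-square lower bound with Lekkerkerker's leading-coefficient extraction plus the $L^2$ \emph{upper} bound for the logarithmic tail (Lemmas~\ref{wilton bound}--\ref{estimate small z} and Claim~\ref{our claim}), which is both simpler and exactly what eliminates the Euler-product difficulty you correctly identified.
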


By following the proof of Theorem \ref{theorem 1.2}, one sees that there are no
extra complications in adapting the method to the case of additively twisted $L-$functions (\ref{twisted L function rational Kim})
(see Remark \ref{remark kim} below). Thus, the following extension of Kim's theorem
holds.

\begin{theorem} \label{theorem 1.3}
Let $N$ be a positive integer and $\frac{p}{q}$, $q>0$, a rational number
which is $\Gamma_{0}(4N)-$equivalent to $i\infty$ and such that
$p^{2}\equiv1\mod q$. Moreover, let $f(z)=\sum_{n=1}^{\infty}a_{f}(n)\,e^{2\pi inz}\in S_{k+\frac{1}{2}}\left(\Gamma_{0}(4N)\right)$,
with $a_{f}(n)$ being either real or purely imaginary numbers.

If $N_{0,p/q}(T)$ denotes the number of zeros of $L_{p/q}(s,f)$ written in the form $s=\frac{k}{2}+\frac{1}{4}+it,$ $0\leq t\leq T$, then we have that
\begin{equation}
N_{0,p/q}(T)\gg T\,\,\,\,\,\text{or, equivalently, }\,\,\,\liminf_{T\rightarrow\infty}\frac{N_{0,p/q}(T)}{T}>d,\label{Extension Kim result}
\end{equation}
for some $d>0$.
\end{theorem}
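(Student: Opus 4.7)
The plan is to parallel the proof of Theorem \ref{theorem 1.2} step by step, replacing the symmetry $f \leftrightarrow f|W_{4N}$ by the self-symmetry of $L_{p/q}(s,f)$ that arises from (\ref{functional equation doyon kim paper}) once the congruence $p^{2}\equiv 1\,\mathrm{mod}\,q$ is used.

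First, I would recast (\ref{functional equation doyon kim paper}) as a self-dual functional equation. Since $p^{2}\equiv 1\,\mathrm{mod}\,q$, we have $\overline{p}\equiv p\,\mathrm{mod}\,q$, so $-\overline{p}/q$ and $-p/q$ define the same additive character. When the Fourier coefficients $a_{f}(n)$ are real (the purely imaginary case is analogous), term-by-term conjugation of the Dirichlet series gives
\[
L_{-\overline{p}/q}\bigl(k+\tfrac{1}{2}-s,f\bigr)\Big|_{s=\frac{k}{2}+\frac{1}{4}+it}=\overline{L_{p/q}\bigl(\tfrac{k}{2}+\tfrac{1}{4}+it,f\bigr)},
\]
since $s\mapsto k+\tfrac{1}{2}-s$ collapses to complex conjugation $t\mapsto -t$ along the critical line. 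Plugging this into (\ref{functional equation doyon kim paper}) yields
\[
\Lambda_{p/q}\bigl(\tfrac{k}{2}+\tfrac{1}{4}+it,f\bigr)=C\,\overline{\Lambda_{p/q}\bigl(\tfrac{k}{2}+\tfrac{1}{4}+it,f\bigr)},
\]
for a unimodular constant $C=i^{k+\frac{1}{2}}(-q/p)^{-2k-1}\epsilon_{p}^{2k+1}$. Fixing a square root of $C$ then defines a real-valued Hardy-type function $Z_{p/q}(t):=C^{-1/2}\Lambda_{p/q}(\tfrac{k}{2}+\tfrac{1}{4}+it,f)$ whose sign changes detect the odd-order zeros of $L_{p/q}(s,f)$ on the critical line.

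With $Z_{p/q}(t)$ at hand, I would import the analytic apparatus from the proof of Theorem \ref{theorem 1.2} almost unchanged. The two ingredients are: (i) a mean-square upper bound $\int_{T}^{2T}|Z_{p/q}(t)|^{2}\,dt\ll T\log T$, following from the convexity bound for $L_{p/q}(s,f)$ and the standard Perron-type estimates for the $\Gamma$-factor on the critical line; and (ii) Lekkerkerker's weighted long-interval estimate for an integral of the form $\int_{T}^{T+H}Z_{p/q}(t)\,\varphi(t)\,dt$, derived from the approximate functional equation associated to $L_{p/q}(s,f)$. The reason these estimates transfer effortlessly from the Fricke-pair case $L(s,f)\pm L(s,f|W_{4N})$ to the twisted case is that the additive twist is analytically just a translation of the modular variable: one has $\Lambda_{p/q}(s,f)=(2\pi/q)^{-s}\int_{0}^{\infty}f(p/q+iy)\,y^{s-1}\,dy$, and the $\Gamma_{0}(4N)$-equivalence of $p/q$ with $i\infty$ guarantees the exponential decay of $f(p/q+iy)$ as $y\to 0^{+}$ and as $y\to\infty$ that is needed to shift contours and derive the approximate functional equation.

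The main obstacle, to my eyes, is a careful bookkeeping of the unimodular constant $C$ together with the branch choices of $\epsilon_{p}$ and $(-q/p)$, so that $Z_{p/q}(t)$ is unambiguously real and nontrivial. In particular, one must rule out the degenerate possibility that $\Lambda_{p/q}(s,f)$ vanishes identically on the critical line; this is excluded because the Mellin transform representation above, combined with $f$ being a cusp form with non-zero Fourier expansion, forces $\Lambda_{p/q}(s,f)$ to be a non-trivial entire function of order one. Once these algebraic checks are in place, the rest of the argument is a mechanical transcription of the proof of Theorem \ref{theorem 1.2}, delivering $N_{0,p/q}(T)\gg T$ as claimed.
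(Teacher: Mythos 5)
Your proposal is essentially the paper's own proof: the paper (Remark \ref{remark kim}) likewise uses $p^{2}\equiv 1\bmod q$ and the reality of $a_{f}(n)$ to manufacture a real-valued Hardy-type function $Z_{f,p/q}(t)$ from the self-dual functional equation, writes down the Fourier/Mellin transform linking it to $f\left(\frac{z+p}{q}\right)$ on the upper half-plane, and then observes that Lemmas \ref{wilton bound}--\ref{estimate small z} and the Parseval machinery of Section 6 carry over verbatim because $\Gamma_{0}(4N)$-equivalence of $p/q$ to $i\infty$ supplies the needed decay of $f((z+p)/q)$ near the cusp. Your treatment of the unimodular constant, the collapse $\overline{p}\equiv p\bmod q$, and the non-degeneracy of $\Lambda_{p/q}$ are all exactly the algebraic checks the paper gestures at.

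One caveat: your listed "ingredients" (i) and (ii) misdescribe what the Lekkerkerker/Hardy--Littlewood method actually feeds on here. The method does \emph{not} use a mean-square upper bound $\int_{T}^{2T}|Z_{p/q}(t)|^{2}\,dt\ll T\log T$ derived from convexity, nor an approximate functional equation. What is actually needed is (a) the Parseval upper bound $\int_{-\infty}^{\infty}|I(t)|^{2}\,dt\ll\epsilon^{-k-\frac{1}{2}}H$ for the smoothed short-interval integral $I(t)=\int_{t}^{t+H}Z_{p/q}(u)e^{(\frac{\pi}{2}-\epsilon)u}\,du$, and (b) a pointwise lower bound for $J(t)=\int_{t}^{t+H}|Z_{p/q}(u)|e^{(\frac{\pi}{2}-\epsilon)u}\,du$ together with the mean-square bound of Claim \ref{our claim} on the auxiliary function $\Psi(t)$ built from $\psi_{f}$, which in turn rests on the Wilton/Lekkerkerker approximation of Lemma \ref{wilton bound} and the bounds of Lemma \ref{estimate small z} for the log-weighted series $g_{1}$, not on convexity. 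Since you defer these to a "mechanical transcription" of the proof of Theorem \ref{theorem 1.2}, this imprecision does not introduce a gap, but the two bullet points as written would not suffice on their own to run the argument.
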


\bigskip{}

Our proofs of Theorems \ref{theorem 1.2} and \ref{theorem 1.3} will rely on a variant of the Hardy-Littlewood
method developed by Lekkerkerker \cite{lekkerkerker_thesis}. In his doctoral
dissertation, Lekkerkerker proved general theorems about the distribution
of zeros of Dirichlet series satisfying Hecke's functional equation.
The last chapter of his thesis is devoted to prove a result of Hardy-Littlewood
type for entire Dirichlet series.

Lekkerkerker's results were mostly generalized by Berndt \cite{berndt_zeros_(i),berndt_zeros_(ii)}, where the assumption of Hecke's functional equation was
replaced by an equation with more $\Gamma-$factors. Epstein, Hafner
and Sarnak \cite{maass_forms_zeros,Hafner_odd Maass},  adapted Lekkerkerker's
ideas to prove that the number of critical zeros of $L-$functions attached
to Maass cusp forms also satisfy (\ref{omega statement-1}).
\footnote{Hafner \cite{Hafner_Maass forms} later generalized this result by proving the analogue
of Selberg's celebrated theorem, showing that a positive proportion of the zeros
of this $L-$functions lie on the critical line $\mathbb{\text{Re}}(s)=\frac{1}{2}$.}

It should be remarked that condition 3 on the statement of Theorem
13 of Lekkerkerker's dissertation is incomplete, because it is motivated
by the incorrect lemma 7 on Chapter III of \cite{lekkerkerker_thesis}. In
order to avoid some of the incorrect steps motivated by this imprecision,
we adapt to our case some of Lekkerkerker's arguments in a correct
form and we present them in a way that avoids the use of the incorrect
lemma 7. An alternative approach of our Lemma \ref{estimate small z}, based on Fourier
analysis, is also sketched at Remark \ref{sarnak remark} below and it has the same
spirit as the approach followed in \cite{maass_forms_zeros}.

\bigskip{}

Our paper is organized as follows. In the next section, we present
some basic facts that will be important throughout our exposition.
Next, we present the proof of Theorem \ref{theorem 1.1} by modifying the argument
of de la Vall\'ee Poussin. Section \ref{lemmata Lekkerkerker} is devoted to a careful adaptation
of Lekkerkerker's arguments to our case. We finish the paper with
an exposition of the classical Hardy-Littlewood argument to prove
Theorem \ref{theorem 1.2} and with a remark on how the same ideas can be used to
prove Theorem \ref{theorem 1.3}.

\section{Preliminary results}

Like cusp forms with integral weight, cusp forms of half-integral
weight also satisfy uniform bounds on the upper half-plane {[}\cite{iwaniec_classical_automorphic},
Chapt.5, p.70{]}. We start by quoting a lemma in Shimura's textbook
{[}\cite{shimura_book}, p.31, Lemma 6.2.{]}.

If $f\in S_{k+\frac{1}{2}}(\Gamma_{0}(4N))$, with $N$ being a perfect
square, then $f\left(\frac{z}{\sqrt{4N}}\right)=\sum a_{f}(n)\,e^{2\pi inz/\ell}$
with $\ell\in\mathbb{N}$. Then, by {[}\cite{shimura_book}, p.31, Lemma 6.2.{]},
there exists a positive constant $\mathcal{A}$ such that, for any
$z\in\mathbb{H}$, the uniform bound takes place\footnote{Shimura's argument simply transforms the statement on half-integral
weight cusp forms into the same statement about cusp forms with integral
weight, which is well-known.}
\begin{equation}
\left|f\left(\frac{z}{\sqrt{4N}}\right)\right|\le\frac{\mathcal{A}}{\text{Im}(z)^{\frac{k}{2}+\frac{1}{4}}}.\label{uniform bound for cusp forms}
\end{equation}
Connected with (\ref{uniform bound for cusp forms}) is the mean value
estimate for $a_{f}(m)$,
\begin{equation}
\sum_{m=1}^{M}|a_{f}(m)|^{2}\ll M^{k+\frac{1}{2}},\label{mean value estimate cuuuuusp}
\end{equation}
where, just like $\mathcal{A}$ in (\ref{uniform bound for cusp forms}), the implied constant depends on $f$. The usual Hecke argument
can be developed by taking (\ref{uniform bound for cusp forms}) as
a starting point. This classical procedure yields the bound
\begin{equation}
a_{f}(n)\ll n^{\frac{k}{2}+\frac{1}{4}},\label{Hecke Bound!}
\end{equation}
where, again, the implied constant depends only on $f$. Of course, much better bounds
than (\ref{Hecke Bound!}) are currently available \cite{conrey_iwaniec}.
The uniform bound (\ref{uniform bound for cusp forms}) will play
a crucial role in some estimates necessary for the proof of Theorem
\ref{theorem 1.2}.

Like in the integral case, one may define slash operators acting on
the space $S_{k+\frac{1}{2}}\left(\Gamma_{0}(4N)\right)$. These are
defined as follows {[}\cite{shimura_half}, p.447{]}. Let $\alpha=\left(\begin{array}{cc}
a & b\\
c & d
\end{array}\right)\in\text{GL}^{+}(2,\mathbb{R})$ and attach to it an analytic function on $\mathbb{H}$, $\phi(z)$,
such that
\begin{equation}
\phi(z)^{2}=\tau\,\det(\alpha)^{-\frac{1}{2}}(cz+d),\label{phi (z) defined slash}
\end{equation}
where $\tau$ is a complex number such that $|\tau|=1$. If we let
$\mathcal{G}$ denote the set of all pairs $(\alpha,\phi)$, then
$\left(\mathcal{G},\star\right)$ forms a group, where the operation
$\star$ is defined by
\[
\left(\alpha,\phi\right)\star\left(\beta,\psi\right)=\left(\alpha\beta,\phi(\beta z)\,\psi(z)\right).
\]
Under this setting, if $\xi=(\alpha,\phi(z))\in\mathcal{G}$ and $f\in S_{k+\frac{1}{2}}\left(\Gamma_{0}(4N)\right)$,
the slash operator is defined as
\begin{equation}
\left(f|\xi\right)(z)=f\left(\alpha z\right)\,\phi(z)^{-2k-1}.\label{definition slash operator at intro}
\end{equation}

\bigskip{}

As the functional equations (\ref{functional equation first Cusp})
and (\ref{functional equation doyon kim paper}) already clarify,
in several occasions throughout this paper we shall need to estimate
the asymptotic order of certain integrals involving the Dirichlet
series $L(s,f)$. To justify most of the steps, we will often invoke
the following version of Stirling's formula 
\begin{equation}
\Gamma(\sigma+it)=(2\pi)^{\frac{1}{2}}\,t^{\sigma+it-\frac{1}{2}}\,e^{-\frac{\pi t}{2}-it+\frac{i\pi}{2}(\sigma-\frac{1}{2})}\left(1+\frac{1}{12(\sigma+it)}+O\left(\frac{1}{t^{2}}\right)\right),\label{Stirling exact form on Introduction}
\end{equation}
as $t\rightarrow\infty$, uniformly for $-\infty<\sigma_{1}\leq\sigma\leq\sigma_{2}<\infty$.
A similar formula can be written for $t<0$ as $t$ tends to $-\infty$
by using the fact that $\Gamma(\overline{s})=\overline{\Gamma(s)}$.
Of course, a direct consequence of this exact version is
\begin{equation}
|\Gamma(\sigma+it)|=(2\pi)^{\frac{1}{2}}\,|t|^{\sigma-\frac{1}{2}}\,e^{-\frac{\pi}{2}|t|}\left(1+O\left(\frac{1}{|t|}\right)\right),\,\,\,\,\,|t|\rightarrow\infty.\label{preliminary stirling}
\end{equation}

Before proceeding further, let us briefly mention the usual reasoning
to get convex estimates for $L(\sigma+it,f)$. The estimate that we
will be using in this paper can be obtained through the familiar argument
invoking the classical Phragm\'en-Lindel\"of theorem given in {[}\cite{titchmarsh_theory_of_functions},
p.180, 5.65{]}. By Hecke's bound (\ref{Hecke Bound!}), we know that
when $\sigma>\frac{k}{2}+\frac{5}{4}$, $L\left(\sigma+it,f|W_{4N}\right)=O(1)$.
Hence, from the functional equation (\ref{functional equation first Cusp})
and Stirling's formula (\ref{Stirling exact form on Introduction})
we have, whenever $\sigma<\frac{k}{2}-\frac{3}{4}$,
\[
|L\left(\sigma+it,f\right)|\ll\left|\frac{\Gamma\left(k+\frac{1}{2}-s\right)}{\Gamma(s)}L\left(k+\frac{1}{2}-s,f|W_{4N}\right)\right|\ll|t|^{k+\frac{1}{2}-2\sigma}.
\]
Thus, by the Phragm\'en-Lindel\"of principle, we have the convex estimate for $L(s,f)$,
\begin{equation}
L\left(\sigma+it,f\right)\ll_{\epsilon}|t|^{\frac{k}{2}+\frac{5}{4}-\sigma+\epsilon},\,\,\,\,\,\,\,\,\,\,\,\frac{k}{2}-\frac{3}{4}-\epsilon<\sigma<\frac{k}{2}+\frac{5}{4}+\epsilon.\label{estimate convex for cusp form L function}
\end{equation}

\section{Lemmas for the proof of Theorem \ref{theorem 1.1}}

Since our first theorem concerns zeros of a combination of the form
$L(s,f)\pm L(s,f|W_{4N})$,
we will define two important functions related to this expression as 
\begin{equation}
R_{f}\left(t\right):=\frac{\Lambda\left(\frac{k}{2}+\frac{1}{4}+it,f\right)+\Lambda\left(\frac{k}{2}+\frac{1}{4}+it,f|W_{4N}\right)}{2},\label{Rf(t) with Fricke}
\end{equation}
and
\begin{equation}
I_{f}(t):=\frac{\Lambda\left(\frac{k}{2}+\frac{1}{4}+it,f\right)-\Lambda\left(\frac{k}{2}+\frac{1}{4}+it,f|W_{4N}\right)}{2i}.\label{deifnition If(t)}
\end{equation}
It follows from the functional equation for $\Lambda(f,s)$ (\ref{Functional equation in terms of Lambda})
that $R_{f}(t)$ can be written as
\begin{equation}
R_{f}(t)=\frac{\Lambda\left(\frac{k}{2}+\frac{1}{4}+it,f\right)+\Lambda\left(\frac{k}{2}+\frac{1}{4}-it,f\right)}{2},\label{Definition Rf(t)}
\end{equation}
which means that $R_{f}(-t)=R_{f}(t)$. In particular, if the coefficients
of $f(z)$, $a_{f}(n)$, are real numbers, then
\begin{equation}
R_{f}(t)=\frac{\Lambda\left(\frac{k}{2}+\frac{1}{4}+it,f\right)+\overline{\Lambda\left(\frac{k}{2}+\frac{1}{4}+it,f\right)}}{2}=\text{Re}\left(\Lambda\left(\frac{k}{2}+\frac{1}{4}+it,f\right)\right).\label{Rf(t) as real part of something}
\end{equation}

Hence, when we assume that $a_{f}(n)$ are real numbers, $R_{f}(t)$ represents a real-valued and even function
of $t$. Analogously, the use of the functional equation for $L(s,f)$
allows to write the representation for $I_{f}(t)$
\begin{equation}
I_{f}(t):=\frac{\Lambda\left(\frac{k}{2}+\frac{1}{4}+it,f\right)-\Lambda\left(\frac{k}{2}+\frac{1}{4}-it,f\right)}{2i},\label{as odd function If(t)}
\end{equation}
which means that $I_{f}(-t)=I_{f}(t)$. In particular, if the coefficients
of $f(z)$, $a_{f}(n)$, are real numbers, then
\begin{equation}
I_{f}(t)=\frac{\Lambda\left(\frac{k}{2}+\frac{1}{4}+it,f\right)-\overline{\Lambda\left(\frac{k}{2}+\frac{1}{4}+it,f\right)}}{2i}=\text{Im}\left(\Lambda\left(\frac{k}{2}+\frac{1}{4}+it,f\right)\right).\label{If(t) as imaginary pppppaaaarrrttt}
\end{equation}

At last, we note that if the Fourier coefficients of $f(z)$, $a_{f}(n)$,
are, instead, purely imaginary numbers, then the roles of $R_{f}(t)$
and $I_{f}(t)$ are somewhat reversed, this is
\begin{align*}
R_{f}(t) & =\frac{\Lambda\left(\frac{k}{2}+\frac{1}{4}+it,f\right)+\Lambda\left(\frac{k}{2}+\frac{1}{4}-it,f\right)}{2}=\frac{\Lambda\left(\frac{k}{2}+\frac{1}{4}+it,f\right)-\overline{\Lambda\left(\frac{k}{2}+\frac{1}{4}+it,f\right)}}{2}\\
 & =i\,\text{Im}\left(\Lambda\left(\frac{k}{2}+\frac{1}{4}+it,f\right)\right),
\end{align*}
while
\begin{align*}
I_{f}(t) & =\frac{\Lambda\left(\frac{k}{2}+\frac{1}{4}+it,f\right)-\Lambda\left(\frac{k}{2}+\frac{1}{4}-it,f\right)}{2i}=\frac{\Lambda\left(\frac{k}{2}+\frac{1}{4}+it,f\right)+\overline{\Lambda\left(\frac{k}{2}+\frac{1}{4}+it,f\right)}}{2i}\\
 & =-i\,\text{Re}\left(\Lambda\left(\frac{k}{2}+\frac{1}{4}+it,f\right)\right).
\end{align*}
Therefore, the difficulty in dealing with the case where $a_{f}(n)\in i\mathbb{R}$
is exactly the same as dealing with the assumption $a_{f}(n)\in\mathbb{R}$
and so, throughout this section and the next, we will only give the
details for the case $a_{f}(n)\in\mathbb{R}$.

\begin{lemma} \label{Lemma Representation}
Let $N$ be a perfect square and $f(z)=\sum_{n=1}^{\infty}a_{f}(n)\,e^{2\pi inz}\in S_{k+\frac{1}{2}}\left(\Gamma_{0}(4N)\right)$.
If the Fourier coefficients of $f(z)$, $a_{f}(n)$, are real numbers,
then the following integral representations take place 
\begin{equation}
\intop_{0}^{\infty}R_{f}(t)\,\cosh\left(\left(\frac{\pi}{2}-u\right)t\right)\,dt=\frac{\pi}{2}i^{\frac{k}{2}+\frac{1}{4}}e^{-i\left(\frac{k}{2}+\frac{1}{4}\right)u}f\left(-\frac{e^{-iu}}{\sqrt{4N}}\right)+\frac{\pi}{2}i^{-\frac{k}{2}-\frac{1}{4}}e^{i\left(\frac{k}{2}+\frac{1}{4}\right)u}\,f\left(\frac{e^{iu}}{\sqrt{4N}}\right),\label{formula for cosh times Rf(t)}
\end{equation}
\begin{equation}
i\,\intop_{0}^{\infty}I_{f}(t)\,\sinh\left(\left(\frac{\pi}{2}-u\right)t\right)\,dt=\frac{\pi}{2}i^{\frac{k}{2}+\frac{1}{4}}\,e^{-i\left(\frac{k}{2}+\frac{1}{4}\right)u}f\left(-\frac{e^{-iu}}{\sqrt{4N}}\right)-\frac{\pi}{2}i^{-\frac{k}{2}-\frac{1}{4}}e^{i\left(\frac{k}{2}+\frac{1}{4}\right)u}\,f\left(\frac{e^{iu}}{\sqrt{4N}}\right).\label{Formula for If(t) integral representation sinh}
\end{equation}
\end{lemma}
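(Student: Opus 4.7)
My plan is to derive both identities via Mellin inversion applied to Hecke's integral representation for $\Lambda(s,f)$, followed by analytic continuation of $f$ into the upper half-plane. Starting from the Fourier expansion (\ref{Fourier expansion of cusp intro}) and the definition (\ref{big lambda definition}), the standard Gamma-function trick gives
\[
\Lambda(s,f)=\int_{0}^{\infty}f\!\left(\tfrac{iy}{\sqrt{4N}}\right)y^{s-1}\,dy,
\]
valid for every $s\in\mathbb{C}$, since $f$ decays exponentially at $y\to\infty$ (cuspidality) and, by the functional equation (\ref{functional equation first Cusp}) after the substitution $y\mapsto 1/(4Ny)$, also at $y\to 0$. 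Mellin inversion on the critical line then gives, for $x>0$,
\[
f\!\left(\tfrac{ix}{\sqrt{4N}}\right)=\frac{1}{2\pi}\int_{-\infty}^{\infty}\Lambda\!\left(\tfrac{k}{2}+\tfrac{1}{4}+it,f\right)x^{-\frac{k}{2}-\frac{1}{4}-it}\,dt.
\]

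The crucial step is extending this identity to complex $x=e^{i\theta}$ with $|\theta|<\pi/2$. The left-hand side is holomorphic in $x$ throughout the open right half $x$-plane since $ix/\sqrt{4N}\in\mathbb{H}$ there. On the right, the factor $x^{-it}=e^{t\theta}$ introduces an exponential weight, but Stirling's formula (\ref{preliminary stirling}) combined with the convex bound (\ref{estimate convex for cusp form L function}) gives $|\Lambda(\tfrac{k}{2}+\tfrac{1}{4}+it,f)|\ll |t|^{A}e^{-\pi|t|/2}$ for some constant $A$, so the integrand decays like $|t|^{A}e^{-(\pi/2-|\theta|)|t|}$ and the right-hand side also defines a holomorphic function of $x$ in the strip; uniqueness of analytic continuation forces equality. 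Specializing $\theta=\pi/2-u$ and $\theta=-(\pi/2-u)$, both of which lie in $(-\pi/2,\pi/2)$ for $0<u<\pi$, and using $i=e^{i\pi/2}$ to identify $ie^{i\theta}\in\{-e^{-iu},e^{iu}\}$, produces the pair
\[
\int_{-\infty}^{\infty}\Lambda\!\left(\tfrac{k}{2}+\tfrac{1}{4}+it,f\right)e^{\pm(\pi/2-u)t}\,dt=2\pi\,i^{\pm(k/2+1/4)}e^{\mp i(k/2+1/4)u}f\!\left(\mp\tfrac{e^{\mp iu}}{\sqrt{4N}}\right).
\]

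Half the sum of these two identities evaluates $\int_{-\infty}^{\infty}\Lambda(\tfrac{k}{2}+\tfrac{1}{4}+it,f)\cosh((\pi/2-u)t)\,dt$, which, since $R_f$ is even in $t$ by (\ref{Definition Rf(t)}), equals $2\int_{0}^{\infty}R_f(t)\cosh((\pi/2-u)t)\,dt$; this yields (\ref{formula for cosh times Rf(t)}). Half the difference, after multiplication by $i$ and use of the opposite parity of $I_f$ in $t$, yields (\ref{Formula for If(t) integral representation sinh}) by the same mechanism. The main obstacle is the analytic-continuation step: Stirling's decay $e^{-\pi|t|/2}$ must just barely offset the growth $e^{t\theta}$ throughout the open strip $|\theta|<\pi/2$, a tightness specific to the critical line $\operatorname{Re}(s)=k/2+1/4$ where the Gamma factor attains its sharpest rate of decay. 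Once this is in place, the rest is bookkeeping with two complex substitutions, parity under $t\mapsto -t$, and the identification $i=e^{i\pi/2}$.
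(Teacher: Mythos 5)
Your argument is correct and lands exactly where the paper's does, but the mechanism is organized differently. The paper evaluates $\mathcal{J}(\omega):=\int_{-\infty}^{\infty}R_f(t)\,e^{\omega t}\,dt$ directly: after splitting via the definition of $R_f$, it shifts the contour from $\operatorname{Re}(z)=\tfrac{k}{2}+\tfrac14$ to the region of absolute convergence $\operatorname{Re}(z)=\tfrac{k+3}{2}$ (justified by Stirling and the convexity bound), interchanges sum and integral over the Dirichlet series, and then applies the Cahen--Mellin integral $e^{-w}=\frac{1}{2\pi i}\int_{(c)}\Gamma(s)w^{-s}\,ds$ term by term with a complex argument $w=2\pi n e^{i\omega}/\sqrt{4N}$, which is admissible because $\operatorname{Re}(e^{i\omega})>0$. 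You instead package the same information as Mellin inversion of Hecke's integral $\Lambda(s,f)=\int_0^\infty f(iy/\sqrt{4N})\,y^{s-1}\,dy$, obtain the identity for positive real $x$, and then analytically continue in $x$ into the sector $|\arg x|<\pi/2$ before specializing to $x=e^{\pm i(\pi/2-u)}$. The two routes rely on precisely the same input --- the Stirling decay $|\Lambda(\tfrac{k}{2}+\tfrac14+it,f)|\ll|t|^{A}e^{-\pi|t|/2}$, which is tight on the critical line, and the identification of the inverse transform with $f$ evaluated at a rotated argument --- but yours swaps the explicit contour shift and term-by-term Cahen--Mellin for a single analytic-continuation step, which is cleaner when written out and arguably more illuminating, since it makes transparent that the result is forced by holomorphy in the rotation parameter. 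One small wording issue: you call the domain of continuation a ``strip'' when you mean the angular sector $\{x:|\arg x|<\pi/2\}$; since your identity is established on $(0,\infty)$ and both sides are holomorphic on that sector (the right side by the uniform-on-compacta convergence your decay estimate provides), the uniqueness argument is correct as intended.
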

\begin{proof}
For any $-\frac{\pi}{2}<\omega<\frac{\pi}{2}$, we evaluate the integral
\begin{equation}
\mathcal{J}(\omega):=\intop_{-\infty}^{\infty}R_{f}(t)\,e^{\omega t}dt,\label{integral to evaluate indeed}
\end{equation}
by using the Cahen-Mellin formula.
Since $|\omega|<\frac{\pi}{2}$, it follows from Stirling's formula
(\ref{preliminary stirling}) and the Phragm\'en-Lindel\"of principle
(\ref{estimate convex for cusp form L function}) that $|R_{f}(t)|e^{\omega t}\in L_{1}(\mathbb{R})$.
By (\ref{Definition Rf(t)}), we may write $\mathcal{J}(\omega)$
as an integral over the vertical line $\text{Re}(z)=\frac{k}{2}+\frac{1}{4}$,
i.e.,
\begin{align}
\mathcal{J}(\omega) & =\intop_{-\infty}^{\infty}\frac{\Lambda\left(\frac{k}{2}+\frac{1}{4}+it,f\right)+\Lambda\left(\frac{k}{2}+\frac{1}{4}-it,f\right)}{2}\,e^{\omega t}dt\nonumber \\
 & =\frac{1}{2}\intop_{-\infty}^{\infty}\Lambda\left(\frac{k}{2}+\frac{1}{4}+it,f\right)e^{\omega t}\,dt+\frac{1}{2}\intop_{-\infty}^{\infty}\Lambda\left(\frac{k}{2}+\frac{1}{4}+it,f\right)\,e^{-\omega t}dt\nonumber \\
 & =\frac{e^{i\omega\left(\frac{k}{2}+\frac{1}{4}\right)}}{2i}\,\intop_{\frac{k}{2}+\frac{1}{4}-i\infty}^{\frac{k}{2}+\frac{1}{4}+i\infty}\Lambda\left(z,f\right)\,e^{-i\omega z}\,dz+\frac{e^{-i\omega\left(\frac{k}{2}+\frac{1}{4}\right)}}{2i}\,\intop_{\frac{k}{2}+\frac{1}{4}-i\infty}^{\frac{k}{2}+\frac{1}{4}+i\infty}\Lambda\left(z,f\right)\,e^{i\omega z}\,dz.\label{spliting of J(omega)}
\end{align}
We will now evaluate the first integral, as the second can be analogously
computed by replacing $\omega$ by $-\omega$. We shift the line of
integration to $\text{Re}(z)=\frac{k+3}{2}\pm iT$: to do this we
just need to integrate integrating along a positively oriented rectangular
contour $\mathcal{R}(T)$ with vertices $\frac{k}{2}+\frac{1}{4}\pm iT$
and $\frac{k+3}{2}\pm iT$, $T>0$. Since $\Gamma\left(z\right)$
and $L(z,f)$ are analytic inside $\mathcal{R}(T)$, an application
of Cauchy's theorem gives
\begin{equation}
\left\{ \intop_{\frac{k}{2}+\frac{1}{4}-iT}^{\frac{k}{2}+\frac{1}{4}+iT}+\intop_{\frac{k}{2}+\frac{1}{4}+iT}^{\frac{k+3}{2}+iT}+\intop_{\frac{k+3}{2}-iT}^{\frac{k}{2}+\frac{1}{4}-iT}+\intop_{\frac{k+3}{2}+iT}^{\frac{k+3}{2}-iT}\right\} \,\Lambda\left(z,f\right)\,e^{-i\omega z}\,dz=0.\label{application of Cauchy}
\end{equation}
Since $|\omega|<\frac{\pi}{2}$, using Stirling's formula and the
convex estimates (\ref{estimate convex for cusp form L function}),
we see that the integrals along the horizontal segments $\left[\frac{k}{2}+\frac{1}{4}\pm iT,\frac{k+3}{2}\pm iT\right]$
can be bounded as follows
\[
\left|\intop_{\frac{k}{2}+\frac{1}{4}+iT}^{\frac{k+3}{2}+iT}\Lambda\left(z,f\right)\,e^{-i\omega z}\,dz\right|\ll_{\epsilon}T^{\frac{k}{2}+\frac{3}{4}+\epsilon}e^{-\left(\frac{\pi}{2}-\omega\right)T}
\]
and so they tend to zero as $T\rightarrow\infty$. Taking $T\rightarrow\infty$
in (\ref{application of Cauchy}) and using the fact that the Dirichlet
series (\ref{Dirichlet L series cusp form at intro first}) converges
absolutely when $\text{Re}(s)=\frac{k+3}{2}$, we deduce that 
\begin{align*}
\intop_{\frac{k}{2}+\frac{1}{4}-i\infty}^{\frac{k}{2}+\frac{1}{4}+i\infty}\Lambda\left(z,f\right)\,e^{-i\omega z}\,dz & =\intop_{\frac{k+3}{2}-i\infty}^{\frac{k+3}{2}+i\infty}\left(\frac{2\pi}{\sqrt{4N}}\right)^{-z}\Gamma(z)\,L(z,f)\,e^{-i\omega z}dz=2\pi i\sum_{n=1}^{\infty}\frac{a_{f}(n)}{2\pi i}\intop_{\frac{k+3}{2}-i\infty}^{\frac{k+3}{2}+i\infty}\Gamma(z)\,\left(\frac{2\pi ne^{i\omega}}{\sqrt{4N}}\right)^{-z}dz\\
 & =2\pi i\,\sum_{n=1}^{\infty}a_{f}(n)\,\exp\left(-\frac{2\pi ne^{i\omega}}{\sqrt{4N}}\right)=2\pi i\,f\left(\frac{ie^{i\omega}}{\sqrt{4N}}\right).
\end{align*}
Note that, on the third equality, we have invoked the Cahen-Mellin integral,
\begin{equation}
e^{-z}=\frac{1}{2\pi i}\,\intop_{c-i\infty}^{c+i\infty}\Gamma(s)\,z^{-s}ds,\,\,\,\,\,c>0,\,\,\,\text{Re}(z)>0,\label{Cahen Mellin integral for applications}
\end{equation}
which can be applied because $\text{Re}(e^{i\omega})=\cos(\omega)>0$
when $|\omega|<\frac{\pi}{2}$. Thus, returning to (\ref{spliting of J(omega)})
we find that
\begin{equation}
\mathcal{J}(\omega)=\pi e^{i\omega\left(\frac{k}{2}+\frac{1}{4}\right)}\,f\left(\frac{ie^{i\omega}}{\sqrt{4N}}\right)+\pi e^{-i\omega\left(\frac{k}{2}+\frac{1}{4}\right)}\,f\left(\frac{ie^{-i\omega}}{\sqrt{4N}}\right).\label{formula for J(omega) omega}
\end{equation}
This implies immediately (\ref{formula for cosh times Rf(t)}), after
we substitute $\omega$ by $\frac{\pi}{2}-u$ and use the fact that
$R_{f}(t)=R_{f}(-t)$. The proof of the second formula (\ref{Formula for If(t) integral representation sinh})
uses the same computations but uses instead the fact that $I_{f}(t)=-I_{f}(-t)$.
\end{proof}

\begin{remark}
We can adapt the computations given in the evaluation of (\ref{integral to evaluate indeed})
to give an even more general expression. In fact, for any $z\in\mathbb{H}$, the following integral representation is valid
\begin{equation}
\intop_{-\infty}^{\infty}R_{f}(t)\,(-iz)^{-it}dt=\pi i^{-\frac{k}{2}-\frac{1}{4}}z^{\frac{k}{2}+\frac{1}{4}}\,\left\{ f\left(\frac{z}{\sqrt{4N}}\right)+\left(f|W_{4N}\right)\left(\frac{z}{\sqrt{4N}}\right)\right\}. \label{as fourier transform almost}
\end{equation}
\end{remark}

In the previous lemma, we have connected the functions $R_{f}(t)$
and $I_{f}(t)$ (whose zeros we pretend to study) with an evaluation
of the cusp form $f\left(-e^{-iu}/\sqrt{4N}\right)$, for $0<u<\frac{\pi}{2}$.
The next lemma uses the slash operator (\ref{definition slash operator at intro})
in order to establish a uniform bound for any derivative of $f\left(-e^{-iu}/\sqrt{4N}\right)$
with respect to $u$.

\begin{lemma}\label{lemma bound theta}
There exist four positive constants $A$, $B$, $C$ and $D$ (only
depending on the weight and level of $f$) such that, for any $0<u<\frac{\pi}{2}$
and $p\in\mathbb{N}_{0}$,
\begin{equation}
\left|\frac{d^{p}}{du^{p}}f\left(-\frac{e^{-iu}}{\sqrt{4N}}\right)\right|<C\,\frac{2^{p}\,p!}{u^{p+k+\frac{1}{2}}}e^{-\frac{A}{u}}\label{bound derivative half cusp}
\end{equation}
and
\begin{equation}
\left|\frac{d^{p}}{du^{p}}f\left(\frac{e^{iu}}{\sqrt{4N}}\right)\right|<D\,\frac{2^{p}\,p!}{u^{p+k+\frac{1}{2}}}e^{-\frac{B}{u}}.\label{second inequality on Lemma!}
\end{equation}

\end{lemma}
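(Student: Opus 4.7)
The plan is to establish \eqref{bound derivative half cusp} through two ingredients: Cauchy's integral formula, which reduces the $p$-th derivative estimate to a pointwise supremum of $g(w) := f(-e^{-iw}/\sqrt{4N})$ on a disk; and a pointwise bound for $f$ near the rational cusp $-1/\sqrt{4N}$, obtained via a slash operator in $\mathcal{G}$. The companion bound \eqref{second inequality on Lemma!} follows by the same method applied to the cusp $+1/\sqrt{4N}$, so I concentrate on \eqref{bound derivative half cusp}.

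For the Cauchy step, observe that $\text{Im}(-e^{-iw}) = e^{\text{Im}(w)}\sin(\text{Re}(w)) > 0$ whenever $\text{Re}(w) \in (0,\pi)$; hence $g$ is holomorphic on the disk $|w - u| \leq u/2$ for any $u \in (0, \pi/2)$, since on that disk $\text{Re}(w) \in [u/2, 3u/2] \subset (0, 3\pi/4)$. Cauchy's formula on the circle $|w - u| = u/2$ immediately gives
$$
\left|g^{(p)}(u)\right| \leq \frac{2^p\,p!}{u^p}\,\max_{|w-u|=u/2}|g(w)|,
$$
so \eqref{bound derivative half cusp} reduces to the uniform pointwise bound $|g(w)| \leq C\,u^{-(k+1/2)} e^{-A/u}$ on this circle.

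For the pointwise bound, the main idea is to move the point close to the cusp $-1/(2M)$ (where $2M = \sqrt{4N} \in \mathbb{N}$ since $N$ is a perfect square) over to the cusp at infinity, where the Fourier expansion furnishes exponential decay. Concretely, pick $\alpha = \begin{pmatrix} -1 & 0 \\ 2M & -1 \end{pmatrix}\in SL_2(\mathbb{Z})$, which sends $\infty \mapsto -1/(2M)$, and form $\xi=(\alpha,\phi)\in\mathcal{G}$ with $\phi(z)^2 = \tau(2Mz - 1)$. The identity \eqref{definition slash operator at intro} rearranges to $f(\alpha z) = (f|\xi)(z)\,\phi(z)^{2k+1}$. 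Because $\alpha\in SL_2(\mathbb{Z})$, the function $f|\xi$ is a cusp form of weight $k+\tfrac12$ on the conjugate congruence subgroup $\alpha^{-1}\Gamma_0(4N)\alpha$ with a suitable multiplier system, so it admits a Fourier expansion at $\infty$ of the shape $\sum_{n\geq 1} c_n e^{2\pi inz/h^{*}}$. Polynomial growth of $c_n$ (derivable from \eqref{uniform bound for cusp forms} applied to $f|\xi$) yields the exponential decay $|(f|\xi)(z)| \ll e^{-c_1 \text{Im}(z)}$ for $\text{Im}(z) \geq 1$. For $z_w := -e^{-iw}/(2M)$ and $z := \alpha^{-1}(z_w)$, a direct computation gives $2Mz - 1 = -(2Mz_w + 1)^{-1}$ with $|2Mz_w + 1| = |1 - e^{-iw}| \asymp u$ uniformly on the Cauchy circle. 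Consequently $|\phi(z)|^{2k+1} \asymp u^{-(k+1/2)}$ and $\text{Im}(z) \asymp 1/u$, which combine to give the desired $|g(w)| \leq C\,u^{-(k+1/2)} e^{-A/u}$.

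The main obstacle is the careful handling of the half-integral-weight slash: while for integral weight the assertion that $f|\alpha$ is a cusp form for the conjugate congruence subgroup is classical, in the half-integral setting one must track the cocycle $\phi$ and the induced multiplier system, especially since $-1/(2M)$ is inequivalent to $\infty$ in $\Gamma_0(4N)$. Once $f|\xi$ is identified as a genuine cusp form on the conjugate group, the Fourier-series argument above is routine. The bound \eqref{second inequality on Lemma!} is proved identically, using a matrix sending $\infty$ to $1/(2M)$; alternatively, one may use the Fricke involution via $(f|W_{4N})\left(e^{iu}/\sqrt{4N}\right)$ to reduce it to the case just treated.
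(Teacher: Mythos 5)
Your proposal follows the paper's proof essentially step for step: Cauchy's formula on a circle of radius $u/2$ (the paper keeps a free parameter $\lambda$ and later sets $\lambda=\tfrac12$), the same matrix $\gamma=\begin{pmatrix}-1&0\\\sqrt{4N}&-1\end{pmatrix}\in SL_2(\mathbb{Z})$ (available because $N$ is a perfect square), the same change of variables $z=-1/\bigl(\sqrt{4N}(e^{iw}-1)\bigr)$, and the same mechanism of reading off exponential decay from the Fourier expansion of $f|\gamma$ at $\infty$. Your identities $|2Mz_w+1|=|1-e^{-iw}|\asymp u$ and $\mathrm{Im}(z)\asymp 1/u$ are exactly what the paper establishes (via Jordan's inequality and the $\sinh$ bound) in \eqref{inequality first for cosh}–\eqref{lower bound for sin()/}, and the concern you flag about the multiplier system is handled in the paper by the same appeal to the slash calculus and Shimura's framework that you invoke. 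The approach is the same; the paper just makes the $\asymp$ constants explicit.
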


\begin{proof}
Let $0<u_{0}<\frac{\pi}{2}$. The purpose of our proof is to bound
the absolute value of $\left[\frac{d^{p}}{du^{p}}f\left(-\frac{e^{-iu}}{\sqrt{4N}}\right)\right]_{u=u_{0}}$.
The derivative of the function $f\left(-\frac{e^{-iu}}{\sqrt{4N}}\right)$
at the point $u_{0}$ will be given by integrating along a positively oriented circle
with center $u_{0}$ and having radius $\lambda u_{0}$, $0<\lambda<1$.
Let us denote this circle by $C_{\lambda u_{0}}(u_{0})$: then for
any $w\in D_{\lambda u_{0}}(u_{0}):=\text{int}(C_{\lambda u_{0}}(u_{0}))$,
one can easily check that $\text{Im}\left(-\frac{e^{-iw}}{\sqrt{4N}}\right)>0$.
Hence, $f\left(-\frac{e^{-iw}}{\sqrt{4N}}\right)$ must be analytic
inside the circle $C_{\lambda u_{0}}(u_{0})$ and so, by Cauchy's
formula,
\begin{equation}
\left[\frac{d^{p}}{du^{p}}f\left(-\frac{e^{-iu}}{\sqrt{4N}}\right)\right]_{u=u_{0}}=\frac{p!}{2\pi i}\,\intop_{C_{\lambda u_{0}}(u_{0})}\,\frac{f\left(-\frac{e^{-iw}}{\sqrt{4N}}\right)}{(w-u_{0})^{p+1}}\,dw.\label{Cauchy's formula at beginning!}
\end{equation}
Our next task will be to bound the integral on the right-hand side of (\ref{Cauchy's formula at beginning!}). This will be done
by considering the matrix
\begin{equation}
\gamma=\left(\begin{array}{cc}
-1 & 0\\
\sqrt{4N} & -1
\end{array}\right)\in\text{SL}(2,\mathbb{Z}),\label{alpha slash matrix}
\end{equation}
and using it to construct the slash operator (\ref{definition slash operator at intro})
\begin{equation}
g(z):=\left(f|\gamma\right)(z)=\tau\,\left(\sqrt{4N}z-1\right)^{-k-\frac{1}{2}}f\left(\frac{z}{1-\sqrt{4N}z}\right),\label{slash operator on derivative!}
\end{equation}
where $\tau$ is a complex number such that $|\tau|=1$. If
we apply the construction (\ref{slash operator on derivative!}) with 
\[
z:=-\frac{1}{\sqrt{4N}\left(e^{iw}-1\right)}\in\mathbb{H},
\]
then we see that
\begin{equation}
f\left(-\frac{e^{-iw}}{\sqrt{4N}}\right)=\frac{1}{\tau}\left(-\frac{e^{iw}}{e^{iw}-1}\right)^{k+\frac{1}{2}}g\left(-\frac{1}{\sqrt{4N}\left(e^{iw}-1\right)}\right).\label{after taking slash}
\end{equation}

Since $f(z)$ is a cusp form and, by hypothesis, $N$ is a perfect
square, $g(z)$ admits the Fourier expansion 
\begin{equation}
g(z):=\sum_{n=1}^{\infty}b(n)\,e^{\frac{2\pi in}{r}z},\,\,\,\,\text{for some }r\in\mathbb{N},\label{construction of slash}
\end{equation}
with $b(n)=O\left(n^{\alpha}\right)$ for some large $\alpha>0$.
Using (\ref{after taking slash}) and (\ref{construction of slash})
we obtain 
\begin{align}
\left|f\left(-\frac{e^{-iw}}{\sqrt{4N}}\right)\right| & \leq\left(\frac{|e^{iw}|}{|e^{iw}-1|}\right)^{k+\frac{1}{2}}\sum_{n=1}^{\infty}|b(n)|\,\exp\left(\frac{\pi n}{r\sqrt{N}}\,\text{Im}\left(\frac{1}{e^{iw}-1}\right)\right)\nonumber \\
 & =\left(\frac{e^{-\text{Im}(w)/2}}{2\sqrt{\sin^{2}\left(\frac{\text{Re}(w)}{2}\right)+\sinh^{2}\left(\frac{\text{Im}(w)}{2}\right)}}\right)^{k+\frac{1}{2}}\sum_{n=1}^{\infty}|b(n)|\,\exp\left(-\frac{\pi n}{2r\sqrt{N}}\,\text{Re}\left(\frac{1}{\tan\left(\frac{w}{2}\right)}\right)\right)\nonumber \\
 & \leq\left(\frac{e^{\frac{\lambda u_{0}}{2}}}{2\sin\left(\frac{\text{Re}(w)}{2}\right)}\right)^{k+\frac{1}{2}}\sum_{n=1}^{\infty}|b(n)|\,\exp\left(-\frac{\pi n}{2r\sqrt{N}}\,\frac{\sin\left(\text{Re}(w)\right)}{\cosh\left(\text{Im}(w)\right)-\cos\left(\text{Re}(w)\right)}\right),\label{inequality without approximations}
\end{align}
where in the last step we just have used the fact that, for any $w\in C_{\lambda u_{0}}(u_{0})$,
$|\text{Im}(w)|\leq\lambda u_{0}$. Using now the elementary Jordan
inequality $\sin(x)>\frac{2x}{\pi}$, $0<x<\frac{\pi}{2}$, and the
fact that $\frac{1}{2}(1-\lambda)u_{0}\leq\frac{1}{2}\text{Re}(w)\leq\frac{1}{2}(1+\lambda)u_{0}<\frac{\pi}{2}$,
we obtain
\begin{align}
\left|f\left(-\frac{e^{-iw}}{\sqrt{4N}}\right)\right| & <\left(\frac{\pi e^{\frac{\lambda u_{0}}{2}}}{2\text{Re}(w)}\right)^{k+\frac{1}{2}}\sum_{n=1}^{\infty}|b(n)|\,\exp\left(-\frac{\pi n}{2r\sqrt{N}}\,\frac{\sin\left(\text{Re}(w)\right)}{\cosh\left(\text{Im}(w)\right)-\cos\left(\text{Re}(w)\right)}\right)\nonumber \\
 & \leq\left(\frac{\pi e^{\frac{\lambda u_{0}}{2}}}{2\left(1-\lambda\right)u_{0}}\right)^{k+\frac{1}{2}}\sum_{n=1}^{\infty}|b(n)|\,\exp\left(-\frac{\pi n}{2r\sqrt{N}}\,\frac{\sin\left(\text{Re}(w)\right)}{\cosh\left(\text{Im}(w)\right)-\cos\left(\text{Re}(w)\right)}\right).\label{after Jordan again}
\end{align}
Our proof shall be concluded once we bound the terms of the exponential function in (\ref{after Jordan again}). First, let us note that 
\begin{align}
\cosh(\text{Im}(w))-\cos(\text{Re}(w)) & =\intop_{0}^{\text{Im}(w)}\sinh(t)\,dt+\intop_{0}^{\text{Re}(w)}\sin(t)\,dt\leq\intop_{0}^{\text{Im}(w)}t\,e^{\frac{t^{2}}{6}}dt+\intop_{0}^{\text{Re}(w)}t\,dt\nonumber \\
 & \leq\frac{\text{Im}(w)^{2}}{2}\,e^{\frac{\text{Im}(w)^{2}}{6}}+\frac{\text{Re}(w)^{2}}{2}<e^{\frac{\lambda^{2}}{6}u_{0}^{2}}\left(1+\lambda\right)^{2}u_{0}^{2},\label{inequality first for cosh}
\end{align}
where we have used the fact that $(1-\lambda)u_{0}\leq\text{Re}(w)\leq(1+\lambda)u_{0}$,
$-\lambda u_{0}\leq\text{Im}(w)\leq\lambda u_{0}$ and the well-known
inequality $\sinh(x)\leq x\,e^{x^{2}/6},\,\,x>0$.  But since $0<(1-\lambda)u_{0}\leq\text{Re}(w)\leq(1+\lambda)u_{0}<\pi$, another application of Jordan's inequality gives
\begin{equation}
\sin(\text{Re}(w))>\begin{cases}
\frac{2}{\pi}\text{Re}(w), & 0<\text{Re}(w)<\frac{\pi}{2}\\
2-\frac{2}{\pi}\text{Re}(w), & \frac{\pi}{2}\leq\text{Re}(w)<\pi
\end{cases}\geq\frac{2}{\pi}\left(1-\lambda\right)u_{0}.\label{extended jordan inequality}
\end{equation}
Hence, the combination of (\ref{inequality first for cosh}) and (\ref{extended jordan inequality})
yields the lower bound
\begin{equation}
\frac{\sin\left(\text{Re}(w)\right)}{\cosh\left(\text{Im}(w)\right)-\cos\left(\text{Re}(w)\right)}>\frac{2(1-\lambda)}{\pi e^{\frac{\lambda^{2}}{6}u_{0}^{2}}\left(1+\lambda\right)^{2}u_{0}}>\frac{2(1-\lambda)e^{-\frac{\pi^{2}\lambda^{2}}{24}}}{\pi(1+\lambda)^{2}u_{0}}\label{lower bound for sin()/}.
\end{equation}
Therefore, returning to (\ref{after Jordan again}) and denoting by $c$ the smallest integer such that $b(c)\neq0$, 
\begin{align}
\left|f\left(-\frac{e^{-iw}}{\sqrt{4N}}\right)\right| & \leq\left(\frac{\pi e^{\frac{\lambda u_{0}}{2}}}{2\left(1-\lambda\right)u_{0}}\right)^{k+\frac{1}{2}}\sum_{n=1}^{\infty}|b(n)|\,\exp\left(-\frac{\pi n}{2r\sqrt{N}}\,\frac{\sin\left(\text{Re}(w)\right)}{\cosh\left(\text{Im}(w)\right)-\cos\left(\text{Re}(w)\right)}\right)\nonumber \\
 & <\left(\frac{\pi e^{\frac{\lambda u_{0}}{2}}}{2\left(1-\lambda\right)u_{0}}\right)^{k+\frac{1}{2}}\sum_{n=1}^{\infty}|b(n)|\,\exp\left(-\frac{n(1-\lambda)e^{-\frac{\pi^{2}\lambda^{2}}{24}}}{r\sqrt{N}\left(1+\lambda\right)^{2}u_{0}}\right)\nonumber \\
 & =\left(\frac{\pi e^{\frac{\lambda u_{0}}{2}}}{2\left(1-\lambda\right)u_{0}}\right)^{k+\frac{1}{2}}|b(c)|\,\exp\left(-\frac{(1-\lambda)\,c\,e^{-\frac{\pi^{2}\lambda^{2}}{24}}}{r\sqrt{N}\,\left(1+\lambda\right)^{2}u_{0}}\right)\,\sum_{n\geq c}\left|\frac{b(n)}{b(c)}\right|\exp\left(-\frac{(1-\lambda)\,e^{-\frac{\pi^{2}\lambda^{2}}{24}}\,(n-c)}{r\sqrt{N}\left(1+\lambda\right)^{2}u_{0}}\right).\label{Final bound for f cusp}
\end{align}
Finally, since $0<u_{0}<\frac{\pi}{2}$ and $0<\lambda<1$ is arbitrary, we may now take $\lambda=\frac{1}{2}$ and get the bound
\begin{align}
\left|f\left(-\frac{e^{-iw}}{\sqrt{4N}}\right)\right| & \leq|b(c)|\,\left(\frac{\pi e^{\frac{\pi}{8}}}{2u_{0}}\right)^{k+\frac{1}{2}}\,\exp\left(-\frac{2ce^{-\frac{\pi^{2}}{96}}}{9r\sqrt{N}}\,\frac{1}{u_{0}}\right)\,\sum_{n\geq c}\left|\frac{b(n)}{b(c)}\right|\exp\left(-\frac{4e^{-\frac{\pi^{2}}{96}}}{9\pi r\sqrt{N}}(n-c)\right)\label{first epxression in the inequality}\\
 & \leq\frac{C^{\prime}}{u_{0}^{k+\frac{1}{2}}}\exp\left(-\frac{A}{u_{0}}\right),\label{zero case half integral weight cusp}
\end{align}
where $C^{\prime}$ and $A$ only depend on the weight of the cusp
form $f(z)$ and on the level $4N$. An explicit expression for $A$
is, therefore,
\begin{equation}
A=\frac{2ce^{-\frac{\pi^{2}}{96}}}{9r\sqrt{N}},\label{Explicit representation A}
\end{equation}
where, as already described, $r$ and $c$ only depend on the cusp
form $g(z)$ given by (\ref{slash operator on derivative!}).\footnote{Of course, there are other possible expressions for $A$ with a different
choice of $\lambda$.} Note that $C$ contains the numerical value of the infinite series
on (\ref{first epxression in the inequality}), which is clearly convergent because $b(n)=O(n^{\alpha})$. This proves (\ref{bound derivative half cusp})
for $p=0$. For every $p\geq 1$, let us return to Cauchy's integral
formula (\ref{Cauchy's formula at beginning!}) with $\lambda=\frac{1}{2}$
and use (\ref{zero case half integral weight cusp}) to get
\begin{equation}
\left|\left[\frac{d^{p}}{du^{p}}f\left(-\frac{e^{-iu}}{\sqrt{4N}}\right)\right]_{u=u_{0}}\right|\leq\frac{2^{p}p!}{\pi u_{0}^{p+1}}\,\intop_{C_{\frac{u_{0}}{2}}(u_{0})}\,\left|f\left(-\frac{e^{-iw}}{\sqrt{4N}}\right)\right|\,|dw|<C\,\frac{2^{p}p!}{u_{0}^{p+k+\frac{1}{2}}}e^{-\frac{A}{u_{0}}},\label{final steps for proof of Cauchy}
\end{equation}
which completes the proof of (\ref{bound derivative half cusp}).
The proof of the uniform bound (\ref{second inequality on Lemma!})
is the same, the only difference being in taking the slash operator.
Instead of the matrix $\gamma$ given in (\ref{alpha slash matrix}),
we consider
\begin{equation}
\gamma^{\star}=\left(\begin{array}{cc}
1 & 0\\
\sqrt{4N} & 1
\end{array}\right)\in \text{SL}(2,\mathbb{Z}),\,\,\,\,\,\,\,\,\,
h(z):=\left(f|\gamma^{\star}\right)(z)=\tau\,\left(\sqrt{4N}z+1\right)^{-k-\frac{1}{2}}f\left(\frac{z}{1+\sqrt{4N}z}\right),\label{definition of h(z)-1}
\end{equation}
where, as in (\ref{slash operator on derivative!}), $\tau$ is a
complex number such that $|\tau|=1$. Applying (\ref{definition of h(z)-1})
with 
\[
z:=\frac{1}{\sqrt{4N}\left(e^{-iw}-1\right)}\in\mathbb{H},
\]
we obtain, in analogy to (\ref{after taking slash}),
\[
f\left(\frac{e^{iw}}{\sqrt{4N}}\right)=\frac{1}{\tau}\,\left(\frac{e^{-iw}}{e^{-iw}-1}\right)^{k+\frac{1}{2}}\,h\left(\frac{1}{\sqrt{4N}\left(e^{-iw}-1\right)}\right).
\]
Since $f$ is a cusp form and $N$ is a perfect square, we know that
$h(z)$ also admits the Fourier expansion 
\begin{equation}
h(z):=\sum_{n=1}^{\infty}b^{\star}(n)\,e^{\frac{2\pi in}{r^{\prime}}z},\,\,\,\,\text{for some }r^{\prime}\in\mathbb{N}.\label{construction of slash-1-1}
\end{equation}
From this point on, the proof of (\ref{second inequality on Lemma!})
is exactly the same as before and, just like (\ref{Explicit representation A}),
we can find an explicit expression for the constant $B$ in (\ref{second inequality on Lemma!}) in the form
\begin{equation}
B=\frac{2c^{\prime}e^{-\frac{\pi^{2}}{96}}}{9r^{\prime}\sqrt{N}},\label{expression for B explicit}
\end{equation}
where $c^{\prime}$ is the smallest integer such that $b^{\star}(c^{\prime})\neq0$ and $r^{\prime}$ is the integer appearing in the Fourier expansion (\ref{construction of slash-1-1}).

\end{proof}

\section{Proof of Theorem \ref{theorem 1.1}}\label{section of proof of theorem 1.1}

Throughout our proof we shall assume that $a_{f}(n)$ are real numbers.
The case where $a_{f}(n)$ are purely imaginary is analogous and the
necessary changes in the argument are already outlined in \cite{meher_half}.
First we show that $N_{0}^{+}(T)=\Omega\left(T^{1/2}\right)$, which
is the first part of (\ref{omega statement}). The proof that $N_{0}^{-}(T)=\Omega\left(T^{1/2}\right)$,
as we shall see, presents no extra difficulties.

Let $\left(\rho_{n}\right)_{n\in\mathbb{N}}$ be the sequence of zeros
of odd order of $L\left(s,f\right)+L\left(s,f|W_{4N}\right)$ such
that $\text{Re}(\rho_{n})=\frac{k}{2}+\frac{1}{4}$. Then we can write
$\rho_{n}:=\frac{k}{2}+\frac{1}{4}+i\tau_{n}$, with $\tau_{n}>0$
being an increasing sequence\footnote{Note that if $L\left(\frac{k}{2}+\frac{1}{4},f\right)+L\left(\frac{k}{2}+\frac{1}{4},f|W_{4N}\right)=0$,
we are excluding this real zero from the sequence $(\tau_{n})_{n\in\mathbb{N}}$.}. If we show that there is some $h>0$ such that, for infinitely many
values of $n$, $\tau_{n}<h\,n^{2}$, we are done. This
is the case because, if we choose the sequence $T_{n}:=hn^{2}$, then
we find that $N_{0}^{+}\left(T_{n}\right)\geq N_{0}^{+}(\tau_{n})=n=\sqrt{\frac{T_{n}}{h}}$.
Thus, we have built in this way a sequence $\left(T_{n}\right)_{n\in\mathbb{N}}$
such that $N_{0}^{+}\left(T_{n}\right)>\sqrt{\frac{T_{n}}{h}}$, which
ultimately establishes
\begin{equation}
\limsup_{T\rightarrow\infty}\,\frac{N_{0}^{+}(T)}{\sqrt{T}}>\frac{1}{\sqrt{h}},\,\,\,\,\text{or, equivalently, }\,\,\,N_{0}^{+}(T)=\Omega\left(T^{\frac{1}{2}}\right).\label{explicit lower bound with explicit constant}
\end{equation}

Hence, for the sake of contradiction, let us assume that there is
some $N_{0}$ such that, for every $n\geq N_{0}$ and any $h>0$,
$\tau_{n}\geq h\,n^{2}$.
We will now show that there exists some (large enough) $h$ for which
this assumption is contradicted. Indeed, if we construct the entire
function\footnote{The infinite product can be written because, due to the result in
\cite{meher_half} (Theorem B above), we already know that $L\left(s,f\right)+L\left(s,f|W_{4N}\right)$
has infinitely many zeros of the form $\frac{k}{2}+\frac{1}{4}+i\tau_{n}$.}
\begin{equation}
\varphi\left(y\right)=\prod_{j=1}^{\infty}\left(1-\frac{y^{2}}{\tau_{j}^{2}}\right)=\sum_{j=0}^{\infty}(-1)^{j}a_{2j}\,y^{2j},\label{Powert series for varphi (y)}
\end{equation}
we see that $a_{0}=1$ and, for $j\geq1$,
\begin{equation}
a_{2j}=\sum_{r_{1}\geq1}\sum_{r_{2}>r_{1}}...\sum_{r_{j}>r_{j-1}}\frac{1}{\tau_{r_{1}}^{2}\cdot...\cdot\tau_{r_{j}}^{2}}=\sum_{1\leq r_{1}<r_{2}<...<r_{j}}\frac{1}{\tau_{r_{1}}^{2}\cdot...\cdot\tau_{r_{j}}^{2}},\label{coefficient for the bound}
\end{equation}
where we are summing over $(r_{1},...,r_{j})\in\mathbb{N}^{j}\text{ such that }r_{1}<r_{2}<...<r_{j}$.
Note that, in the $k^{\text{th}}$ nested series in (\ref{coefficient for the bound}),
the index $r_{k}$ always satisfies $r_{k}\geq k$, due to the condition
$r_{k}>r_{k-1}>...>r_{1}\geq1$.

From this point on, we just need to find a suitable bound for $a_{2j}$. By considering one of the nested infinite series above, we have two possibilities: if, for some $1\leq k\leq j$, $r_{k}\geq N_{0}$, we know by the contradiction hypothesis that $\tau_{r_{k}}^{-2}\leq\frac{r_{k}^{-4}}{h^{2}}$. On the other hand, if $1\leq r_{k}\leq N_{0}-1$, then $\tau_{r_{k}}^{-2}\leq\frac{r_{k}^{-4}}{h^{\star2}}$,
where $h^{\star}:=\min_{1\leq n\leq N_{0}-1}\left\{ \frac{\tau_{n}}{n^{2}}\right\} $. Let us take a generic series in (\ref{coefficient for the bound}):
by the above reasoning, if $r_{k-1}\geq N_{0}-1$, then the contradiction hypothesis gives
\begin{equation}
\sum_{r_{k}>r_{k-1}}\frac{1}{\tau_{r_{k}}^{2}}=\sum_{r_{k}>r_{k-1}\geq N_{0}-1}\frac{1}{\tau_{r_{k}}^{2}}\leq\frac{1}{h^{2}}\,\sum_{r_{k}>r_{k-1}}\frac{1}{r_{k}^{4}}.\label{first case}
\end{equation}
On the other hand, if $1\leq r_{k-1}<N_{0}-1$,
\begin{align}
\sum_{r_{k}>r_{k-1}}\frac{1}{\tau_{r_{k}}^{2}} & =\sum_{r_{k}=r_{k-1}+1}^{N_{0}-1}\frac{1}{\tau_{r_{k}}^{2}}+\sum_{r_{k}=N_{0}}^{\infty}\frac{1}{\tau_{r_{k}}^{2}}\leq\frac{1}{h^{\star2}}\sum_{r_{k}=r_{k-1}+1}^{N_{0}-1}\frac{1}{r_{k}^{4}}+\frac{1}{h^{2}}\sum_{r_{k}=N_{0}}^{\infty}\frac{1}{r_{k}^{4}}\nonumber \\
 & =\frac{1}{h^{2}}\sum_{r_{k}>r_{k-1}}\frac{1}{r_{k}^{4}}+\frac{1}{h^{2}}\left(\frac{h^{2}}{h^{\star2}}-1\right)\sum_{r_{k}=r_{k-1}+1}^{N_{0}-1}\frac{1}{r_{k}^{4}}\leq\frac{1}{h^{2}}\sum_{r_{k}>r_{k-1}}\frac{1}{r_{k}^{4}}+\frac{1}{h^{2}}\left|\frac{h^{2}}{h^{\star2}}-1\right|\sum_{j=1}^{N_{0}-1}\frac{1}{j^{4}}\nonumber \\
 & \leq\frac{\mathscr{A}}{h^{2}}\,\sum_{r_{k}>r_{k-1}}\frac{1}{r_{k}^{4}},\label{second case of A}
\end{align}
for some constant $\mathscr{A}$ depending on $h$ and $N_{0}$ but
not on $k$. We can explicitly choose this constant by taking the
observation that there exists some $A$ such that
\[
\left|\frac{h^{2}}{h^{\star2}}-1\right|\sum_{j=1}^{N_{0}-1}\frac{1}{j^{4}}\leq A\,\sum_{j\geq N_{0}}\frac{1}{j^{4}}\leq A\,\sum_{r_{k}>r_{k-1}}\frac{1}{r_{k}^{4}},
\]
where the last inequality is due to the fact that $1\leq r_{k-1}<N_{0}-1$. Picking $\mathscr{A}:=1+A\geq 1$ gives (\ref{second case of A}).
Since $r_{\ell}\geq\ell$ always, a sufficient condition for $r_{k-1}\geq N_{0}-1$
is that $k\geq N_{0}$. Thus, we have at most $N_{0}-1$ infinite
series in (\ref{coefficient for the bound}) where we need to apply
(\ref{second case of A}). Hence, the sequence $a_{2j}$ can be bounded
in a simple form 
\begin{align}
a_{2j}=\sum_{r_{1}\geq1}\sum_{r_{2}>r_{1}}...\sum_{r_{j}>r_{j-1}}\frac{1}{\tau_{r_{1}}^{2}\cdot...\cdot\tau_{r_{j}}^{2}} & =\sum_{r_{1}\geq1}\sum_{r_{2}>r_{1}}...\sum_{r_{N_{0}}>r_{N_{0}-1}\geq N_{0}-1}...\sum_{r_{j}> r_{j-1}}\frac{1}{\tau_{r_{1}}^{2}\cdot...\cdot\tau_{r_{j}}^{2}}\nonumber \\
\leq\frac{\mathscr{A}^{N_{0}-1}}{h^{2j}}\sum_{r_{1}\geq1}\sum_{r_{2}>r_{1}}...\sum_{r_{j}> r_{j-1}}\frac{1}{r_{1}^{4}\cdot...\cdot r_{j}^{4}} & \leq\frac{\mathcal{K}}{h^{2j}}\,\sum_{r_{1}\geq1}\sum_{r_{2}>r_{1}}...\sum_{r_{j}> r_{j-1}}\frac{1}{r_{1}^{4}\cdot...\cdot r_{j}^{4}}=\frac{\mathcal{K}}{h^{2j}}\,\sum_{1\leq r_{1}<r_{2}<...<r_{j}}\frac{1}{r_{1}^{4}\cdot...\cdot r_{j}^{4}},\label{first inequality for bound of coefficients de la valleee poussin prrrrooooof}
\end{align}
for some $\mathcal{K}$ only depending on $h$ and $N_{0}$.

We now see that a bound for $a_{2j}$ will depend on finding a bound for the new coefficient
\begin{equation}
b_{2j}:=\sum_{r_{1}\geq1}\sum_{r_{2}>r_{1}}...\sum_{r_{j}> r_{j-1}}\frac{1}{r_{1}^{4}\cdot...\cdot r_{j}^{4}}=\sum_{1\leq r_{1}<...<r_{j}}\,\frac{1}{r_{1}^{4}\cdot...\cdot r_{j}^{4}}.\label{b2j definition}
\end{equation}
Whoever is familiar with one of Euler's proofs of the formula for $\zeta(2n)$
recognizes this sequence of numbers as coming from the Weierstrass factorization of the sine function, which takes the form 
\begin{equation}
\frac{\sinh(\pi\sqrt{y})\sin(\pi\sqrt{y})}{\pi^{2}y}=\prod_{j=1}^{\infty}\left(1+\frac{y}{j^{2}}\right)\prod_{j=1}^{\infty}\left(1-\frac{y}{j^{2}}\right)=\prod_{j=1}^{\infty}\left(1-\frac{y^{2}}{j^{4}}\right):=1+\sum_{j=1}^{\infty}(-1)^{j}b_{2j}y^{2j}.\label{sinh sin product}
\end{equation}
Thus, we can get precise information about $b_{2j}$ (and, consequently,
about $a_{2j}$) by interpreting them as the Taylor coefficients of the function on the left-hand side of (\ref{sinh sin product}).\footnote{It is somewhat poetic that the same idea that led Euler to have the first grasp on the nature of $\zeta(2n)$ can be used to understand the zeros of the same function.}
Indeed, it is quite simple\footnote{Although these computations are standard, we did not find any reference
containing this Taylor expansion. In order to be self-contained, we
have decided to briefly present it.} to see that, for any $y\in\mathbb{R}$, 
\begin{align}
\sinh(\pi\sqrt{y})\,\sin(\pi\sqrt{y}) & =-\frac{i}{2}\left[\cos\left((1-i)\pi\sqrt{y}\right)-\cos\left((1+i)\pi\sqrt{y}\right)\right]=-\frac{i}{2}\sum_{k=0}^{\infty}\frac{(-1)^{k}\pi^{2k}}{(2k)!}y^{k}\left\{ \left(1-i\right)^{2k}-(1+i)^{2k}\right\} \nonumber \\
 & =-\sum_{k=0}^{\infty}\frac{(-1)^{k}\pi^{2k}}{(2k)!}\left(2y\right)^{k}\sin\left(\frac{\pi}{2}k\right)=\sum_{j=0}^{\infty}\frac{(-1)^{j}2^{2j+1}\pi^{4j+2}}{(4j+2)!}y^{2j+1}.\label{Taylor series for produ}
\end{align}
Thus, comparing (\ref{sinh sin product}) with (\ref{Taylor series for produ}) and returning to the inequality (\ref{first inequality for bound of coefficients de la valleee poussin prrrrooooof}), we find that
\begin{equation}
b_{2j}:=\sum_{1\leq r_{1}<...<r_{j}}\,\frac{1}{r_{1}^{4}\cdot...\cdot r_{j}^{4}}=\frac{2^{2j+1}\pi^{4j}}{(4j+2)!}\implies a_{2j}\leq \mathcal{K}\,\,\frac{2^{2j+1}\pi^{4j}}{h^{2j}(4j+2)!}.\label{bound for this guy}
\end{equation}

Our proof will be concluded by seeing that (\ref{bound for this guy})
contradicts (\ref{formula for cosh times Rf(t)}) and the bounds for the derivatives of $f$ found
in Lemma \ref{lemma bound theta}. By construction of $\varphi(t)$, we know that $R_{f}(t)\varphi(t)$
must have constant sign for any $t\in\mathbb{R}$. If $R_{f}(t)\varphi(t)\geq0$
for any $t\in\mathbb{R}$, the continuous function $Q_{f}:\,\left(0,\,\frac{\pi}{2}\right)\longmapsto\mathbb{R}$
defined by the integral
\[
Q_{f}(u):=\intop_{0}^{\infty}R_{f}(t)\varphi(t)\,\cosh\left(\left(\frac{\pi}{2}-u\right)t\right)\,dt,\,\,\,\,0<u<\frac{\pi}{2},
\]
is positive decreasing. Analogously, if $R_{f}(t)\varphi(t)\leq0$,
$Q_{f}(u)$ will be negative increasing. In both cases, we have that
the continuous function
\[
|Q_{f}(u)|:=\left|\intop_{0}^{\infty}R_{f}(t)\varphi(t)\,\cosh\left(\left(\frac{\pi}{2}-u\right)t\right)\,dt\right|,\,\,\,\,0<u<\frac{\pi}{2},
\]
will be positive decreasing. On the other hand, if we use the power
series (\ref{Powert series for varphi (y)}) for $\varphi(t)$, we
see that $Q_{f}(u)$ can be written as an infinite series of the form
\begin{equation}
Q_{f}(u)=\intop_{0}^{\infty}R_{f}(t)\,\varphi(t)\,\cosh\left(\left(\frac{\pi}{2}-u\right)t\right)\,dt=\sum_{j=0}^{\infty}(-1)^{j}a_{2j}\,\intop_{0}^{\infty}R_{f}(t)\,t^{2j}\,\cosh\left(\left(\frac{\pi}{2}-u\right)t\right)\,dt.\label{the interchange necessary for the proof}
\end{equation}
Note that the interchange of the orders of summation and integration
in (\ref{the interchange necessary for the proof}) is justified by Fubini's
theorem and the bounds for $a_{2j}$ given in (\ref{bound for this guy}): indeed, 
\begin{align}
\intop_{0}^{\infty}\sum_{j=0}^{\infty}|a_{2j}|t^{2j}\,|R_{f}(t)|\,\cosh\left(\left(\frac{\pi}{2}-u\right)t\right)\,dt & \leq\mathcal{K}\intop_{0}^{\infty}\,\sum_{j=0}^{\infty}\frac{2^{2j+1}\pi^{4j}}{h^{2j}(4j+2)!}t^{2j}\,|R_{f}(t)|\,\cosh\left(\left(\frac{\pi}{2}-u\right)t\right)\,dt\nonumber \\
&\leq2\mathcal{K}\intop_{0}^{\infty}\,\sum_{j=0}^{\infty}\frac{1}{(4j)!}\left(\frac{2\pi^{2}t}{h}\right)^{2j}\,|R_{f}(t)|\,\cosh\left(\left(\frac{\pi}{2}-u\right)t\right)\,dt\nonumber\\
& <2\mathcal{K}\intop_{0}^{\infty}\,\sum_{j=0}^{\infty}\frac{1}{j!}\left(\pi\sqrt{\frac{2t}{h}}\right)^{j}\,|R_{f}(t)|\,\cosh\left(\left(\frac{\pi}{2}-u\right)t\right)\,dt\nonumber \\
&<2\mathcal{K}\intop_{0}^{\infty}\,|R_{f}(t)|\,\exp\left(\pi\sqrt{\frac{2t}{h}}+\left(\frac{\pi}{2}-u\right)t\right)\,dt\nonumber\\
& \leq2\mathcal{K}\,C\,\intop_{0}^{\infty}\,t^{A}\,\exp\left(-ut+\pi\sqrt{\frac{2t}{h}}\right)\,dt<\infty,\label{Steps in Fubini theorem for application}
\end{align}
because $u>0$ by hypothesis. In the last step we have used Stirling's
formula for $\Gamma(s)$ (\ref{preliminary stirling}) as well as
the convex estimate for $L(s,f)$ (\ref{estimate convex for cusp form L function}),
which show that $|R_{f}(t)|\leq C\,|t|^{A}e^{-\frac{\pi}{2}|t|}$
for sufficiently large $C$ and $A$. Having assured that we can perform
the operation (\ref{the interchange necessary for the proof}), it
now follows from (\ref{Steps in Fubini theorem for application})
that the Leibniz rule holds for the integral on the right-hand side
of (\ref{the interchange necessary for the proof}) and, after using (\ref{formula for cosh times Rf(t)}), we find the expression
\begin{align}
Q_{f}(u) & =\sum_{j=0}^{\infty}(-1)^{j}a_{2j}\,\frac{d^{2j}}{du^{2j}}\,\left\{ \intop_{0}^{\infty}R_{f}(t)\,\cosh\left(\left(\frac{\pi}{2}-u\right)t\right)\,dt\right\} \nonumber \\
=\frac{\pi}{2}\,\sum_{j=0}^{\infty}(-1)^{j}\,a_{2j}\,\frac{d^{2j}}{du^{2j}} & \left[e^{\frac{i\pi}{2}\left(\frac{k}{2}+\frac{1}{4}\right)}\,e^{-i\left(\frac{k}{2}+\frac{1}{4}\right)u}f\left(-\frac{e^{-iu}}{\sqrt{4N}}\right)+e^{-i\frac{\pi}{2}\left(\frac{k}{2}+\frac{1}{4}\right)}e^{i\left(\frac{k}{2}+\frac{1}{4}\right)u}\,f\left(\frac{e^{iu}}{\sqrt{4N}}\right)\right].\label{almost final inequality}
\end{align}
Using Lemma \ref{lemma bound theta} and our estimates for $a_{2j}$ (\ref{bound for this guy}),
we can bound uniformly the previous series with respect to $u$. Firstly,
according to Lemma \ref{lemma bound theta}, we have
\begin{align*}
\left|\frac{d^{2j}}{du^{2j}}\,\left[e^{-i\left(\frac{k}{2}+\frac{1}{4}\right)u}f\left(-\frac{e^{-iu}}{\sqrt{4N}}\right)\right]\right| & \leq\sum_{\ell=0}^{2j}\left(\begin{array}{c}
2j\\
\ell
\end{array}\right)\,\left(\frac{k}{2}+\frac{1}{4}\right)^{2j-\ell}\left|\frac{d^{\ell}}{du^{\ell}}f\left(-\frac{e^{-iu}}{\sqrt{4N}}\right)\right|\\
<C\frac{e^{-\frac{A}{u}}}{u^{k+\frac{1}{2}}}\sum_{\ell=0}^{2j}\frac{(2j)!}{(2j-\ell)!}\,\left(\frac{k}{2}+\frac{1}{4}\right)^{2j-\ell}\,\frac{2^{\ell}}{u^{\ell}} & \leq C\frac{e^{-\frac{A}{u}}(2j)!}{u^{k+\frac{1}{2}}}\left(\frac{k}{2}+\frac{1}{4}\right)^{2j}\,\sum_{\ell=0}^{2j}\left(\frac{8}{(2k+1)u}\right)^{\ell}\\
 & =C\frac{e^{-\frac{A}{u}}(2j)!}{u^{k+2j+\frac{1}{2}}2^{4j}}\,\frac{8^{2j+1}-((2k+1)u)^{2j+1}}{8-(2k+1)u}.
\end{align*}
If we now pick $0<u<\frac{4}{2k+1}<\frac{\pi}{2}$, we see immediately
that
\[
\left|\frac{d^{2j}}{du^{2j}}\,\left[e^{-i\left(\frac{k}{2}+\frac{1}{4}\right)u}f\left(-\frac{e^{-iu}}{\sqrt{4N}}\right)\right]\right|<C^{\prime}\,\frac{2^{2j}(2j)!e^{-A/u}}{u^{2j+k+\frac{1}{2}}},
\]
while
\[
\left|\frac{d^{2j}}{du^{2j}}\,\left[e^{i\left(\frac{k}{2}+\frac{1}{4}\right)u}\,f\left(\frac{e^{iu}}{\sqrt{4N}}\right)\right]\right|<D^{\prime}\,\frac{2^{2j}(2j)!e^{-B/u}}{u^{2j+k+\frac{1}{2}}}.
\]
Thus
\begin{equation}
|Q_{f}(u)|\leq\frac{\pi}{2}\,\sum_{j=0}^{\infty}|a_{2j}|\frac{2^{2j}(2j)!}{u^{2j+k+\frac{1}{2}}}\,\left\{ C^{\prime}e^{-A/u}+D^{\prime}\,e^{-B/u}\right\} \leq\frac{\pi\beta}{u^{k+\frac{1}{2}}}\,e^{-\alpha/u}\sum_{j=0}^{\infty}|a_{2j}|\frac{2^{2j}(2j)!}{u^{2j}},\label{Final inequality}
\end{equation}
where $\beta=\max\left\{ C^{\prime},D^{\prime}\right\} $ and $\alpha=\min\left\{ A,B\right\} $.
To finish, we must estimate the infinite series on the right-hand
side of (\ref{Final inequality}): we do this by employing
(\ref{bound for this guy}), which yields
\begin{align}
\sum_{j=0}^{\infty}|a_{2j}|\frac{2^{2j}(2j)!}{u^{2j}} & \leq\mathcal{K}\,\sum_{j=0}^{\infty}\frac{2^{4j+1}\pi^{4j}(2j)!}{h^{2j}(4j+2)!}\cdot\frac{1}{u^{2j}}=\frac{\sqrt{\pi}\,\mathcal{K}}{2}\,\sum_{j=0}^{\infty}\frac{\pi^{4j}}{\Gamma\left(2j+\frac{3}{2}\right)(2j+1)(hu)^{2j}}\nonumber \\
 & < \sqrt{\pi}\,\mathcal{K}\,\sum_{j=0}^{\infty}\frac{\pi^{4j}}{(2j+1)!(hu)^{2j}}< \sqrt{\pi}\,\mathcal{K}\,\cosh\left(\frac{\pi^{2}}{h\,u}\right)\leq\sqrt{\pi}\,\mathcal{K}\exp\left(\frac{\pi^{2}}{h\,u}\right).\label{final simplifications on the infinite series}
\end{align}
Combining (\ref{Final inequality}) and (\ref{final simplifications on the infinite series}) gives
\[
|Q_{f}(u)|<\frac{\pi^{\frac{3}{2}}\beta\,\mathcal{K}}{u^{k+\frac{1}{2}}}\,\exp\left(-\left(\alpha-\frac{\pi^{2}}{h}\right)\frac{1}{u}\right).
\]
Hence, if we pick $h$ such that $h>\frac{\pi^{2}}{\alpha}$, then
\[
\lim_{u\rightarrow0^{+}}\left|Q_{f}(u)\right|<\pi^{\frac{3}{2}}\beta\,\mathcal{K}\lim_{u\rightarrow0^{+}}u^{-k-\frac{1}{2}}\exp\left(-\left(\alpha-\frac{\pi^{2}}{h}\right)\frac{1}{u}\right)=0,
\]
which contradicts the fact that $|Q_{f}(u)|$ is positive decreasing.
Consequently, we have found $h>\frac{\pi^{2}}{\alpha}$ such that
$\tau_{n}<h\,n^{2}$ for infinitely many values of $n$. This shows
(\ref{lim sup  stattteeeement}) and so we complete the proof of our
Theorem to $N_{0}^{+}(T)$.

Finally, we note that we can replace the value of $d$ in (\ref{lim sup  stattteeeement})
by an explicit constant depending on the properties of the cusp form
$f$. Indeed, since $\alpha=\min\left\{ A,B\right\} $, we may assume
without any loss of generality that $\alpha=A$ and so $\alpha$ is
explicitly given by (\ref{Explicit representation A}). Using the
inequality in (\ref{explicit lower bound with explicit constant})
and taking $h=\frac{4\pi^{2}}{\alpha}$, we find that\footnote{It is possible to improve the value of the explicit constant on (\ref{inequality for number of zeros})
by taking a more careful choice of $\lambda$ in (\ref{Final bound for f cusp}).
Recall that the constant (\ref{Explicit representation A}) was obtained
by the choice $\lambda=\frac{1}{2}$.}
\begin{equation}
\limsup_{T\rightarrow\infty}\,\frac{N_{0}^{+}(T)}{\sqrt{T}}>\frac{e^{-\frac{\pi^{2}}{192}}}{6\pi}\sqrt{\frac{2c}{r\sqrt{N}}},\label{inequality for number of zeros}
\end{equation}
where $c$ and $r$ are defined with respect to the Fourier expansion
(\ref{construction of slash}) of $g(z):=\left(f|\gamma\right)(z)$,
defined by the slash operator (\ref{slash operator on derivative!}).
This completes the proof.

\bigskip{}

In order to study the number $N_{0}^{-}(T)$, the argument is the same
with a small modification. Replacing $R_{f}(t)$ by $I_{f}(t)$, instead of considering $Q_{f}(u)$
we study the similar function
\[
P_{f}(u)=\intop_{0}^{\infty}tI_{f}(t)\,\varphi(t)\,\cosh\left(\left(\frac{\pi}{2}-u\right)t\right)\,dt,
\]
where $\varphi(t)$ is given by (\ref{Powert series for varphi (y)}).
Invoking (\ref{Formula for If(t) integral representation sinh}) and
using the bounds for the coefficients $a_{2j}$ (\ref{bound for this guy}),
we can see that $|P_{f}(u)|\rightarrow0$ as $u\rightarrow0^{+}$,
which again contradicts the first hypothesis over the zeros of $N_{0}^{-}(T)$.
$\blacksquare$

\section{Lemmas for the proof of Theorem \ref{theorem 1.2}} \label{lemmata Lekkerkerker}

Throughout this section, we will assume that $f(z)$ satisfies the conditions of Theorem \ref{theorem 1.2}. Thus, $N$ will always be a perfect square and the Fourier coefficients, $a_{f}(n)$, will always be real or purely imaginary numbers. 

We start with a lemma which was proved by Wilton for
Ramanujan's Dirichlet series (see Lemma 3 of \cite{Wilton_tau}). Wilton used
it to give a Vorono\"i-type formula for the Riesz sum
\[
\sum_{n\leq x}{}^{^{\prime}}\tau(n)\,(x-n)^{\rho}\,e^{\frac{2\pi ip}{q}n},\,\,\,\,\,0<p<q,\,\,\,\rho>0,\,\,\,x>0.
\]
To bring Wilton's result into our study, we shall adapt a lemma
in Lekkerkerker's thesis {[}\cite{lekkerkerker_thesis}, Chpt. 3, Lemma 3.8{]},
which acts as a generalization of Wilton's lemma in the setting of
Dirichlet series. Our proof will take into consideration the uniform
bound (\ref{uniform bound for cusp forms}) for a generic cusp form
$f$.

\begin{lemma} \label{wilton bound}
Let $M$ be an integer $\geq2$ and define $\delta:=\delta_{z,M}$
as follows
\begin{equation}
\delta=\begin{cases}
0 & 0<\text{Im}(z)\leq\frac{\sqrt{N}}{\pi M},\\
1 & \text{Im}(z)>\frac{\sqrt{N}}{\pi M}.
\end{cases}\label{definition of delta in proof}
\end{equation}
Then we have that
\begin{equation}
\sum_{m=1}^{M}a_{f}(m)\,e^{\frac{2\pi im}{\sqrt{4N}}z}-\delta\,f\left(\frac{z}{\sqrt{4N}}\right)=O\left(e^{-\frac{2\pi\text{Im}(z)}{\sqrt{4N}}(M+1)}\,M^{\frac{k}{2}+\frac{1}{4}}\log(M)\right)\label{bound in lemma}
\end{equation}
uniformly in $z$.
\end{lemma}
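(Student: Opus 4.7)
My approach is to write $T(z):=\sum_{m=1}^M a_f(m)\,e^{2\pi i m z/\sqrt{4N}} - \delta\,f(z/\sqrt{4N})$ as a single contour integral and to estimate it using only the uniform bound (\ref{uniform bound for cusp forms}). Set $q := e^{2\pi i z/\sqrt{4N}}$ and $y := \text{Im}(z)$, so that $|q| = e^{-2\pi y/\sqrt{4N}} \in (0,1)$, and consider the generating function
$$g(w) := \sum_{m=1}^\infty a_f(m)\,w^m \qquad (|w|<1),$$
which satisfies $g(q) = f(z/\sqrt{4N})$. Translating (\ref{uniform bound for cusp forms}) through the change of variables $w=e^{2\pi i z/\sqrt{4N}}$ gives $|g(w)| \leq C_1 \bigl(-\log|w|\bigr)^{-k/2-1/4}$ for some $C_1 = C_1(f)$.

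From Cauchy's formula $a_f(m) = \frac{1}{2\pi i}\oint_{|w|=r} g(w)\,w^{-m-1}\,dw$ and summation of the finite geometric series $\sum_{m=1}^M (q/w)^m$, for any $0 < r < 1$ with $r \neq |q|$ I obtain the decomposition $\sum_{m=1}^M a_f(m)\,q^m = J_1(r) - J_2(r)$, where
$$J_1(r) = \frac{1}{2\pi i}\oint_{|w|=r}\frac{g(w)\,q}{w(w-q)}\,dw,\qquad J_2(r) = \frac{1}{2\pi i}\oint_{|w|=r}\frac{g(w)\,q^{M+1}}{w^{M+1}(w-q)}\,dw.$$
Since $g(0) = 0$, the residue theorem identifies $J_1(r)$ with $g(q)\cdot\mathbf{1}_{|q|<r}$. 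The critical observation is that the threshold defining $\delta$ corresponds exactly to $|q| = e^{-1/M}$, so I can pick $r$ on the correct side of $|q|$ to match $J_1(r)$ with $\delta\,f(z/\sqrt{4N})$: take $r = e^{-1/(2M)}$ when $\delta = 1$ (then $|q| < e^{-1/M} < r$) and $r = e^{-2/M}$ when $\delta = 0$ (then $r < e^{-1/M} \leq |q|$). With either choice, $T(z) = -J_2(r)$, reducing the lemma to a single contour-integral bound.

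To estimate $J_2(r)$ I parametrize $w = re^{i\theta}$. By construction, $-\log r \asymp 1/M$ and $r^M$ is bounded below by $e^{-2}$, so the uniform estimate yields $|g(re^{i\theta})| \ll M^{k/2+1/4}$; the extracted factor $|q|^{M+1}$ equals $e^{-2\pi y(M+1)/\sqrt{4N}}$. The remaining nontrivial ingredient is the Poisson-kernel-type estimate
$$\int_0^{2\pi}\frac{d\theta}{|re^{i\theta}-q|} \;\ll\; \log\frac{1}{\bigl|\,|q|-r\,\bigr|},$$
valid for $r,|q|$ bounded away from zero; with my choice of $r$, one checks $\bigl|\,|q|-r\,\bigr| \gg 1/M$, so the integral is $O(\log M)$. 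Multiplying these factors yields $|J_2(r)| \ll e^{-2\pi y(M+1)/\sqrt{4N}}\,M^{k/2+1/4}\,\log M$, which is exactly the claim.

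The main obstacle is the logarithmic integral estimate: the naive pointwise lower bound $|re^{i\theta}-q| \geq \bigl|\,|q|-r\,\bigr|$ would produce a factor of $1/\bigl|\,|q|-r\,\bigr| \asymp M$ rather than $\log M$, weakening the exponent from $k/2+1/4$ to the useless $k/2+5/4$. The correct logarithmic estimate follows from writing $|re^{i\theta}-q|^2 = (r-|q|)^2 + 4r|q|\sin^2(\theta/2)$ and an arcsinh substitution; it is routine but is precisely the step that preserves the sharp exponent in the lemma.
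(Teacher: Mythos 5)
Your proof is correct and follows essentially the same strategy as the paper's, which (attributing it to Wilton and Lekkerkerker) also writes the difference $T(z)$ as a single contour integral $I_{\beta,M}$ over a circle of radius $e^{-\beta/M}$, evaluates it by residues, and bounds it via the uniform estimate (\ref{uniform bound for cusp forms}) together with the same $O(\log M)$ Poisson-kernel-type integral estimate. The only cosmetic differences are your $J_1-J_2$ decomposition (the paper packages the whole thing as one integral whose residues are computed directly) and your choice $r=e^{-2/M}$ in the $\delta=0$ case where the paper uses $e^{-3/(2M)}$; neither affects the argument.
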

\begin{proof}
Note that, if we show (\ref{bound in lemma}) for $0\leq\text{Re}(z)<\sqrt{4N}$
we are done due to the periodicity of the expansion on the left-hand
side of (\ref{bound in lemma}). Therefore, throughout our argument,
we shall assume that $0\leq\text{Re}(z)<\sqrt{4N}$ and we will let
$M$ be an integer greater than $2$. Consider the power series
\[
F(\zeta):=\sum_{n=1}^{\infty}a_{f}(n)\,\zeta^{n}.
\]
This infinite series converges in the circle $|\zeta|=1$ because $f(z)=F\left(e^{2\pi iz}\right)$. Following Wilton and Lekkerkerker
\cite{lekkerkerker_thesis,Wilton_tau}, we define a parameter $\beta$ in
the following form
\begin{equation}
\beta:=\begin{cases}
\frac{1}{2} & \text{if }\text{Im}(z)>\frac{\sqrt{N}}{\pi M}\\
\frac{3}{2} & \text{if Im}(z)\le\frac{\sqrt{N}}{\pi M}
\end{cases},\label{table of inclusions of beta}
\end{equation}
and, for each value of $\beta$, we consider a sequence of integrals,
$\left(I_{\beta,M}\right)_{M\in\mathbb{N}}$, defined as follows
\begin{equation}
I_{\beta,M}=\frac{e^{\frac{2\pi iz}{\sqrt{4N}}(M+1)}}{2\pi i}\,\intop_{|\zeta|=e^{-\beta/M}}\,\frac{F(\zeta)}{\zeta^{M+1}\left(\zeta-e^{2\pi iz/\sqrt{4N}}\right)}\,d\zeta.\label{integral in Lemma of Lekkerkerker}
\end{equation}

The value of the integral will be dependent on the choice of the parameter
$\beta$ in (\ref{table of inclusions of beta}). We are left with
two possibilities:
\begin{enumerate}
\item First, let $\beta=\frac{3}{2}$, so that $\text{Im}(z)\leq\frac{\sqrt{N}}{\pi M}$.
Then $\left|e^{2\pi iz/\sqrt{4N}}\right|=e^{-\frac{\pi\text{Im}(z)}{\sqrt{N}}}\geq e^{-\frac{1}{M}}>e^{-\frac{3}{2M}}$.
Hence, the only pole that the above integrand has inside the circle
$|\zeta|=e^{-\frac{3}{2M}}$ is located at $\zeta=0$ and its order
is $M+1$. A standard computation of the residue at this point gives
\begin{equation}
I_{\frac{3}{2},M}=-\sum_{m=1}^{M}a_{f}(m)\,e^{\frac{2\pi iz}{\sqrt{4N}}m}.\label{First evaluation of integral Im}
\end{equation}
\item Second, let $\beta=\frac{1}{2}$, so that $\text{Im}(z)>\frac{\sqrt{N}}{\pi M}$.
Therefore, $\left|e^{2\pi iz/\sqrt{4N}}\right|=e^{-\frac{\pi\text{Im}(z)}{\sqrt{N}}}<e^{-\frac{1}{M}}<e^{-\frac{1}{2M}}$,
which shows that the point $e^{2\pi iz/\sqrt{4N}}$ is located inside
the circle $|\zeta|=e^{-\frac{1}{2M}}$. Thus, besides the pole at
$\zeta=0$, the integrand in (\ref{integral in Lemma of Lekkerkerker})
has an additional simple pole, $\zeta=e^{2\pi iz/\sqrt{4N}}$. Once
more, a standard application of the residue theorem yields
\begin{equation}
I_{\frac{1}{2},M}=F\left(e^{\frac{2\pi iz}{\sqrt{4N}}}\right)-\sum_{m=1}^{M}a_{f}(m)\,e^{\frac{2\pi iz}{\sqrt{4N}}m}=f\left(\frac{z}{\sqrt{4N}}\right)-\sum_{m=1}^{M}a_{f}(m)\,e^{\frac{2\pi iz}{\sqrt{4N}}m}.\label{second evaluation integral Im}
\end{equation}
\end{enumerate}

Note that, in the cases (\ref{First evaluation of integral Im}) and
(\ref{second evaluation integral Im}), the symmetric value of the
left-hand side of (\ref{bound in lemma}) is displayed. Hence, if
we show that $I_{\beta,M}$ satisfies the bound at the right-hand
side of (\ref{bound in lemma}), we finish our argument. Indeed,
\begin{align}
|I_{\beta,M}| & \leq\frac{e^{-\frac{2\pi\text{Im}(z)}{\sqrt{4N}}(M+1)}e^{\frac{3}{2}(1+\frac{1}{M})}}{2\pi}\,\intop_{0}^{2\pi}\,\frac{\left|f\left(\frac{\theta}{2\pi}+\frac{i\beta}{2\pi M}\right)\right|}{\left|1-e^{\frac{\beta}{M}+\frac{2\pi iz}{\sqrt{4N}}-i\theta}\right|}\,d\theta\nonumber \\
 & \leq\frac{K\,e^{-\frac{2\pi\text{Im}(z)}{\sqrt{4N}}(M+1)}e^{\frac{3}{2}(1+\frac{1}{M})}}{2\pi}\left(\frac{\beta}{2\pi M}\right)^{-\frac{k}{2}-\frac{1}{4}}\,\intop_{0}^{2\pi}\frac{d\theta}{\left|1-e^{\frac{\beta}{M}-\frac{2\pi\text{Im}(z)}{\sqrt{4N}}+i\left(\frac{2\pi\text{Re}(z)}{\sqrt{4N}}-\theta\right)}\right|},\label{Boudn for the integral}
\end{align}
where in the last step we have used the fact that $|f(x+iy)|=O\left(y^{-\frac{k}{2}-\frac{1}{4}}\right)$
uniformly in $x$ (\ref{uniform bound for cusp forms}). Comparing (\ref{Boudn for the integral}) with
our aim (\ref{bound in lemma}), in order to finish our proof we just
need to show that
\begin{equation}
\intop_{0}^{2\pi}\frac{d\theta}{\left|1-e^{\frac{\beta}{M}-\frac{2\pi\text{Im}(z)}{\sqrt{4N}}+i\left(\frac{2\pi\text{Re}(z)}{\sqrt{4N}}-\theta\right)}\right|}=O\left(\log(M)\right).\label{integral to estimate}
\end{equation}

To do so, let us take a new variable $\eta:=\frac{\beta}{M}-\frac{2\pi\text{Im}(z)}{\sqrt{4N}}$.
Once more, we have two possibilities on the range of $\eta$: if $\beta=\frac{1}{2}$, then
$|\eta|>\frac{1}{2M}$. On the other hand, if $\beta=\frac{3}{2}$,
then $|\eta|\geq\frac{1}{2M}$. Therefore, we shall estimate (\ref{integral to estimate}) only assuming that $\beta=\frac{3}{2}$, as for the case $\beta=\frac{1}{2}$, the same
computations are allowed.

If we set $\varphi:=\frac{2\pi\text{Re}(z)}{\sqrt{4N}}$, then
by the hypothesis $0\leq\text{Re}(z)<\sqrt{4N}$, the range of $\varphi$
will be $0\leq\varphi<2\pi$. Taking this change of variables, we
just need to estimate the integral
\[
J(\eta):=\intop_{\varphi-2\pi}^{\varphi}\frac{d\phi}{\left|1-e^{\eta+i\phi}\right|},
\]
subject to the condition that $|\eta|>\frac{1}{2M}$. First, if we
assume that $|\eta|\geq1$, then $J(\eta)$ can be simply estimated as follows
\[
J(\eta)=\intop_{\varphi-2\pi}^{\varphi}\frac{d\phi}{\left|1-e^{\eta+i\phi}\right|}\leq\frac{2\pi}{\left|1-e^{\eta}\right|}\leq\frac{2\pi}{1-e^{-1}},
\]
which shows that our integral is uniformly bounded in this range.
Thus, from now on we may suppose that $\frac{1}{2M}\leq|\eta|<1$.
Since 
\[
\left|1-e^{\eta+i\phi}\right|=\sqrt{\left(1-e^{\eta}\right)^{2}+4e^{\eta}\sin^{2}\left(\frac{\phi}{2}\right)}\geq\sqrt{\left(1-e^{-\frac{1}{2M}}\right)^{2}+4e^{\eta}\sin^{2}\left(\frac{\phi}{2}\right)},
\]
we have that  
\begin{align}
\intop_{\varphi-2\pi}^{\varphi}\frac{d\phi}{\left|1-e^{\eta+i\phi}\right|} & \leq2\intop_{\frac{\varphi}{2}-\pi}^{\frac{\varphi}{2}}\frac{d\phi}{\sqrt{\left(1-e^{-\frac{1}{2M}}\right)^{2}+4e^{\eta}\sin^{2}\left(\psi\right)}}\leq2\intop_{-\pi}^{\pi}\frac{d\psi}{\sqrt{\left(1-e^{-\frac{1}{2M}}\right)^{2}+4e^{\eta}\sin^{2}\left(\psi\right)}}\nonumber \\
 & \leq8\,\intop_{0}^{\frac{\pi}{2}}\,\frac{d\psi}{\sqrt{\left(\frac{1}{2M}-\frac{1}{8M^{2}}\right)^{2}+\frac{4}{e}\sin^{2}(\psi)}}<8\,\intop_{0}^{\frac{\pi}{2}}\,\frac{d\psi}{\sqrt{\frac{1}{16M^{2}}+\frac{4}{e}\sin^{2}(\psi)}},\label{first set of inequalities for the integral}
\end{align}
where we have used the fact that $M\geq2$ and $1-e^{-x}\geq x-\frac{x^{2}}{2}$,
as well as the hypothesis $\frac{1}{2M}\leq|\eta|<1$. Now, by Jordan's
inequality, $\sin\left(\psi\right)>\frac{2}{\pi}\psi$, $0<\psi<\frac{\pi}{2}$,
we finally reach the bound 
\begin{align}
\intop_{0}^{\frac{\pi}{2}}\,\frac{d\psi}{\sqrt{\frac{1}{16M^{2}}+\frac{4}{e}\sin^{2}(\psi)}} & <\intop_{0}^{\frac{\pi}{2}}\,\frac{d\psi}{\sqrt{\frac{1}{16M^{2}}+\frac{16}{\pi^{2}e}\psi^{2}}}<4\intop_{0}^{\frac{\pi}{2}}\frac{d\psi}{\sqrt{\frac{1}{M^{2}}+\psi^{2}}}\nonumber \\
=4\intop_{0}^{1/M}\frac{d\psi}{\sqrt{\frac{1}{M^{2}}+\psi^{2}}}+4\intop_{1/M}^{\frac{\pi}{2}} & \frac{d\psi}{\sqrt{\frac{1}{M^{2}}+\psi^{2}}}<4+4\intop_{1/M}^{\frac{\pi}{2}}\frac{d\psi}{\psi}=4\log\left(\frac{\pi e}{2}M\right)=O\left(\log(M)\right),\label{after Jordan's lemma}
\end{align}
which finishes the proof of (\ref{integral to estimate}) and, consequently,
the proof of the whole lemma.
\end{proof}

\begin{remark}
Consider the Fourier expansion of the Fricke involution of $f,$
\[
\left(f|W_{4N}\right)\left(\frac{z}{\sqrt{4N}}\right)=\sum_{n=1}^{\infty}a_{f|W_{4N}}(n)\,e^{\frac{2\pi iz}{\sqrt{4N}}}.
\]
Then the previous lemma gives
\[
\sum_{m=1}^{M}a_{f|W_{4N}}(m)\,e^{\frac{2\pi im}{\sqrt{4N}}z}-\delta\,\left(f|W_{4N}\right)\left(\frac{z}{\sqrt{4N}}\right)=O\left(e^{-\frac{2\pi\text{Im}(z)}{\sqrt{4N}}(M+1)}\,M^{\frac{k}{2}+\frac{1}{4}}\log(M)\right)
\]
uniformly in $z$.
\end{remark}

Since our proof of Theorem \ref{theorem 1.2} will use Lekkerkerker's variant of
the Hardy-Littlewood method, we shall reach a point in the argument below
where we will have to find a lower bound for the function
\[
J(t):=\intop_{t}^{t+H}\left|R_{f}(u)\right|e^{\left(\frac{\pi}{2}-\epsilon\right)u}du,
\]
where $R_{f}(u)$ is given by (\ref{Definition Rf(t)}). The following
simple lemma will be crucial to achieve this lower bound. We omit its proof, as it is standard. 

\begin{lemma} \label{analytic continuation integral}
Let $c$ be the smallest positive integer for which $a_{f}(c)\neq0$.
Consider the Dirichlet series
\begin{equation}
L^{\star}(s,f):=c^{s}L(s,f)-a_{f}(c).\label{Dirichlet series L star}
\end{equation}
Also, let $L(s,f_{1})$ be defined as
\begin{equation}
L(s,f_{1}):=\sum_{n=c+1}^{\infty}\frac{a_{f}(n)}{\log\left(n/c\right)\,n^{s}},\,\,\,\,\,\text{Re}(s)>\frac{k}{2}+\frac{5}{4}.\label{Dirichlet series for log}
\end{equation}

Then $L(s,f_{1})$ can be analytically continued as an entire function
of $s$ and, for any $s\in \mathbb{C}$, the following relation holds
\begin{equation}
c^{-s}\intop_{s}^{\infty}L^{\star}(z,f)\,dz=L(s,f_{1}).\label{integral representation!}
\end{equation}
\end{lemma}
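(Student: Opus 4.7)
The plan is to first verify the identity (\ref{integral representation!}) in the half-plane of absolute convergence $\mathrm{Re}(s) > \frac{k}{2} + \frac{5}{4}$ by a direct term-by-term computation, and then extend to all $s \in \mathbb{C}$ by invoking the entireness of $L^{\star}(s,f)$ together with a standard antiderivative argument. Throughout, the Hecke bound (\ref{Hecke Bound!}) on $a_{f}(n)$ is what powers the convergence estimates.

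First I would work with $\mathrm{Re}(s) > \frac{k}{2} + \frac{5}{4}$. By minimality of $c$ and the defining series (\ref{Dirichlet L series cusp form at intro first}), we can write
\begin{equation*}
L^{\star}(s,f) = c^{s}L(s,f) - a_{f}(c) = \sum_{n=c+1}^{\infty} a_{f}(n)\left(\frac{c}{n}\right)^{s} = \sum_{n=c+1}^{\infty} a_{f}(n)\, e^{-s \log(n/c)}.
\end{equation*}
Integrating along the horizontal ray $z = s + t$, $t \in [0, \infty)$, one has formally
\begin{equation*}
\int_{s}^{\infty} L^{\star}(z,f)\, dz = \sum_{n=c+1}^{\infty} a_{f}(n) \int_{0}^{\infty} \left(\frac{c}{n}\right)^{s+t} dt = \sum_{n=c+1}^{\infty} \frac{a_{f}(n)\, (c/n)^{s}}{\log(n/c)} = c^{s}\, L(s,f_{1}).
\end{equation*}
The interchange of sum and integral is justified by Fubini, since the Hecke bound gives $|a_{f}(n)|\,(c/n)^{\mathrm{Re}(s)}/\log(n/c) \ll n^{k/2 + 1/4 - \mathrm{Re}(s)}/\log(n/c)$, which is summable precisely when $\mathrm{Re}(s) > \frac{k}{2} + \frac{5}{4}$. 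This establishes (\ref{integral representation!}) in the region of absolute convergence of (\ref{Dirichlet series for log}).

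Second, I would upgrade this to an identity of entire functions. Since $f$ is a cusp form, $L(s,f)$ is entire, hence so is $L^{\star}(s,f)$. As $\mathbb{C}$ is simply connected, $L^{\star}(\cdot,f)$ admits an entire antiderivative $F$; choosing a real $\sigma_{0} > \frac{k}{2} + \frac{5}{4}$ large enough that the series for $L^{\star}(z,f)$ converges absolutely for $\mathrm{Re}(z) \geq \sigma_{0}$, the exponential bound $L^{\star}(\sigma_{0}+u, f) = O\bigl((c/(c+1))^{u}\bigr)$ as $u \to \infty$ shows that $\int_{\sigma_{0}}^{\infty} L^{\star}(z,f)\,dz$ converges to a finite constant $C_{0}$. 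Therefore
\begin{equation*}
\int_{s}^{\infty} L^{\star}(z,f)\, dz := F(\sigma_{0}) - F(s) + C_{0}
\end{equation*}
is well-defined and entire in $s$, and $c^{-s}$ times this is also entire. On the half-plane $\mathrm{Re}(s) > \frac{k}{2} + \frac{5}{4}$ it coincides with the series $L(s,f_{1})$ by the previous step, so by the identity theorem $L(s,f_{1})$ extends analytically to all of $\mathbb{C}$ and the representation (\ref{integral representation!}) holds identically.

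The only delicate point is making sense of the improper integral $\int_{s}^{\infty}$ for $s$ outside the half-plane of convergence of the Dirichlet series for $L^{\star}(s,f)$; this is handled by the antiderivative construction above, which exploits the fact that $L^{\star}$ is genuinely entire even though its defining series is not, together with the exponential decay of $L^{\star}(\sigma + it, f)$ as $\sigma \to +\infty$ uniformly in $t$ on bounded sets.
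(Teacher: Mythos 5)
Your proposal is correct and takes essentially the same route as the paper's own (commented-out) proof: verify the identity by a term-by-term Fubini argument in the half-plane of absolute convergence, then extend to all of $\mathbb{C}$ by observing that both sides are entire and invoking the identity theorem. If anything, your antiderivative construction makes the meaning of $\int_{s}^{\infty} L^{\star}(z,f)\,dz$ for $\mathrm{Re}(s) \le \frac{k}{2}+\frac{5}{4}$ a bit more explicit than the paper's version, which is a slight improvement in rigor but not a different argument.
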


\begin{remark}\label{fricke remark}
The previous lemma also holds when we replace $f(z)$ by $\left(f|W_{4N}\right)(z)$.
Indeed, if $d$ is the smallest positive integer such that $a_{f|W_{4N}}(d)\neq0$,
then, considering the Dirichlet series
\begin{equation}
L^{\star}\left(s,f|W_{4N}\right):=d^{s}L\left(s,f|W_{4N}\right)-a_{f|W_{4N}}(d),\label{Dirichlet series L star cusp}
\end{equation}
as well as
\begin{equation}
L\left(s,(f|W_{4N})_{1}\right):=\sum_{n=d+1}^{\infty}\frac{a_{f|W_{4N}}(n)}{\log\left(n/d\right)\,n^{s}},\,\,\,\,\,\text{Re}(s)>\frac{k}{2}+\frac{5}{4},\label{Dirichlet series for log-1}
\end{equation}
one may see that $L\left(s,(f|W_{4N})_{1}\right)$ can be analytically
continued as an entire function of $s$. The relation analogous to
(\ref{integral representation!}) in this case is
\begin{equation}
d^{-s}\intop_{s}^{\infty}L^{\star}\left(z,f|W_{4N}\right)\,dz=L\left(s,(f|W_{4N})_{1}\right).\label{integral representation!-1}
\end{equation}
\end{remark}

The previous lemma defined a Dirichlet series, $L(s,f_{1})$, which
possesses certain characteristics that will be of interest in the forthcoming
argument. To get some of these properties, we
need to attach it to a series resembling a cusp form,
\[
f_{1}\left(\frac{z}{\sqrt{4N}}\right):=\sum_{n=c+1}^{\infty}\frac{a_{f}(n)}{\log(n/c)}\,e^{\frac{2\pi in}{\sqrt{4N}}z}.
\]
Our next lemma will provide bounds for $f_{1}\left(z/\sqrt{4N}\right)$
in two ranges of $\text{Im}(z)$. The estimates given below will be
of crucial importance for an application of Parseval's formula in
the proof of Claim \ref{our claim} at the end of this section. The proof of the
next lemma is essentially the same as Lekkerkerker's {[}\cite{lekkerkerker_thesis},
Chapt. 3, Lemma 10 and Theorem 12{]}, but we present it in a simpler
way, avoiding the erroneous lemma 7 of his thesis. We also remark
that there is an alternative argument using Fourier analysis (see
Remark \ref{sarnak remark} below).

\begin{lemma} \label{estimate small z}
Let $z\in\mathbb{H}$ and consider
\begin{equation}
f_{1}\left(\frac{z}{\sqrt{4N}}\right):=\sum_{n=c+1}^{\infty}\frac{a_{f}(n)}{\log(n/c)}\,e^{\frac{2\pi in}{\sqrt{4N}}z},\label{definition f1}
\end{equation}
where $c$ is the smallest integer for which $a_{f}(c)\neq0$. Then,
for all $\text{Im}(z)\geq\text{Im}(z_{0})>0$, $f_{1}\left(z/\sqrt{4N}\right)$
satisfies the estimate
\begin{equation}
f_{1}\left(\frac{z}{\sqrt{4N}}\right)\ll e^{-\frac{2\pi}{\sqrt{4N}}\text{Im}(z)}.\label{inequality large Im(z) for cusp f1}
\end{equation}

Moreover, for small $\text{Im}(z)$, $f_{1}\left(z/\sqrt{4N}\right)$
satisfies
\begin{equation}
f_{1}\left(\frac{z}{\sqrt{4N}}\right)\ll\frac{1}{\text{Im}(z)^{\frac{k}{2}+\frac{1}{4}}}\,\left(\log\left(\frac{1}{\text{Im}(z)}\right)\right)^{-1}.\label{first bound in fact}
\end{equation}
\end{lemma}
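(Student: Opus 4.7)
My plan is to treat the two bounds separately, as they require very different methods. For the large-$\text{Im}(z)$ regime $\text{Im}(z) \geq \text{Im}(z_0) > 0$, I would estimate the series (\ref{definition f1}) termwise, using Hecke's bound (\ref{Hecke Bound!}) together with $|e^{2\pi i n z/\sqrt{4N}}| = e^{-2\pi n \text{Im}(z)/\sqrt{4N}}$. Pulling out the leading factor $e^{-2\pi(c+1)\text{Im}(z)/\sqrt{4N}}$ leaves a series which is uniformly bounded for $\text{Im}(z) \geq \text{Im}(z_0)$, and since $c \geq 1$ this already implies (\ref{inequality large Im(z) for cusp f1}).

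The small-$\text{Im}(z)$ regime is the substantive one. Write $y := \text{Im}(z)$, set $M_0 := \lfloor \sqrt{N}/(\pi y) \rfloor$, and introduce the partial sums $T_n := \sum_{m=c+1}^n a_f(m)\, e^{2\pi i m z/\sqrt{4N}}$. Lemma \ref{wilton bound} (up to a harmless $O(1)$ correction coming from the finitely many missing terms $m \leq c$) provides a dichotomy at the Wilton threshold: $|T_n| \ll n^{k/2+1/4}\log n$ for $c < n \leq M_0$ (the $\delta = 0$ case), while $|T_n| \leq |f(z/\sqrt{4N})| + O(1) \ll y^{-k/2-1/4}$ for $n > M_0$ (the $\delta = 1$ case), by the uniform cusp-form bound (\ref{uniform bound for cusp forms}). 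With $g(n) := 1/\log(n/c)$, Abel summation (whose boundary term at infinity vanishes by absolute convergence of (\ref{definition f1}) for $y > 0$) gives
$$f_1(z/\sqrt{4N}) = \sum_{n > c} T_n \bigl[g(n) - g(n+1)\bigr],$$
where the increment is positive and of size $\asymp 1/(n \log^2(n/c))$ for large $n$.

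The estimate now follows by splitting the sum at $M_0$. On the range $c < n \leq M_0$ the polynomial bound for $T_n$ leads to a sum of order $\sum n^{k/2-3/4}/\log(n/c)$, whose tail dominates and is of order $M_0^{k/2+1/4}/\log M_0 \asymp y^{-k/2-1/4}/\log(1/y)$. On the range $n > M_0$ the uniform bound $y^{-k/2-1/4}$ factors out of the sum and the remaining increments telescope to $g(M_0+1) \asymp 1/\log(1/y)$, yielding the same order. Adding the two contributions produces (\ref{first bound in fact}).

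The main obstacle is that the slow decay of the weight $1/\log(n/c)$ rules out any approach based on pointwise bounds of $a_f(n)$: all cancellation must be extracted from the partial sums $T_n$ via Wilton. The technical heart is the precise matching of the polynomial regime $n \leq M_0$ with the uniform regime $n > M_0$ at $M_0 \asymp 1/y$, so that both ranges contribute the same order $y^{-k/2-1/4}/\log(1/y)$; this matching is essentially Lekkerkerker's key observation and is exactly the place where his original Lemma 7 is used. A minor nuisance is controlling the finitely many terms near $n = c+1$, where $\log(n/c)$ is very small, but these are bounded directly and absorbed into the implied constant.
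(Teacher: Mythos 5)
Your large-$\mathrm{Im}(z)$ estimate is fine, and the skeleton of your small-$\mathrm{Im}(z)$ argument — Abel summation of the increments $g(n)-g(n+1)$ against partial sums $T_n$, with the Wilton dichotomy at $M_0 \asymp 1/y$ — is exactly the paper's strategy. But there is a genuine gap in how you bound the tail $n > M_0$. Applying Lemma \ref{wilton bound} with $\delta=1$ gives $T_n = f(z/\sqrt{4N}) - P_c + Q_n$ with $|Q_n| \ll e^{-2\pi y(n+1)/\sqrt{4N}}\,n^{k/2+1/4}\log n$, and this error term is \emph{not} negligible: for $n$ comparable to $M_0 \asymp 1/y$ it is of size $\asymp y^{-k/2-1/4}\log(1/y)$, which is larger than $|f(z/\sqrt{4N})|$ by a logarithm. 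Your claim ``$|T_n| \leq |f(z/\sqrt{4N})| + O(1) \ll y^{-k/2-1/4}$ for $n>M_0$'' silently drops this term, and with the correct bound $\sup_{n>M_0}|T_n| \asymp y^{-k/2-1/4}\log(1/y)$ your telescoping estimate yields only $y^{-k/2-1/4}\log(1/y)\cdot g(M_0+1) \asymp y^{-k/2-1/4}$, losing exactly the factor $(\log(1/y))^{-1}$ that is the whole content of \eqref{first bound in fact}.

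The repair is not to bound $T_n$ uniformly but to split $T_n = (f - P_c) + Q_n$ and treat the two pieces differently. The $f-P_c$ piece factors out of the sum exactly, and the increments telescope to $g(M_0+1)\asymp 1/\log(1/y)$, giving $\ll y^{-k/2-1/4}/\log(1/y)$. The $Q_n$ piece must then be summed against the full weight $|g(n)-g(n+1)| \asymp 1/(n\log^2(n/c))$; since $\log n/\log^2(n/c) \ll 1/\log(1/y)$ for $n > M_0$, and $\sum_{n>M_0} e^{-\alpha n} n^{k/2-3/4} \ll y^{-k/2-1/4}$ with $\alpha = 2\pi y/\sqrt{4N}$, this contribution is also $\ll y^{-k/2-1/4}/\log(1/y)$. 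This decomposition — keeping the $f$-term and the $Q_m$-sum separate and bounding the latter with its weights — is precisely what the paper does in \eqref{f1 after taking M infinity} through \eqref{Bound Ql}. One further misattribution: you say the matching at $M_0$ ``is exactly the place where his original Lemma 7 is used,'' but the paper explicitly states that Lekkerkerker's Lemma 7 is erroneous and that the present proof is written to avoid it.
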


\begin{proof}
Let us define the integer $\ell:=1+\left[\frac{\sqrt{N}}{\pi\text{Im}(z)}\right]$.
Moreover, for $M\in\mathbb{N}_{0}$, consider $P_{M}\left(z/\sqrt{4N}\right)$
and $Q_{M}\left(z/\sqrt{4N}\right)$ as the functions defined by 
\begin{equation}
P_{0}(0):=0,\,\,\,\,\,\,\,\,P_{M}\left(\frac{z}{\sqrt{4N}}\right)=\sum_{m=1}^{M}a_{f}(m)\,e^{\frac{2\pi im}{\sqrt{4N}}z},\label{DEFINITION PM}
\end{equation}
\begin{equation}
Q_{M}(z)=P_{M}(z)-f\left(\frac{z}{\sqrt{4N}}\right).\label{definition QM}
\end{equation}
Note that $P_{M}(z)=0$ if $M\leq c-1$. By estimate (\ref{bound in lemma})
given at lemma \ref{wilton bound}, we have
\begin{equation}
P_{M}\left(\frac{z}{\sqrt{4N}}\right)=O\left(e^{-\frac{2\pi\text{Im}(z)}{\sqrt{4N}}(M+1)}\,M^{\frac{k}{2}+\frac{1}{4}}\log(M)\right),\,\,\,\text{for }M=2,3,...,\ell-1,\label{estimate for PM!}
\end{equation}
\begin{equation}
Q_{M}\left(\frac{z}{\sqrt{4N}}\right)=O\left(e^{-\frac{2\pi\text{Im}(z)}{\sqrt{4N}}(M+1)}\,M^{\frac{k}{2}+\frac{1}{4}}\log(M)\right),\,\,\,\text{for }M=\ell,\ell+1,...,\label{estimate for QM}
\end{equation}
uniformly in $z$. Estimate (\ref{estimate for PM!}) holds because
if $2\leq M\leq\ell-1\leq\frac{\sqrt{N}}{\pi\text{Im}(z)}$, then
$\text{Im}(z)\leq\frac{\sqrt{N}}{\pi M}$, and so we can apply (\ref{bound in lemma})
with $\delta=0$. Analogously, the estimate (\ref{estimate for QM})
follows from (\ref{bound in lemma}) with $\delta=1$.

\bigskip{}

We shall use these bounds to estimate the modified ``polynomial''
\begin{equation}
P_{1,M}\left(\frac{z}{\sqrt{4N}}\right):=\sum_{m=c+1}^{M}\frac{a_{f}(m)}{\log(m/c)}e^{\frac{2\pi im}{\sqrt{4N}}z}.\label{estimate estimate of P1,M is the key}
\end{equation}
Note that, as $M\rightarrow\infty$, $P_{1,M}\left(z/\sqrt{4N}\right)\rightarrow f_{1}\left(z/\sqrt{4N}\right)$.
Due to the choice of $\ell$, we have two possibilities: $\ell\geq c+1$
or $1\leq\ell\leq c$. By definition of $\ell:=1+\left[\frac{\sqrt{N}}{\pi\text{Im}(z)}\right]$,
$\text{Im}(z)\leq\frac{\sqrt{N}}{\pi c}$ in the first case and, in
the second, $\text{Im}(z)\geq\frac{\sqrt{N}}{\pi c}$.
Using partial summation and assuming first that $\ell\geq c+1$, we
find that 
\begin{align}
P_{1,M}\left(\frac{z}{\sqrt{4N}}\right)=\sum_{m=c+1}^{M}\frac{a_{f}(m)}{\log(m/c)}e^{\frac{2\pi im}{\sqrt{4N}}z} & =\sum_{m=c+1}^{\ell}\frac{a_{f}(m)}{\log(m/c)}e^{\frac{2\pi im}{\sqrt{4N}}z}+\sum_{m=\ell+1}^{M}\frac{a_{f}(m)}{\log(m/c)}e^{\frac{2\pi im}{\sqrt{4N}}z}\nonumber \\
=\sum_{m=c+1}^{\ell}\left\{ P_{m}\left(\frac{z}{\sqrt{4N}}\right)-P_{m-1}\left(\frac{z}{\sqrt{4N}}\right)\right\} \frac{1}{\log(m/c)}&+\sum_{m=\ell+1}^{M}\left\{ Q_{m}\left(\frac{z}{\sqrt{4N}}\right)-Q_{m-1}\left(\frac{z}{\sqrt{4N}}\right)\right\} \,\frac{1}{\log(m/c)}\nonumber \\
=\sum_{m=c+1}^{\ell-1}P_{m}\left(\frac{z}{\sqrt{4N}}\right)\left\{ \frac{1}{\log\left(m/c\right)}-\frac{1}{\log\left((m+1)/c\right)}\right\}  & +P_{\ell}\left(\frac{z}{\sqrt{4N}}\right)\,\frac{1}{\log(\ell/c)}-P_{c}\left(\frac{z}{\sqrt{4N}}\right)\frac{1}{\log\left((c+1)/c\right)}\nonumber \\
+\sum_{m=\ell}^{M-1}Q_{m}\left(\frac{z}{\sqrt{4N}}\right)\left\{ \frac{1}{\log\left(m/c\right)}-\frac{1}{\log\left((m+1)/c\right)}\right\}  & +Q_{M}\left(\frac{z}{\sqrt{4N}}\right)\,\frac{1}{\log\left(M/c\right)}-Q_{\ell}\left(\frac{z}{\sqrt{4N}}\right)\,\frac{1}{\log\left((\ell+1)/c\right)}.\label{first equation in lemmmmma}
\end{align}
Recalling that $P_{\ell}(z/\sqrt{4N})-Q_{\ell}(z/\sqrt{4N})=f\left(z/\sqrt{4N}\right)$,
we deduce
\begin{align}
P_{1,M}\left(\frac{z}{\sqrt{4N}}\right) & =\sum_{m=c+1}^{\ell-1}P_{m}\left(\frac{z}{\sqrt{4N}}\right)\left\{ \frac{1}{\log\left(m/c\right)}-\frac{1}{\log\left((m+1)/c\right)}\right\} +\frac{f\left(z/\sqrt{4N}\right)}{\log(\ell/c)}+\frac{Q_{M}\left(z/\sqrt{4N}\right)}{\log(M/c)}\nonumber \\
+Q_{\ell}\left(\frac{z}{\sqrt{4N}}\right) & \left\{ \frac{1}{\log(\ell/c)}-\frac{1}{\log\left((\ell+1)/c\right)}\right\} +\sum_{m=\ell}^{M-1}Q_{m}\left(\frac{z}{\sqrt{4N}}\right)\left\{ \frac{1}{\log\left(m/c\right)}-\frac{1}{\log\left((m+1)/c\right)}\right\}-\frac{P_{c}\left(z/\sqrt{4N}\right)}{\log\left((c+1)/c\right)}.\label{equation in lemma first case}
\end{align}
Analogously, if $1\leq\ell\leq c$, we have 
\begin{align}
P_{1,M}\left(\frac{z}{\sqrt{4N}}\right) & =\sum_{m=c+1}^{M}\frac{a_{f}(m)}{\log(m/c)}e^{\frac{2\pi im}{\sqrt{4N}}z}=\sum_{m=\ell+1}^{M}\left\{ Q_{m}\left(\frac{z}{\sqrt{4N}}\right)-Q_{m-1}\left(\frac{z}{\sqrt{4N}}\right)\right\} \,\frac{1}{\log(m/c)}\nonumber \\
=\sum_{m=c+1}^{M-1}Q_{m}\left(\frac{z}{\sqrt{4N}}\right) & \left\{ \frac{1}{\log\left(m/c\right)}-\frac{1}{\log\left((m+1)/c\right)}\right\} +Q_{M}\left(\frac{z}{\sqrt{4N}}\right)\,\frac{1}{\log\left(M/c\right)}-Q_{c}\left(\frac{z}{\sqrt{4N}}\right)\,\frac{1}{\log\left((c+1)/c\right)}.\label{assumption that l leq c in}
\end{align}
Taking $M\rightarrow\infty$ in both expressions (\ref{equation in lemma first case})
and (\ref{assumption that l leq c in}) and also invoking (\ref{estimate for QM}),
we see that
\begin{align}
f_{1}\left(\frac{z}{\sqrt{4N}}\right) & =\sum_{m=c+1}^{\ell-1}P_{m}\left(\frac{z}{\sqrt{4N}}\right)\left\{ \frac{1}{\log\left(m/c\right)}-\frac{1}{\log\left((m+1)/c\right)}\right\} +\frac{f\left(z/\sqrt{4N}\right)}{\log(\ell/c)}-P_{c}\left(\frac{z}{\sqrt{4N}}\right)\frac{1}{\log\left((c+1)/c\right)}\nonumber \\
 & +\sum_{m=\ell}^{\infty}\,Q_{m}\left(\frac{z}{\sqrt{4N}}\right)\,\left\{ \frac{1}{\log\left(m/c\right)}-\frac{1}{\log\left((m+1)/c\right)}\right\} +Q_{\ell}\left(\frac{z}{\sqrt{4N}}\right)\left\{ \frac{1}{\log(\ell/c)}-\frac{1}{\log\left((\ell+1)/c\right)}\right\} ,\label{f1 after taking M infinity}
\end{align}
whenever $\ell\geq c+1$, while
\begin{equation}
f_{1}\left(\frac{z}{\sqrt{4N}}\right)=\sum_{m=c+1}^{\infty}Q_{m}\left(\frac{z}{\sqrt{4N}}\right)\left\{ \frac{1}{\log\left(m/c\right)}-\frac{1}{\log\left((m+1)/c\right)}\right\} -Q_{c}\left(\frac{z}{\sqrt{4N}}\right)\,\frac{1}{\log\left((c+1)/c\right)},\label{bound M infinity 1 leq l leq c}
\end{equation}
if $1\le\ell\leq c$. 

The rest of the proof will consist in estimating the terms on the
right-hand sides of (\ref{f1 after taking M infinity}) and (\ref{bound M infinity 1 leq l leq c}).
We start by dealing with (\ref{f1 after taking M infinity}) and so we are under the hypothesis that $\ell\geq c+1$. Using
the mean value theorem for the function $F(x)=1/\log(x/c)$,
as well as the inequalities (\ref{estimate for PM!}) and (\ref{estimate for QM}),
we see that there exists some constant $C_{1}>0$ such that 
\begin{equation}
\left|f_{1}\left(\frac{z}{\sqrt{4N}}\right)\right|<C_{1}\left\{ \sum_{m=2}^{\infty}e^{-\frac{2\pi\text{Im}(z)}{\sqrt{4N}}(m+1)}\frac{m^{\frac{k}{2}-\frac{3}{4}}}{\log(m)}+\frac{\left|f\left(z/\sqrt{4N}\right)\right|}{\log(\ell/c)}+\frac{\left|Q_{\ell}\left(z/\sqrt{4N}\right)\right|}{\ell\log^{2}\left(\ell/c\right)}+\frac{\left|P_{c}\left(z/\sqrt{4N}\right)\right|}{\log\left((c+1)/c\right)}\right\} .\label{desired estimate at most equal at most approximate}
\end{equation}
If $\frac{\sqrt{N}}{\pi c}\geq\text{Im}(z)\geq\text{Im}(z_{0})>0$, then $P_{c}(z/\sqrt{4N})\ll e^{-2\pi\text{Im}(z)/ \sqrt{4N}}$. Thus, (\ref{desired estimate at most equal at most approximate}) gives
\begin{align}
\left|f_{1}\left(\frac{z}{\sqrt{4N}}\right)\right| & <C\,\left\{ \sum_{m=2}^{\infty}e^{-\frac{2\pi\text{Im}(z)}{\sqrt{4N}}(m+1)}\frac{m^{\frac{k}{2}-\frac{3}{4}}}{\log(m)}+\frac{\left|f\left(z/\sqrt{4N}\right)\right|}{\log(\ell/c)}+\frac{\left|Q_{\ell}\left(z/\sqrt{4N}\right)\right|}{\ell\log^{2}\left(\ell/c\right)}\right\} \nonumber \\
&\leq C\,\left\{ e^{-\frac{2\pi\text{Im}(z)}{\sqrt{4N}}}\sum_{m=2}^{\infty}e^{-\frac{2\pi\text{Im}(z)}{\sqrt{4N}}m}m^{\frac{k}{2}-\frac{3}{4}}+\frac{\left|f\left(z/\sqrt{4N}\right)\right|}{\log(\ell/c)}+\frac{\left|Q_{\ell}\left(z/\sqrt{4N}\right)\right|}{\ell\log^{2}\left(\ell/c\right)}\right\}\nonumber\\
&<C^{\prime}\,\left\{ \frac{e^{-\frac{2\pi\text{Im}(z)}{\sqrt{4N}}}}{\text{Im}(z)^{\frac{k}{2}+\frac{1}{4}}}+\left(\log\left(\frac{\sqrt{N}}{\pi\text{Im}(z)}\right)\right)^{-1}\left|f\left(\frac{z}{\sqrt{4N}}\right)\right|+\frac{\text{Im}(z)\left|Q_{\ell}\left(\frac{z}{\sqrt{4N}}\right)\right|}{\sqrt{N}}\left(\log\left(\frac{\sqrt{N}}{\pi\text{Im}(z)}\right)\right)^{-2}\right\} \nonumber\\
&<D\,e^{-\frac{2\pi\text{Im}(z)}{\sqrt{4N}}},\label{inequality trivial}
\end{align}
for some positive $D$. At the final step we have used the fact that
$\ell>\frac{\sqrt{N}}{\pi\text{Im}(z)}$, as well as the elementary estimate
\begin{equation}
\sum_{m=2}^{\infty}m^{h-1}e^{-m\alpha}=O\left(\alpha^{-h}\right),\,\,\,\,h,\alpha>0.\label{first bound for elementary sum gamma}
\end{equation}
Moreover, the bounds for $\left|f\left(z/\sqrt{4N}\right)\right|$ and 
$\left|Q_{\ell}(z/\sqrt{4N})\right|$
are obtained using the elementary observation that, for $\frac{\sqrt{N}}{\pi c}\geq\text{Im}(z)\geq\text{Im}(z_{0})>0$,
\begin{align}
\left|f\left(\frac{z}{\sqrt{4N}}\right)\right| & \leq e^{-\frac{2\pi\text{Im}(z)}{\sqrt{4N}}}\,\sum_{n=1}^{\infty}|a_{f}(n)|\,e^{-\frac{2\pi\text{Im}(z)}{\sqrt{4N}}(n-1)}\nonumber \\
 & \leq e^{-\frac{2\pi\text{Im}(z)}{\sqrt{4N}}}\,\sum_{n=1}^{\infty}|a_{f}(n)|\,e^{-\frac{2\pi\text{Im}(z_{0})}{\sqrt{4N}}(n-1)}=O\left(e^{-\frac{2\pi\text{Im}(z)}{\sqrt{4N}}}\right).\label{bound elementary for f}
\end{align}
Analogously, for the case where $1\leq\ell\leq c$ we already know
that $\text{Im}(z)\geq\frac{\sqrt{N}}{\pi c}$, so we can bound (\ref{bound M infinity 1 leq l leq c})
using (\ref{bound elementary for f}) with $\text{Im}(z_{0})$ replaced
by $\frac{\sqrt{N}}{\pi c}$. Therefore, (\ref{inequality trivial})
holds also in the second case $1\leq\ell\leq c$. This gives our first
bound (\ref{inequality large Im(z) for cusp f1}).

\bigskip{}

We will now get (\ref{first bound in fact}). From now on we shall
assume that $0<\text{Im}(z)<\min\left\{ \frac{\sqrt{N}}{3\pi},\frac{\sqrt{N}}{\pi c}\right\} $ and our goal will be to prove that the estimate
\begin{equation}
f_{1}\left(\frac{z}{\sqrt{4N}}\right)\ll\frac{1}{\text{Im}(z)^{\frac{k}{2}+\frac{1}{4}}}\,\left(\log\left(\frac{1}{\text{Im}(z)}\right)\right)^{-1}\label{bound in the middle of the proof}
\end{equation}
holds in this range. By the hypothesis $0<\text{Im}(z)<\min\left\{ \frac{\sqrt{N}}{3\pi},\frac{\sqrt{N}}{\pi c}\right\} $,
we know that $\ell\geq\max\{4,c+1\}$ by definition of $\ell$.
Thus, in order to prove (\ref{bound in the middle of the proof}),
we will use the expression for $f_{1}(z/\sqrt{4N})$ holding for $\ell\geq c+1$, which is (\ref{f1 after taking M infinity}). Recalling the starting point (\ref{desired estimate at most equal at most approximate}) and some of the steps given in (\ref{inequality trivial}), we know that $f_{1}(z/\sqrt{4N})$ can be bounded in the form
\begin{align}
\left|f_{1}\left(\frac{z}{\sqrt{4N}}\right)\right|\ll e^{-\frac{2\pi\text{Im}(z)}{\sqrt{4N}}}\sum_{m=2}^{\infty}e^{-\frac{2\pi\text{Im}(z)}{\sqrt{4N}}m}m^{\frac{k}{2}-\frac{3}{4}}+\left(\log\left(\frac{\sqrt{N}}{\pi\text{Im}(z)}\right)\right)^{-1}\left|f\left(\frac{z}{\sqrt{4N}}\right)\right|\nonumber\\
+\frac{\text{Im}(z)}{\sqrt{N}}\left(\log\left(\frac{\sqrt{N}}{\pi\text{Im}(z)}\right)\right)^{-2}\left|Q_{\ell}(z/\sqrt{4N})\right|+\frac{\left|P_{c}\left(z/\sqrt{4N}\right)\right|}{\log\left((c+1)/c\right)} .\label{starting again with inequality for small Im(z) argument}
\end{align}
From now on, we shall proceed by estimating each term individually. First, by (\ref{DEFINITION PM}),
$P_{c}\left(z/\sqrt{4N}\right)=O(1)$ in the range $0<\text{Im}(z)<\min\left\{ \frac{\sqrt{N}}{3\pi},\frac{\sqrt{N}}{\pi c}\right\} $.
Furthermore, using the uniform bound (\ref{uniform bound for cusp forms})
for $f\left(z/\sqrt{4N}\right)$, we know that
\begin{equation}
\left(\log\left(\frac{\sqrt{N}}{\pi\text{Im}(z)}\right)\right)^{-1}\left|f\left(\frac{z}{\sqrt{4N}}\right)\right|\ll\frac{1}{\text{Im}(z)^{\frac{k}{2}+\frac{1}{4}}}\left(\log\left(\frac{1}{\text{Im}(z)}\right)\right)^{-1}.\label{Trivial bound on f log}
\end{equation}
In the meantime, in order to treat $|Q_{\ell}\left(z/\sqrt{4N}\right)|$, we use (\ref{definition QM}) and (\ref{uniform bound for cusp forms}) to obtain
\begin{align*}
\left|Q_{\ell}\left(\frac{z}{\sqrt{4N}}\right)\right| & \leq\left|f\left(\frac{z}{\sqrt{4N}}\right)\right|+\left|P_{\ell}\left(\frac{z}{\sqrt{4N}}\right)\right|\leq\frac{\mathcal{A}}{\text{Im}(z)^{\frac{k}{2}+\frac{1}{4}}}+\mathcal{B}_{1}\ell^{\frac{k}{2}+\frac{1}{4}}\log(\ell)\,e^{-\frac{2\pi\text{Im}(z)}{\sqrt{4N}}(\ell+1)}\\
 & \leq\frac{\mathcal{A}}{\text{Im}(z)^{\frac{k}{2}+\frac{1}{4}}}+\frac{\mathcal{B}_{2}}{\text{Im}(z)^{\frac{k}{2}+\frac{1}{4}}}\log\left(\frac{1}{\text{Im}(z)}\right),
\end{align*}
which, in its turn, provides the estimate
\begin{align}
\frac{\text{Im}(z)}{\sqrt{N}}\left(\log\left(\frac{\sqrt{N}}{\pi\text{Im}(z)}\right)\right)^{-2}\left|Q_{\ell}\left(\frac{z}{\sqrt{4N}}\right)\right| & \ll\left(\log\left(\frac{1}{\text{Im}(z)}\right)\right)^{-2}\left\{ \frac{1}{\text{Im}(z)^{\frac{k}{2}-\frac{3}{4}}}+\frac{1}{\text{Im}(z)^{\frac{k}{2}-\frac{3}{4}}}\log\left(\frac{1}{\text{Im}(z)}\right)\right\} \nonumber \\
 & \ll\frac{1}{\text{Im}(z)^{\frac{k}{2}+\frac{1}{4}}}\left(\log\left(\frac{1}{\text{Im}(z)}\right)\right)^{-1}.\label{Bound Ql}
\end{align}
If we now return to (\ref{starting again with inequality for small Im(z) argument})
and combine (\ref{Trivial bound on f log}) with (\ref{Bound Ql}),
we deduce that 
\begin{equation}
f_{1}\left(\frac{z}{\sqrt{4N}}\right)=O\left(\sum_{m=2}^{\infty}e^{-\frac{2\pi\text{Im}(z)}{\sqrt{4N}}m}\frac{m^{\frac{k}{2}-\frac{3}{4}}}{\log(m)}\right)+O\left(\frac{1}{\text{Im}(z)^{\frac{k}{2}+\frac{1}{4}}}\left(\log\left(\frac{1}{\text{Im}(z)}\right)\right)^{-1}\right)+O(1).\label{intermediate estimate}
\end{equation}
The second $O$ term already contains the desired bound (\ref{bound in the middle of the proof}). We shall now estimate the first $O$ term containing the infinite series
by using (\ref{first bound for elementary sum gamma}), from which we deduce
\begin{align}
\sum_{m=2}^{\infty}e^{-\frac{2\pi\text{Im}(z)}{\sqrt{4N}}m}\frac{m^{\frac{k}{2}-\frac{3}{4}}}{\log(m)} & =\sum_{m=2}^{\ell}e^{-\frac{2\pi\text{Im}(z)}{\sqrt{4N}}m}\frac{m^{\frac{k}{2}-\frac{3}{4}}}{\log(m)}+\sum_{m=\ell+1}^{\infty}e^{-\frac{2\pi\text{Im}(z)}{\sqrt{4N}}m}\frac{m^{\frac{k}{2}-\frac{3}{4}}}{\log(m)}\nonumber \\
 & <\sum_{m=2}^{\ell}e^{-\frac{2\pi\text{Im}(z)}{\sqrt{4N}}m}\frac{m^{\frac{k}{2}-\frac{3}{4}}}{\log(m)}+\frac{1}{\log(\ell)}\sum_{m=\ell+1}^{\infty}m^{\frac{k}{2}-\frac{3}{4}}e^{-\frac{2\pi\text{Im}(z)}{\sqrt{4N}}m}\nonumber \\
 & <\sum_{m=2}^{\ell}e^{-\frac{2\pi\text{Im}(z)}{\sqrt{4N}}m}\frac{m^{\frac{k}{2}-\frac{3}{4}}}{\log(m)}+O\left\{ \left(\log\left(\frac{\sqrt{N}}{\pi\text{Im}(z)}\right)\right)^{-1}\text{Im}(z)^{-\frac{k}{2}-\frac{1}{4}}\right\} \nonumber \\
 & =\sum_{m=2}^{\ell}e^{-\frac{2\pi\text{Im}(z)}{\sqrt{4N}}m}\frac{m^{\frac{k}{2}-\frac{3}{4}}}{\log(m)}+O\left\{ \left(\log\left(\frac{1}{\text{Im}(z)}\right)\right)^{-1}\text{Im}(z)^{-\frac{k}{2}-\frac{1}{4}}\right\}. \label{O term only one term}
\end{align}
The second term of (\ref{O term only one term}) already contains what we want, this is, (\ref{bound in the middle of the proof}). Therefore, if we show that a similar estimate takes place for the finite sum in
(\ref{O term only one term}), we are done. This final estimate will depend
on the range of $k$, for which we have two possibilities: $k=1$
or $k\geq2$.
\begin{enumerate}
\item If $k=1$, we need to sum over the expression $\frac{1}{m^{\frac{1}{4}}\log(m)}$.
Since the real function $f(x)=\frac{1}{x^{1/4}\log(x)}$ is steadily
decreasing for $x\geq2$, we have
\begin{align}
\sum_{m=2}^{\ell}e^{-\frac{2\pi\text{Im}(z)}{\sqrt{4N}}m}\frac{m^{-\frac{1}{4}}}{\log(m)} & <\sum_{m=2}^{\ell}\frac{m^{-\frac{1}{4}}}{\log(m)}<\frac{2^{-\frac{1}{4}}}{\log(2)}+\intop_{2}^{\ell}\frac{x^{-1/4}}{\log(x)}\,dx\nonumber \\
=\frac{2^{-\frac{1}{4}}}{\log(2)} & +\intop_{2}^{\sqrt{\ell}}\frac{x^{-1/4}}{\log(x)}\,dx+\intop_{\sqrt{\ell}}^{\ell}\frac{x^{-1/4}}{\log(x)}\,dx\nonumber \\
<\frac{2^{-\frac{1}{4}}}{\log(2)} & +\frac{4\ell^{\frac{3}{8}}}{3\log(2)}+\frac{8\ell^{\frac{3}{4}}}{3\log(\ell)}<\frac{4\ell^{\frac{3}{8}}}{3\log(2)}+\frac{16\ell^{\frac{3}{4}}}{3\log(\ell)}.\label{first simple inequality}
\end{align}
The real function $h(x):=x^{-\frac{3}{8}}\log(x)$ has a maximum equal
to $\frac{8}{3e}$ which is attained at the point $x=e^{\frac{8}{3}}$.
Thus, for every $\ell\geq4$, 
\begin{equation}
\ell^{-\frac{3}{8}}<\frac{16}{3e\,\log(\ell)}.\label{existence constant}
\end{equation}
Using (\ref{existence constant}) in (\ref{first simple inequality}),
one finds that 
\begin{equation}
\sum_{m=2}^{\ell}e^{-\frac{2\pi\text{Im}(z)}{\sqrt{4N}}m}\frac{m^{-\frac{1}{4}}}{\log(m)}<\frac{4\ell^{\frac{3}{8}}}{3\log(2)}+\frac{16\ell^{\frac{3}{4}}}{3\log(\ell)}<\left(1+\frac{4}{3\log(2)}\right)\frac{16\,\ell^{\frac{3}{4}}}{3\log(\ell)}:=\frac{A_{1}}{\log(\ell)}\ell^{\frac{3}{4}}.\label{final thing small guy}
\end{equation}
Recalling once more that $\ell:=1+\left[\frac{\sqrt{N}}{\pi\text{Im}(z)}\right]$,
(\ref{final thing small guy}) yields
\begin{equation}
\sum_{m=2}^{\ell}e^{-\frac{2\pi\text{Im}(z)}{\sqrt{4N}}m}\frac{m^{-\frac{1}{4}}}{\log(m)}<\frac{A_{1}}{\log(\ell)}\ell^{\frac{3}{4}}=O\left(\frac{1}{\text{Im}(z)^{\frac{3}{4}}}\left\{ \log\left(\frac{1}{\text{Im}(z)}\right)\right\} ^{-1}\right),\label{form k=00003D1}
\end{equation}
which proves (\ref{bound in the middle of the proof}) for $k=1$.
\item For $k\geq2$, it is even easier to get an estimate of the form (\ref{form k=00003D1}).
Indeed,
\begin{align*}
\sum_{m=2}^{\ell}e^{-\frac{2\pi\text{Im}(z)}{\sqrt{4N}}m}\frac{m^{\frac{k}{2}-\frac{3}{4}}}{\log(m)} & <\sum_{m=2}^{\ell}\frac{m^{\frac{k}{2}-\frac{3}{4}}}{\log(m)}=\sum_{m=2}^{\sqrt{\ell}}\frac{m^{\frac{k}{2}-\frac{3}{4}}}{\log(m)}+\sum_{m=\sqrt{\ell}+1}^{\ell}\frac{m^{\frac{k}{2}-\frac{3}{4}}}{\log(m)}\\
 & <\frac{\ell^{\frac{k}{4}+\frac{1}{8}}}{\log(2)}+2\frac{\ell^{\frac{k}{2}+\frac{1}{4}}}{\log(\ell)}.
\end{align*}
As in (\ref{existence constant}), we know that $h(x):=x^{-\frac{k}{4}-\frac{1}{8}}\log(x)$
has a maximum equal to $\frac{8}{(2k+1)e}$, which is attained at the point $x=e^{\frac{8}{2k+1}}$. Therefore, for every $\ell\geq4$, the inequality
takes place
\[
\ell^{-\frac{k}{4}-\frac{1}{8}}<\frac{16}{(2k+1)e\,\log(\ell)},
\]
and so, since $\ell=1+\left[\frac{\sqrt{N}}{\pi\text{Im}(z)}\right]$,
\begin{align*}
\sum_{m=2}^{\ell}e^{-\frac{2\pi\text{Im}(z)}{\sqrt{4N}}m}\frac{m^{\frac{k}{2}-\frac{3}{4}}}{\log(m)} & <\frac{\ell^{\frac{k}{4}+\frac{1}{8}}}{\log(2)}+2\frac{\ell^{\frac{k}{2}+\frac{1}{4}}}{\log(\ell)}<\left(1+\frac{8}{(2k+1)e\log(2)}\right)\,\frac{2\ell^{\frac{k}{2}+\frac{1}{4}}}{\log(\ell)}\\
 & =O\left(\frac{1}{\text{Im}(z)^{\frac{k}{2}+\frac{1}{4}}}\left\{ \log\left(\frac{1}{\text{Im}(z)}\right)\right\} ^{-1}\right),
\end{align*}
which finally proves (\ref{bound in the middle of the proof}).
\end{enumerate}
\end{proof}

\begin{remark}
In the same lines as those of Remark \ref{fricke remark}, it is clear from the properties of the Fricke involution that, if
we consider
\[
\left(f|W_{4N}\right)_{1}\left(\frac{z}{\sqrt{4N}}\right):=\sum_{n=d+1}^{\infty}\frac{a_{f|W_{4N}}(n)}{\log(n/d)}\,e^{\frac{2\pi in}{\sqrt{4N}}z},
\]
then, for small $\text{Im}(z)$, $\left(f|W_{4N}\right)_{1}\left(z\right)$
obeys to the estimate
\[
\left(f|W_{4N}\right)_{1}\left(\frac{z}{\sqrt{4N}}\right)\ll\frac{1}{\text{Im}(z)^{\frac{k}{2}+\frac{1}{4}}}\,\left(\log\left(\frac{1}{\text{Im}(z)}\right)\right)^{-1}.
\]
\end{remark}

\begin{remark} \label{sarnak remark}
An alternative proof of the above lemma, which is short but invokes
Fourier analysis, is given for $L-$functions associated with Maass
cusp forms in {[}\cite{maass_forms_zeros}, pp.126-127{]}. Finding a bound for
(\ref{definition f1}) is equivalent to finding a bound for
\[
f_{1}\left(z\right):=\sum_{n=c+1}^{\infty}\frac{a_{f}(n)}{\log(n/c)}\,e^{2\pi inz}.
\]
It is clear that $f_{1}(x+iy)$ can be written in terms of the convolution
\[
f_{1}(x+iy)=\intop_{0}^{1}f\left(t+iy\right)\mathcal{H}(x-t)\,dt,
\]
where
\begin{equation}
\mathcal{H}(x):=\sum_{n=c+1}^{\infty}\frac{\cos(2\pi nx)}{\log\left(n/c\right)}.\label{fourier seeries convex}
\end{equation}
The convexity of the sequence $(b_{n})_{n\geq c+1}:=\frac{1}{\log(n/c)}$ assures that $\mathcal{H}\in L^{1}\left([0,2\pi)\right)$ {[}\cite{katznelson},
p.23{]}. Hence, the proof of lemma \ref{estimate small z} follows from an estimate of the $L^{1}$
modulus of continuity of $\mathcal{H}(x)$ combined with an estimate
of the form (\ref{bound in lemma}). See {[}\cite{maass_forms_zeros}, p.127{]}
for details.
\end{remark}

\section{Proof of Theorem \ref{theorem 1.2}}
As is customary, any variant of the Hardy-Littlewood method starts
by taking a positive number $H$ (to be chosen at a later stage of
the proof), some $0<\epsilon<\frac{\pi}{2}$ (to be very small later), and by considering the integrals
(with $t\in\mathbb{R}$)
\begin{equation}
I(t):=\intop_{t}^{t+H}R_{f}(u)e^{\left(\frac{\pi}{2}-\epsilon\right)u}\,du\label{I(t) def}
\end{equation}
and
\begin{equation}
J(t):=\intop_{t}^{t+H}\left|R_{f}(u)\right|e^{\left(\frac{\pi}{2}-\epsilon\right)u}du.\label{J(t) def}
\end{equation}
The idea of the proof is to contrast the behavior of the integrals
$I(t)$ and $J(t)$ as $t\rightarrow\infty$.

We start the argument by estimating $I(t)$.
\\

\subsection{Estimating $I(t)$}

The estimate of $I(t)$ is done via the representation (\ref{as fourier transform almost}),
in which we consider $\xi$ real, pick $0<\epsilon<\frac{\pi}{2}$ and take $z=-\exp\left\{ \xi-i\epsilon\right\} $ (which clearly satisfies
the condition $\text{Im}(z)>0$). We can therefore rewrite (\ref{as fourier transform almost})
as the Fourier transform 
\begin{align}
\frac{1}{\sqrt{2\pi}}\intop_{-\infty}^{\infty}R_{f}(t)e^{\left(\frac{\pi}{2}-\epsilon\right)t}\,e^{-i\xi t}\,dt=\sqrt{\frac{\pi}{2}}\,\exp\left\{ \left(\frac{k}{2}+\frac{1}{4}\right)\xi+i\left(\frac{k}{2}+\frac{1}{4}\right)\left(\frac{\pi}{2}-\epsilon\right)\right\}\nonumber\\
\times\left[f\left(-\frac{\exp\left\{ \xi-i\epsilon\right\} }{\sqrt{4N}}\right)+\left(f|W_{4N}\right)\left(-\frac{\exp\left\{ \xi-i\epsilon\right\} }{\sqrt{4N}}\right)\right].\label{first fourier transform}
\end{align}
From the Phragm\'en-Lindel\"of principle (\ref{estimate convex for cusp form L function})
and Stirling's formula (\ref{preliminary stirling}), we know that,
as $|t|\rightarrow\infty$,
\begin{equation}
R_{f}(t)e^{\left(\frac{\pi}{2}-\epsilon\right)t}=O\left(|t|^{A}\,\exp\left\{ -\frac{\pi}{2}\left(|t|-t\right)-\epsilon t\right\} \right),\label{integrable condition}
\end{equation}
for some $A>0$. This estimate proves that $|R_{f}(t)|\,e^{\left(\frac{\pi}{2}-\epsilon\right)t}$
and $|R_{f}(t)|^{2}\,e^{\left(\pi-2\epsilon\right)t}$ are integrable
over $\mathbb{R}$ and the same must be true for the function $I(t)$
defined by (\ref{I(t) def}), as well as to $|I(t)|$ and $|I(t)|^{2}$.
An integration by parts gives the Fourier transform
\begin{align}
\hat{I}(\xi) & :=\frac{1}{\sqrt{2\pi}}\,\intop_{-\infty}^{\infty}I(t)\,e^{-i\xi t}dt=\frac{1}{\sqrt{2\pi}}\,\intop_{-\infty}^{\infty}e^{-i\xi t}\,\intop_{t}^{t+H}R_{f}(u)\,e^{\left(\frac{\pi}{2}-\epsilon\right)u}\,du\,dt\nonumber \\
 & =\frac{1}{i\xi\,\sqrt{2\pi}}\,\intop_{-\infty}^{\infty}e^{-i\xi t}\,\left\{ R_{f}(t+H)\,e^{\left(\frac{\pi}{2}-\epsilon\right)(t+H)}-R_{f}(t)\,e^{\left(\frac{\pi}{2}-\epsilon\right)t}\right\} \,dt\nonumber \\
 & =\sqrt{\frac{\pi}{2}}\,\frac{e^{i\xi H}-1}{i\xi}\,\exp\left\{ \left(\frac{k}{2}+\frac{1}{4}\right)\xi+i\left(\frac{k}{2}+\frac{1}{4}\right)\left(\frac{\pi}{2}-\epsilon\right)\right\} \left[f\left(-\frac{\exp\left\{ \xi-i\epsilon\right\} }{\sqrt{4N}}\right)+\left(f|W_{4N}\right)\left(-\frac{\exp\left\{ \xi-i\epsilon\right\} }{\sqrt{4N}}\right)\right],\label{computation Fourier trans of I(t)}
\end{align}
and so, by Parseval's formula, 
\begin{equation}
\intop_{-\infty}^{\infty}|I(t)|^{2}dt=2\pi\intop_{-\infty}^{\infty}\frac{\sin^{2}\left(\frac{\xi H}{2}\right)}{\xi^{2}}e^{\left(k+\frac{1}{2}\right)\xi}\,\left|f\left(-\frac{\exp\left\{ \xi-i\epsilon\right\} }{\sqrt{4N}}\right)+\left(f|W_{4N}\right)\left(-\frac{\exp\left\{ \xi-i\epsilon\right\} }{\sqrt{4N}}\right)\right|^{2}d\xi.\label{After parseeeval}
\end{equation}
We now give an estimate for the latter integral: invoking the uniform bound for cusp forms (\ref{uniform bound for cusp forms}),
we know that there exist two positive constants $A_{1}$ and $A_{2}$
such that
\begin{equation}
\left|f\left(-\frac{\exp\left\{ \xi-i\epsilon\right\} }{\sqrt{4N}}\right)\right|\leq\frac{A_{1}}{\sin^{\frac{k}{2}+\frac{1}{4}}(\epsilon)}e^{-\left(\frac{k}{2}+\frac{1}{4}\right)\xi}\label{bound first after Parseval}
\end{equation}
and
\begin{equation}
\left|\left(f|W_{4N}\right)\left(-\frac{\exp\left\{ \xi-i\epsilon\right\} }{\sqrt{4N}}\right)\right|\leq\frac{A_{2}}{\sin^{\frac{k}{2}+\frac{1}{4}}(\epsilon)}e^{-\left(\frac{k}{2}+\frac{1}{4}\right)\xi}.\label{bound second after parseval}
\end{equation}
Supposing, without any loss of generality, that $A_{1}\geq A_{2}$ and returning to (\ref{bound first after Parseval}), we find the mean inequality for $I(t)$,
\begin{equation}
\intop_{-\infty}^{\infty}|I(t)|^{2}dt<\frac{8\pi A_{1}^{2}}{\sin^{k+\frac{1}{2}}(\epsilon)}\,\intop_{-\infty}^{\infty}\frac{\sin^{2}\left(\frac{\xi H}{2}\right)}{\xi^{2}}d\xi<K_{1}\epsilon^{-k-\frac{1}{2}}H,\label{Final estimate for I(t)^2}
\end{equation}
where we have used the fact that $0<\epsilon<\frac{\pi}{2}$ and invoked
Jordan's inequality, $\sin(\epsilon)>\frac{2}{\pi}\epsilon$. The constant
$K_{1}$ is, of course, absolute, not depending on $H$ or $\epsilon$.

\subsection{Estimating $J(t)$}

Now, let us define the Dirichlet series associated with $R_{f}(t)$ (\ref{Rf(t) with Fricke}),
\[
\varphi_{f}(s):=\frac{1}{2}\left(L(s,f)+L(s,f|W_{4N})\right)=\frac{1}{2}\sum_{n=1}^{\infty}\frac{a_{f}(n)+a_{f|W_{4N}}(n)}{n^{s}}:=\sum_{n=1}^{\infty}\frac{\alpha_{f}(n)}{n^{s}},
\]
and assume that $r$ is the smallest positive integer for which $\alpha_{f}(r)\neq0$.
We will apply the reasoning of Lemma \ref{analytic continuation integral} to the Dirichlet series
\[
\varphi_{f}^{\star}(s):=r^{s}\varphi_{f}(s)-\alpha_{f}(r).
\]
From Stirling's formula, we know that there exist $T_{1},K_{2}>0$
such that
\[
|R_{f}(t)|>K_{2}\,|t|^{\frac{k}{2}-\frac{1}{4}}\,e^{-\frac{\pi}{2}|t|}\,\left|\varphi_{f}\left(\frac{k}{2}+\frac{1}{4}+it\right)\right|,\,\,\,\,\,|t|\geq T_{1}.
\]
With the construction of this number $T_{1}$, let 
\begin{equation}
T>T_{0}:=\max\left\{ 1,6H,T_{1}\right\},\label{choice of T0!}
\end{equation}
and assume that $t\in [T,2T]$ (recall that this is the argument of $I(t)$ and $J(t)$). If we now take the choice $\epsilon=T^{-1}$ in our integrals (\ref{I(t) def}) and (\ref{J(t) def}), the condition $t\leq u\leq t+H$
implies that $\epsilon u<\epsilon(2T+H)<3$ and $u^{\frac{k}{2}-\frac{1}{4}}>T^{\frac{k}{2}-\frac{1}{4}}$.
Consequently, we find the inequality
\begin{align}
J(t) & =\intop_{t}^{t+H}\left|R_{f}(u)\right|e^{\left(\frac{\pi}{2}-\epsilon\right)u}du>K^{\prime}\,T^{\frac{k}{2}-\frac{1}{4}}\,\intop_{t}^{t+H}\left|\varphi_{f}\left(\frac{k}{2}+\frac{1}{4}+iu\right)\right|\,du\nonumber \\
 & =K^{\prime}\,T^{\frac{k}{2}-\frac{1}{4}}\,\intop_{t}^{t+H}\left|\alpha_{f}(r)\,r^{-\frac{k}{2}-\frac{1}{4}-iu}+r^{-\frac{k}{2}-\frac{1}{4}-iu}\varphi_{f}^{\star}\left(\frac{k}{2}+\frac{1}{4}+iu\right)\right|\,du\nonumber \\
 & =K^{\prime}|\alpha_{f}(r)|\,r^{-\frac{k}{2}-\frac{1}{4}}\,T^{\frac{k}{2}-\frac{1}{4}}\,\intop_{t}^{t+H}\left|1+\frac{1}{\alpha_{f}(r)}\varphi_{f}^{\star}\left(\frac{k}{2}+\frac{1}{4}+iu\right)\right|\,du\nonumber \\
 & \geq K^{\prime}|\alpha_{f}(r)|\,r^{-\frac{k}{2}-\frac{1}{4}}\,T^{\frac{k}{2}-\frac{1}{4}}\,\left\{ H+\text{Re}\intop_{t}^{t+H}\frac{1}{\alpha_{f}(r)}\varphi_{f}^{\star}\left(\frac{k}{2}+\frac{1}{4}+iu\right)\,du\right\} \nonumber \\
 & =K_{3}\,T^{\frac{k}{2}-\frac{1}{4}}\left\{ H+\text{Re}\intop_{t}^{t+H}\frac{1}{\alpha_{f}(r)}\varphi_{f}^{\star}\left(\frac{k}{2}+\frac{1}{4}+iu\right)\,du\right\} .\label{estimate lower bound J(t)}
\end{align}
Analogously to $L(s,f_{1})$ in (\ref{Dirichlet series for log}),
we can construct the Dirichlet series
\begin{equation}
\psi_{f}(s):=\sum_{n=r+1}^{\infty}\frac{\alpha_{f}(n)}{\log(n/r)\,n^{s}},\,\,\,\,\,\text{Re}(s)>\frac{k}{2}+\frac{5}{4},\label{psi f def}
\end{equation}
and, due to Lemma \ref{analytic continuation integral}, say that $\psi_{f}(s)$ can be continued
to the whole complex plane as an entire function which satisfies
\begin{equation}
r^{-s}\intop_{s}^{\infty}\varphi_{f}^{\star}(z)\,dz=\psi_{f}(s).\label{int representation psi f}
\end{equation}
Employing (\ref{int representation psi f}), we find the representation
\begin{equation}
\frac{1}{\alpha_{f}(r)}\intop_{t}^{t+H}\varphi_{f}^{\star}\left(\frac{k}{2}+\frac{1}{4}+iu\right)\,du=-\frac{ir^{\frac{k}{2}+\frac{1}{4}}}{\alpha_{f}(r)}\left\{ r^{it}\psi_{f}\left(\frac{k}{2}+\frac{1}{4}+it\right)-r^{i(t+H)}\psi_{f}\left(\frac{k}{2}+\frac{1}{4}+i(t+H)\right)\right\} .\label{final eq}
\end{equation}

Let $\Psi(t)$ denote the right-hand side of (\ref{final eq}): then
(\ref{estimate lower bound J(t)}) says that
\begin{equation}
J(t)\geq K_{3}\,T^{\frac{k}{2}-\frac{1}{4}}\left\{ H+\text{Re}\,\Psi(t)\right\} \label{final estimate for J(t) just before claim},
\end{equation}
with $K_{3}$ not depending on any of the parameters $H,$ $t$ or $T$. If, at last, we deduce a suitable estimate for $\intop_{T}^{2T}|\Psi(t)|^{2}dt$,
we can finish the application of the Hardy-Littlewood method. The
next claim contains the necessary fact to conclude the proof of our
result.

\begin{claim} \label{our claim}
For large $T$, there exists some absolute constant $K_{4}>0$ such
that
\begin{equation}
\intop_{T}^{2T}|\Psi(t)|^{2}dt\leq K_{4}\,T.\label{mean value Psi}
\end{equation}
\end{claim}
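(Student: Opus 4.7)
The plan is to reduce the claim to a mean-square bound for $\psi_f$ on the critical line and then extract that bound via the integral representation of $\psi_f$ in terms of $F_1$. From the explicit form of $\Psi(t)$ in (\ref{final eq}) and the inequality $|a-b|^2 \le 2(|a|^2+|b|^2)$,
\begin{equation*}
|\Psi(t)|^2 \le \frac{2\,r^{k+\frac{1}{2}}}{|\alpha_{f}(r)|^2}\bigl(|\psi_{f}(\sigma+it)|^2 + |\psi_{f}(\sigma+i(t+H))|^2\bigr),
\end{equation*}
with $\sigma = \tfrac{k}{2}+\tfrac{1}{4}$, and since $H \le T$ by (\ref{choice of T0!}) it suffices to show that $\int_{T}^{3T}|\psi_{f}(\sigma+iu)|^2\,du \ll T$.

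The natural tool is the Mellin identity
\begin{equation*}
\bigl(2\pi/\sqrt{4N}\bigr)^{-s}\,\Gamma(s)\,\psi_{f}(s) \;=\; \int_{0}^{\infty} F_{1}\!\left(\frac{iy}{\sqrt{4N}}\right) y^{s-1}\,dy,
\end{equation*}
where $F_{1}$ is defined exactly as in (\ref{definition f1}) but with the coefficients $\alpha_{f}(n) = \tfrac{1}{2}(a_{f}(n)+a_{f|W_{4N}}(n))$, and the formula is extended to $\text{Re}(s)=\sigma$ by analytic continuation. The remark after Lemma \ref{estimate small z} shows $F_{1}$ decays exponentially at infinity and satisfies $F_{1}(iy/\sqrt{4N}) \ll y^{-\sigma}(\log(1/y))^{-1}$ as $y \to 0^{+}$ — the $\log^{-1}$ factor here is precisely what makes $\int_{0}^{\infty}|F_{1}(iy/\sqrt{4N})|^2\,y^{2\sigma-1}\,dy$ convergent and, after the substitution $y = e^{-u}$, converts the Mellin transform into a Fourier transform whose Parseval identity yields $\int|\Gamma(\sigma+it)\,\psi_{f}(\sigma+it)|^2\,dt < \infty$.

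To localize this to $[T,3T]$, I would use the contour-shift identity
\begin{equation*}
\psi_{f}(\sigma+it) \;=\; r^{-\sigma-it}\int_{\sigma}^{\sigma_{0}}\varphi_{f}^{\star}(x+it)\,dx \;+\; r^{\sigma_{0}-\sigma}\,\psi_{f}(\sigma_{0}+it),
\end{equation*}
with $\sigma_{0} > \tfrac{k}{2}+\tfrac{5}{4}$. The second piece is $O(1)$ uniformly and contributes $O(T)$. For the first piece, combining the coefficient expansion of $\varphi_{f}^{\star}$ with the $x$-integration turns it into a generalized Dirichlet series with frequencies $\log(n/r)$, whose mean square on $[T,3T]$ is amenable to a Montgomery–Vaughan-type estimate; the diagonal term is
\begin{equation*}
T\sum_{n\ne r}\frac{|\alpha_{f}(n)|^2\bigl((r/n)^{\sigma}-(r/n)^{\sigma_{0}}\bigr)^2}{\log^2(n/r)} \;\ll\; T,
\end{equation*}
convergence being ensured by $\sum_{m\le M}|\alpha_{f}(m)|^2 \ll M^{k+1/2}$ together with the extra $1/\log^2(n/r)$ factor — exactly the mechanism behind Lemma \ref{estimate small z}.

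The main obstacle is to control the off-diagonal Montgomery–Vaughan terms without picking up a stray $\log T$. This is precisely where Lekkerkerker's erroneous lemma 7 entered his treatment, and following the authors' caution I would bypass it by working on the Fourier side: interpret the contour-shifted expression as a windowed Mellin transform of $F_{1}$ and apply Parseval, using the $\log^{-1}$ decay of $F_{1}$ near $y = 0$ to absorb the extra logarithm. The remaining integral $\int_{0}^{\infty}|F_{1}(iy/\sqrt{4N})|^2\,y^{2\sigma-1}\,dy$ is finite by Lemma \ref{estimate small z}, closing the argument.
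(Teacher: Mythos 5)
Your outline correctly identifies the key objects — the reduction of (\ref{mean value Psi}) to a mean-square bound for $\psi_f$ on the critical line, the auxiliary series $g_1$ (your $F_1$) from (\ref{Definition g1 by hypo}), and the role of the $\log^{-1}$ decay from Lemma \ref{estimate small z} — but the argument as written has a genuine gap, and it is precisely at the point that carries all the weight in the paper's proof.

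You apply Parseval to the untwisted Mellin transform (taking $z$ on the imaginary axis, $z = iy$), obtaining $\int_{-\infty}^{\infty}|\Gamma(\sigma+it)\,\psi_f(\sigma+it)|^2\,dt < \infty$, and then you say that, after the contour shift and a Montgomery--Vaughan diagonal count, the ``remaining integral $\int_0^\infty |F_1|^2 y^{2\sigma-1}\,dy$ is finite, closing the argument.'' This does not close it. Because $\Gamma(\sigma+it)$ decays like $e^{-\pi|t|/2}$, the finiteness of that integral over all of $\mathbb{R}$ gives at best $\int_T^{2T}|\psi_f(\sigma+it)|^2\,dt \ll T^{1-2\sigma}e^{2\pi T}$ after dividing out Stirling, which is exponentially large and says nothing about the claim. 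Likewise, in the Montgomery--Vaughan route your diagonal does converge (the $1/\log^2(n/r)$ weight saves it on dyadic blocks, as you indicate), but you concede you cannot control the off-diagonal terms and ``go back to the Fourier side,'' which — as just explained — does not produce a $T$-dependent bound on its own. Both branches of your argument terminate in the same impasse.

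What is missing is the $\epsilon$-rotation, which is the heart of Lekkerkerker's device and of the paper's proof of (\ref{mean value Psi}). Rather than $z=iy$, one substitutes $z=-\exp\{\xi-i\epsilon\}$ with a small parameter $\epsilon\in(0,\tfrac{\pi}{2})$. The Mellin inversion then becomes a Fourier transform of $\Phi_f(t)\,e^{(\pi/2-\epsilon)t}$, and Parseval yields
\begin{equation*}
\intop_{-\infty}^{\infty}|\Phi_f(t)|^2\,e^{(\pi-2\epsilon)t}\,dt \;=\; 2\pi\intop_0^\infty y^{k-1/2}\left|g_1\!\left(-\frac{ye^{-i\epsilon}}{\sqrt{4N}}\right)\right|^2dy \;\ll\; \epsilon^{-k-\frac{1}{2}},
\end{equation*}
the right-hand bound coming from Lemma \ref{estimate small z} with $\text{Im}(z) = y\sin\epsilon$. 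The factor $e^{(\pi-2\epsilon)t}$ on the left-hand side cancels the $e^{-\pi t}$ from $|\Gamma(\sigma+it)|^2$ down to $e^{-2\epsilon t}$, which on $[T,2T]$ with the choice $\epsilon = T^{-1}$ is bounded below by an absolute constant. Restricting the left-hand side to $[T,2T]$, applying Stirling, and plugging in $\epsilon = T^{-1}$ then gives $\int_T^{2T}|\psi_f(\sigma+it)|^2\,dt \ll T^{1/2-k}\cdot T^{k+1/2} = T$. Without tying $\epsilon$ to $T$ in this way, neither Parseval nor Montgomery--Vaughan produces the desired growth rate, so this step needs to be added before your proposal becomes a proof.
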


\bigskip{}

Before giving a proof of this claim, we show how this assertion can
be used to complete the proof of Theorem \ref{theorem 1.2}. From now on, the reasoning
is standard and the corresponding known argument for the zeta case
can be found in Titchmarsh's text [\cite{titchmarsh_zetafunction}, pp.267-268]. However, for completeness,
we shall also present it. Let $S$ be the subset of $(T,2T)$ where
$|I(t)|=J(t)$. Note that this happens if and only if $R_{f}(u)$
does not have a zero of odd order on the interval $(t,t+H)$. We shall estimate
the measure of $S$, $m(S)$, by comparing (\ref{estimate lower bound J(t)})
and (\ref{final estimate for J(t) just before claim}). Indeed, using
(\ref{final estimate for J(t) just before claim}) and claim \ref{our claim}, we get 
\begin{align}
\intop_{S}J(t)\,dt & \geq K_{3}T^{\frac{k}{2}-\frac{1}{4}}\left\{ Hm(S)+\intop_{S}\text{Re}\,\Psi(t)\,dt\right\} \geq K_{3}HT^{\frac{k}{2}-\frac{1}{4}}\,m(S)-K_{3}T^{\frac{k}{2}+\frac{1}{4}}\left\{ \intop_{T}^{2T}|\Psi(t)|^{2}dt\right\} ^{1/2}\nonumber \\
 & \geq K_{3}HT^{\frac{k}{2}-\frac{1}{4}}\,m(S)-K_{3}K_{4}^{\frac{1}{2}}\,T^{\frac{k}{2}+\frac{3}{4}}.\label{Bound lower at the end of the proooooof}
\end{align}
On the other hand, we can bound the previous integral by above if
we use (\ref{Final estimate for I(t)^2}) with $\epsilon=T^{-1}$, which gives
\begin{equation}
\intop_{S}J(t)\,dt=\intop_{S}|I(t)|\,dt\leq\intop_{T}^{2T}|I(t)|\,dt\leq T^{1/2}\left\{ \intop_{T}^{2T}|I(t)|^{2}dt\right\} ^{1/2}\leq K_{1}^{\frac{1}{2}}H^{\frac{1}{2}}T^{\frac{k}{2}+\frac{3}{4}}.\label{Bound at the end of the proof}
\end{equation}

Combining (\ref{Bound lower at the end of the proooooof}) and (\ref{Bound at the end of the proof}),
we find an upper bound for $m(S)$ of the form
\[
K_{3}H\,m(S)<K_{1}^{\frac{1}{2}}H^{\frac{1}{2}}T+K_{3}K_{4}^{\frac{1}{2}}\,T.
\]
Now, we can choose $H$ so large that
\[
K_{4}^{\frac{1}{2}}H^{-1}+K_{1}^{\frac{1}{2}}K_{3}^{-1}H^{-\frac{1}{2}}<\frac{1}{12},
\]
and, with this choice of $H$, conclude that
\begin{equation}
m(S)<\frac{T}{12}.\label{measure of forbidden seeet}
\end{equation}
The argument is completed if we subdivide $(T,2T)$ into $\left[T/2H\right]$ pairs of abutting
subintervals in the form
\[
(T,2T)=\bigcup_{j=1}^{[T/2H]}\left(I_{1}^{(j)}\cup I_{2}^{(j)}\right),
\]
with $|I_{1}^{(j)}|=H$, $j=1,...,\left[\frac{T}{2H}\right]$ and
$|I_{2}^{(j)}|=H$, $j=1,...,\left[\frac{T}{2H}\right]-1$. Suppose
that, for some $\nu\in\mathbb{N}$, the intervals $I_{1}^{(j_{1})},$...,
$I_{1}^{(j_{\nu})}$ consist entirely of points of $S$. Since each
of these intervals has length $H$ and there are $\nu$ such intervals,
then $\nu\leq m(S)/H$. Of the remaining $\left[\frac{T}{2H}\right]-\nu$
pairs of subintervals of $(T,2T)$, either $I_{1}^{(j)}$ or the corresponding $I_{2}^{(j)}$
contains a zero of odd order of $R_{f}(t)$. By our choice of $T$ (\ref{choice of T0!})
and the measure of $S$ (\ref{measure of forbidden seeet}), we find that
\[
\left[\frac{T}{2H}\right]-\nu>\frac{T}{3H}-\frac{m(S)}{H}>\frac{T}{3H}-\frac{T}{12H}=\frac{T}{4H}.
\]

Hence, the interval $(T,2T)$ has at least $T/4H$ zeros of odd order
of $R_{f}(t)$. The same is evidently true for $L\left(\frac{k}{4}+\frac{1}{2}+it,f\right)+L\left(\frac{k}{4}+\frac{1}{2}+it,f|W_{4N}\right)$
and, with the necessary changes, the same can be concluded for $L\left(\frac{k}{4}+\frac{1}{2}+it,f\right)-L\left(\frac{k}{4}+\frac{1}{2}+it,f|W_{4N}\right)$.
This completes the proof. $\blacksquare$
\\

\subsubsection{Proof of Claim \ref{our claim}}
Since, by definition,
\[
\Psi(t):=-\frac{ir^{\frac{k}{2}+\frac{1}{4}}}{\alpha_{f}(r)}\left\{ r^{it}\psi_{f}\left(\frac{k}{2}+\frac{1}{4}+it\right)-r^{i(t+H)}\psi_{f}\left(\frac{k}{2}+\frac{1}{4}+i(t+H)\right)\right\} ,
\]
in order to prove (\ref{mean value Psi}) it is enough to show that
\begin{equation}
\intop_{T}^{2T}\left|\psi_{f}\left(\frac{k}{2}+\frac{1}{4}+it\right)\right|^{2}dt\ll T,\label{estimate to prove claim final theorem}
\end{equation}
where
\[
\psi_{f}(s):=\sum_{n=r+1}^{\infty}\frac{\alpha_{f}(n)}{\log(n/r)\,n^{s}},\,\,\,\,\,\text{Re}(s)>\frac{k}{2}+\frac{5}{4}.
\]

Just like in the case of (\ref{definition f1}), we can construct a series resembling a cusp form attached to $\psi_{f}(s)$ of the form
\begin{equation}
\sum_{n=r+1}^{\infty}\frac{\alpha_{f}(n)}{\log(n/r)}e^{\frac{2\pi inz}{\sqrt{4N}}}:=g_{1}\left(\frac{z}{\sqrt{4N}}\right).\label{Definition g1 by hypo}
\end{equation}
By adapting the proof of lemma \ref{estimate small z}, one can see that $g_{1}\left(z/\sqrt{4N}\right)$
must satisfy the bounds given in (\ref{inequality large Im(z) for cusp f1})
and (\ref{first bound in fact}), which are dependent on the range
of $\text{Im}(z)$. We will find the estimate (\ref{estimate to prove claim final theorem})
as a consequence of a $L^{2}$ estimate of the function 
\begin{equation}
\Phi_{f}\left(t\right):=\left(\frac{2\pi}{\sqrt{4N}}\right)^{-\frac{k}{2}-\frac{1}{4}-it}\Gamma\left(\frac{k}{2}+\frac{1}{4}+it\right)\,\psi_{f}\left(\frac{k}{2}+\frac{1}{4}+it\right),\label{relation between Phi big and Phi Small}
\end{equation}
which, in its turn, will be found by using the estimates for $g_{1}(z/\sqrt{4N})$.

Just like (\ref{as fourier transform almost}), the inversion formula
for the Mellin transform gives 
\begin{equation}
\frac{1}{2\pi}\,\intop_{-\infty}^{\infty}\Phi_{f}\left(t\right)(-iz)^{-\frac{k}{2}-\frac{1}{4}-it}dt=g_{1}\left(\frac{z}{\sqrt{4N}}\right),\,\,\,\,\forall z\in\mathbb{H}.\label{mellin at beginning}
\end{equation}
Let us now take (as above) $\xi\in\mathbb{R}$, $0<\epsilon<\frac{\pi}{2}$
and write $z=-\exp\left\{ \xi-i\epsilon\right\} $: then (\ref{mellin at beginning})
can be viewed in terms of the Fourier transform
\[
\frac{1}{\sqrt{2\pi}}\,\intop_{-\infty}^{\infty}\Phi_{f}(t)e^{\left(\frac{\pi}{2}-\epsilon\right)t}\,e^{-i\xi t}\,dt=\sqrt{2\pi}\,\exp\left\{ \left(\frac{k}{2}+\frac{1}{4}\right)\xi+i\left(\frac{k}{2}+\frac{1}{4}\right)\left(\frac{\pi}{2}-\epsilon\right)\right\} \,g_{1}\left(-\frac{\exp\left\{ \xi-i\epsilon\right\} }{\sqrt{4N}}\right)
\]
and so, by Parseval's formula, 
\[
\intop_{-\infty}^{\infty}|\Phi_{f}(t)|^{2}e^{\left(\pi-2\epsilon\right)t}\,dt=2\pi\,\intop_{-\infty}^{\infty}\exp\left\{ \left(k+\frac{1}{2}\right)\xi\right\} \,\left|g_{1}\left(-\frac{\exp\left\{ \xi-i\epsilon\right\} }{\sqrt{4N}}\right)\right|^{2}d\xi=2\pi\,\intop_{0}^{\infty}y^{k-\frac{1}{2}}\left|g_{1}\left(-\frac{ye^{-i\epsilon}}{\sqrt{4N}}\right)\right|^{2}\,dy
\]

By lemma \ref{estimate small z} (adapted to $g_{1}(z/\sqrt{4N})$), we know that there
exists some $\eta\in(0,1)$ such that
\begin{equation}
g_{1}\left(-\frac{ye^{-i\epsilon}}{\sqrt{4N}}\right)\ll\frac{\left\{ \log\left(\frac{1}{y\sin(\epsilon)}\right)\right\} ^{-1}}{(y\sin(\epsilon))^{\frac{k}{2}+\frac{1}{4}}},\,\,\,\,0<y\sin(\epsilon)<\eta<1,\label{first range}
\end{equation}
while
\begin{equation}
g_{1}\left(-\frac{ye^{-i\epsilon}}{\sqrt{4N}}\right)\ll e^{-\frac{2\pi\sin(\epsilon)}{\sqrt{4N}}y},\,\,\,\,y\sin(\epsilon)\geq\eta.\label{second range}
\end{equation}

Thus, invoking (\ref{first range}) and (\ref{second range}) in the
respective ranges of $y$, we deduce that
\begin{align}
\intop_{-\infty}^{\infty}|\Phi_{f}(t)|^{2}e^{\left(\pi-2\epsilon\right)t}\,dt=2\pi\,\intop_{0}^{\infty}y^{k-\frac{1}{2}}\left|g_{1}\left(-\frac{ye^{-i\epsilon}}{\sqrt{4N}}\right)\right|^{2}\,dy\nonumber \\
<C\left\{ \intop_{0}^{\eta/\sin(\epsilon)}\frac{\left\{ \log\left(\frac{1}{y\sin(\epsilon)}\right)\right\} ^{-2}}{\sin^{k+\frac{1}{2}}(\epsilon)}\,\frac{dy}{y}+\intop_{\eta/\sin(\epsilon)}^{\infty}y^{k-\frac{1}{2}}e^{-\frac{4\pi\sin(\epsilon)}{\sqrt{4N}}y}\,dy\right\} \nonumber \\
<\frac{D}{\sin^{k+\frac{1}{2}}(\epsilon)}\left\{ \intop_{0}^{\eta}\frac{dv}{v\log^{2}(v)}+\intop_{\eta}^{\infty}v^{k-\frac{1}{2}}e^{-\frac{4\pi v}{\sqrt{4N}}}\,dv\right\} <D^{\prime}\epsilon^{-k-\frac{1}{2}},\label{set of inequalities mean value}
\end{align}
for some absolute constant $D^{\prime}>0$.

Using (\ref{set of inequalities mean value}) and the relation between
$\Phi_{f}(t)$ and $\psi_{f}\left(\frac{k}{4}+\frac{1}{2}+it\right)$,
we are ready to establish (\ref{estimate to prove claim final theorem}).
By Stirling's formula, there exists some $T_{0}>0$ for which the
following inequality holds
\begin{equation}
\left|\Gamma\left(\frac{k}{2}+\frac{1}{4}+it\right)\right|\geq\sqrt{\frac{\pi}{2}}\,\,|t|^{\frac{k}{2}-\frac{1}{4}}e^{-\frac{\pi}{2}|t|},\,\,\,\,\,\,|t|\geq T_{0}.\label{simple stirling}
\end{equation}
Thus, if we pick $T>\max\left\{ T_{0},\frac{2}{\pi}\right\} $ and
put $\epsilon=T^{-1}$, we have from (\ref{relation between Phi big and Phi Small})
and (\ref{relation between Phi big and Phi Small}), 
\[
\intop_{T}^{2T}|\Phi_{f}(t)|^{2}e^{\left(\pi-\frac{2}{T}\right)t}\,dt>A\,\intop_{T}^{2T}t^{k-\frac{1}{2}}\left|\psi_{f}\left(\frac{k}{2}+\frac{1}{4}+it\right)\right|^{2}dt>A\,T^{k-\frac{1}{2}}\intop_{T}^{2T}\left|\psi_{f}\left(\frac{k}{2}+\frac{1}{4}+it\right)\right|^{2}dt,
\]
for some positive constant $A$. Combining this inequality and (\ref{set of inequalities mean value})
with $\epsilon=T^{-1}$, we finally deduce 
\[
\intop_{T}^{2T}\left|\psi_{f}\left(\frac{k}{2}+\frac{1}{4}+it\right)\right|^{2}dt<\frac{1}{AT^{k-\frac{1}{2}}}\,\intop_{T}^{2T}|\Phi_{f}(t)|^{2}e^{\left(\pi-\frac{2}{T}\right)t}\,dt<\frac{1}{AT^{k-\frac{1}{2}}}\intop_{-\infty}^{\infty}|\Phi_{f}(t)|^{2}e^{\left(\pi-\frac{2}{T}\right)t}\,dt<\frac{D^{\prime}}{A}\,T,
\]
which proves (\ref{estimate to prove claim final theorem}) and establishes
our claim.

\begin{remark}\label{remark kim}
The proof of the result stated as Theorem \ref{theorem 1.3} follows the same ideas.
Let
\[
\eta_{p/q}\left(s,f\right):=\left(\frac{2\pi}{q}\right)^{-s}\Gamma(s)\,L_{p/q}(s,f).
\]
We start by recalling the functional equation for $L_{p/q}(s,f)$ (\ref{functional equation doyon kim paper}),
\[
\left(\frac{2\pi}{q}\right)^{-s}\Gamma(s)\,L_{p/q}(s,f)=i^{k+\frac{1}{2}}\left(\frac{-q}{p}\right)^{-2k-1}\epsilon_{p}^{2k+1}\,\left(\frac{2\pi}{q}\right)^{-\left(k+\frac{1}{2}-s\right)}\Gamma\left(k+\frac{1}{2}-s\right)\,L_{-\overline{p}/q}\left(k+\frac{1}{2}-s,f\right),
\]
where $p\,\overline{p}\equiv1\mod\,q$ and the symbols $\left(\frac{-q}{p}\right)$
and $\epsilon_{p}$ are defined by (\ref{modular properties half integral weight}).
One can see that, if $p^{2}\equiv1\mod q$ and the the Fourier coefficients
of $f(z)$, $a_{f}(n)$, are real numbers (resp. purely imaginary
numbers) then the function
\[
Z_{f,p/q}\left(t\right):=i^{-\frac{k}{2}-\frac{1}{4}}\left(\frac{-q}{p}\right)^{k+\frac{1}{2}}\epsilon_{p}^{k+\frac{1}{2}}\,\eta_{p/q}\left(\frac{k}{2}+\frac{1}{4}+it,f\right)
\]
is always real for any $t\in\mathbb{R}$ (resp. purely imaginary for
any $t\in\mathbb{R}$). Analogously to Lemma \ref{Lemma Representation}, it is then straightforward
to deduce the integral representation
\begin{equation}
\intop_{-\infty}^{\infty}Z_{f,p/q}\left(t\right)\,(-iz)^{-it}dt=2\pi\,i^{k+\frac{1}{2}}\,\left(\frac{-q}{p}\right)^{k+\frac{1}{2}}\epsilon_{p}^{k+\frac{1}{2}}z^{\frac{k}{2}+\frac{1}{4}}f\left(\frac{z+p}{q}\right),\,\,\,\,\,z\in\mathbb{H}.\label{Representation for Zp/q in the proo}
\end{equation}

If we consider $\xi\in\mathbb{R}$ and let $0<\epsilon<\frac{\pi}{2}$,
then substituting $z=-\exp\left\{ \xi-i\epsilon\right\}$  in (\ref{Representation for Zp/q in the proo}) yields the
Fourier transform
\[
\frac{1}{\sqrt{2\pi}}\intop_{-\infty}^{\infty}Z_{f,p/q}\left(t\right)e^{\left(\frac{\pi}{2}-\epsilon\right)t}e^{-i\xi t}dt=\sqrt{2\pi}\,\left(\frac{-q}{p}\right)^{k+\frac{1}{2}}\epsilon_{p}^{k+\frac{1}{2}}\,\exp\left\{ \left(\frac{k}{2}+\frac{1}{4}\right)\xi-i\left(\frac{k}{2}+\frac{1}{4}\right)\epsilon\right\} f\left(-\frac{\exp\left\{ \xi-i\epsilon\right\} }{q}+\frac{p}{q}\right),
\]
which is analogous to (\ref{first fourier transform}). Since Lemmas \ref{wilton bound}-\ref{estimate small z} also apply to $L_{p/q}(s,f)$, the bounds (\ref{Final estimate for I(t)^2}) and (\ref{mean value Psi}) are also valid in this case and the conclusion of Theorem \ref{theorem 1.3} must follow from them.  
\end{remark}

\section{Concluding Remarks}
Similar results to the ones described here can be established for
$L-$functions attached to cusp forms with integral weight and level
$N$. In this case, let us recall the definition of the Fricke involution
as
\[
\left(f|W_{N}\right)(z)=i^{k}N^{-k/2}z^{-k}f\left(-\frac{1}{Nz}\right).
\]
If we adapt the proof of our Theorem \ref{theorem 1.2}, we can deduce the following integral analogue.

\begin{theorem}
Let $N$ be a perfect square and $f(z)=\sum_{n=1}^{\infty}a_{f}(n)\,e^{2\pi inz}\in S_{k}\left(\Gamma_{0}(N)\right)$,
with $a_{f}(n)$ being either real or purely imaginary numbers.

If $\mathcal{N}_{0}^{\pm}(T)$ represents the number of zeros of odd
order of $L(s,f)\pm L(s,f|W_{N})$ written in the form $s=\frac{k}{2}+it,$
$0\leq t\leq T$, then there exists some $d>0$ such that 
\begin{equation}
\liminf_{T\rightarrow\infty}\,\frac{\mathcal{N}_{0}^{\pm}(T)}{T}>d.\label{omega statement for integral weight}
\end{equation}
\end{theorem}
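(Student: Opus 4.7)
The plan is to adapt the proof of Theorem \ref{theorem 1.2} essentially verbatim, replacing the half-integral weight $k+\tfrac{1}{2}$ by the integral weight $k$, the level $4N$ by $N$, and the critical abscissa $\tfrac{k}{2}+\tfrac{1}{4}$ by $\tfrac{k}{2}$. The functional equation now reads $\Lambda(s,f)=\Lambda(k-s,f|W_N)$ with $\Lambda(s,f)=(2\pi/\sqrt{N})^{-s}\Gamma(s)L(s,f)$, which is symmetric about $s=k/2$. Define
\[
R_f(t):=\tfrac{1}{2}\bigl(\Lambda(\tfrac{k}{2}+it,f)+\Lambda(\tfrac{k}{2}+it,f|W_N)\bigr),\qquad I_f(t):=\tfrac{1}{2i}\bigl(\Lambda(\tfrac{k}{2}+it,f)-\Lambda(\tfrac{k}{2}+it,f|W_N)\bigr),
\]
and exactly as in Lemma \ref{Lemma Representation} (shifting the line of integration, applying the Cahen--Mellin integral, and using the functional equation) derive the Fourier-type representation
\[
\int_{-\infty}^{\infty} R_f(t)\,(-iz)^{-it}\,dt=\pi\, i^{-k/2}z^{k/2}\!\left\{ f\!\left(\tfrac{z}{\sqrt{N}}\right)+(f|W_N)\!\left(\tfrac{z}{\sqrt{N}}\right)\right\},\qquad z\in\mathbb{H}.
\]

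Following Section \ref{section of proof of theorem 1.1} (Hardy--Littlewood setup), I would fix $H>0$ and $0<\epsilon<\pi/2$, and introduce
\[
I(t):=\int_{t}^{t+H} R_f(u)\,e^{(\frac{\pi}{2}-\epsilon)u}\,du,\qquad J(t):=\int_{t}^{t+H} |R_f(u)|\,e^{(\frac{\pi}{2}-\epsilon)u}\,du.
\]
Setting $z=-\exp\{\xi-i\epsilon\}$ in the representation above, integrating by parts, and applying Parseval's formula yields
\[
\int_{-\infty}^{\infty}|I(t)|^2\,dt\ll\int_{-\infty}^{\infty}\frac{\sin^2(\xi H/2)}{\xi^2}e^{k\xi}\Bigl|f\!\bigl(-\tfrac{e^{\xi-i\epsilon}}{\sqrt{N}}\bigr)+(f|W_N)\!\bigl(-\tfrac{e^{\xi-i\epsilon}}{\sqrt{N}}\bigr)\Bigr|^2 d\xi \ll \epsilon^{-k}\,H,
\]
where we used the integral-weight uniform bound $|f(x+iy)|\ll y^{-k/2}$ (the analogue of (\ref{uniform bound for cusp forms}), now with exponent $k/2$ instead of $k/2+1/4$). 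This is the upper bound counterpart of (\ref{Final estimate for I(t)^2}).

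For the lower bound on $J(t)$, put $\varphi_f(s)=\tfrac{1}{2}(L(s,f)+L(s,f|W_N))=\sum \alpha_f(n)n^{-s}$, let $r$ be the smallest integer with $\alpha_f(r)\neq0$, and imitate Lemma \ref{analytic continuation integral} to introduce the entire function $\psi_f(s)=\sum_{n>r}\alpha_f(n)/(\log(n/r)\,n^s)$ satisfying $r^{-s}\int_s^\infty\varphi_f^\star(z)dz=\psi_f(s)$. Then the computation (\ref{estimate lower bound J(t)})--(\ref{final estimate for J(t) just before claim}) carries over with the exponent $\tfrac{k}{2}-\tfrac{1}{4}$ replaced by $\tfrac{k-1}{2}$, giving
\[
J(t)\geq K_3 T^{(k-1)/2}\{H+\operatorname{Re}\Psi(t)\},\qquad \epsilon=T^{-1},\ t\in[T,2T],
\]
where $\Psi(t)$ is built from $\psi_f$ at heights $t$ and $t+H$. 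The key mean-value estimate $\int_T^{2T}|\Psi(t)|^2\,dt\ll T$ (the analogue of Claim \ref{our claim}) is obtained exactly as before: adapt Lemma \ref{wilton bound} and Lemma \ref{estimate small z} to the integral-weight function $g_1(z/\sqrt{N}):=\sum_{n>r}\alpha_f(n)/\log(n/r)\,e^{2\pi inz/\sqrt{N}}$ (noting that the uniform bound on $f$ now produces the exponent $k/2$ in (\ref{first bound in fact}) and the estimate $\ll y^{-k/2}(\log(1/y))^{-1}$ for small $y$), and then use Parseval on the Mellin-type Fourier transform of $\Phi_f(t)=(2\pi/\sqrt{N})^{-k/2-it}\Gamma(\tfrac{k}{2}+it)\psi_f(\tfrac{k}{2}+it)$ combined with Stirling's formula on $[T,2T]$.

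The main obstacle, though largely bookkeeping, is checking that the proofs of Lemmas \ref{wilton bound} and \ref{estimate small z} really go through without half-integral-weight-specific input; this amounts to verifying that the only ingredients used were (i) the uniform bound on the cusp form, which holds in the integral case with exponent $k/2$, and (ii) the existence of the Fourier expansions of $f|\gamma$ and $f|\gamma^\star$ with rational exponents, which is automatic in the integral-weight setting (and simpler, since no $\epsilon_d$ factors intervene). Once the lemmas are in place, the conclusion is standard Hardy--Littlewood: contrasting the upper bound $\int_T^{2T}|I(t)|^2dt\ll T$ with the lower bound $J(t)\geq K_3 T^{(k-1)/2}(H+\operatorname{Re}\Psi(t))$, choosing $H$ large enough, and partitioning $(T,2T)$ into $\lfloor T/2H\rfloor$ pairs of abutting subintervals of length $H$, yields at least $T/(4H)$ subintervals containing a zero of odd order of $R_f(t)$ (resp.\ $I_f(t)$). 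This gives $\mathcal{N}_0^{\pm}(T)\gg T$ and completes the proof.
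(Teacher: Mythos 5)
Your proposal is correct and takes the same approach as the paper: the paper itself gives no detailed proof of this theorem, stating only that it follows by adapting the proof of Theorem \ref{theorem 1.2}, and your walkthrough is precisely that adaptation, with the exponents ($k/2+1/4\mapsto k/2$ in the uniform bound and Lemma \ref{estimate small z}, $k/2-1/4\mapsto (k-1)/2$ in the Stirling lower bound, $\epsilon^{-(k+1/2)}\mapsto\epsilon^{-k}$ in the Parseval estimate) and the level ($\sqrt{4N}\mapsto\sqrt{N}$) correctly tracked so the final $m(S)<T/12$ comparison still balances.
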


Taking into consideration the simple modifications outlined in remark
\ref{remark kim}, a similar extension holds for the twisted $L-$functions studied
by Kim.
\begin{theorem}
Let $N$ be a positive integer and $\frac{p}{q}$ a rational number
which is $\Gamma_{0}(N)-$equivalent to $i\infty$ and such that $p^{2}\equiv1\mod q$.
Moreover, let $f(z)=\sum_{n=1}^{\infty}a_{f}(n)\,e^{2\pi inz}\in S_{k}\left(\Gamma_{0}(N)\right)$,
with $a_{f}(n)$ being either real or purely imaginary numbers.

If $\mathcal{N}_{0,p/q}(T)$ denotes the number of zeros of $L_{p/q}(s,f)$ written in the form $s=\frac{k}{2}+it,$ $0\leq t\leq T$, then there exists
some $d>0$ such that
\begin{equation}
\liminf_{T\rightarrow\infty}\,\frac{\mathcal{N}_{0,p/q}(T)}{T}>d.\label{p/q result lekkerkerkerkerkerk}
\end{equation}
\end{theorem}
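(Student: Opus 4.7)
The plan is to run, with minor bookkeeping changes, the very argument used for Theorem \ref{theorem 1.2} (in the form indicated in Remark \ref{remark kim}), transposing it from half-integral weight with critical line $\operatorname{Re}(s)=k/2+1/4$ to integral weight with critical line $\operatorname{Re}(s)=k/2$. The structure remains: a Mellin-Cahen integral representation ties the completed twisted $L$-function to a value of the cusp form on the upper half-plane, and then the Lekkerkerker variant of the Hardy-Littlewood method, applied via $L^{2}$-bounds and partial summation, forces many sign changes of a real (or purely imaginary) version of $L_{p/q}$ on the critical line.

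Concretely, I would set $\eta_{p/q}(s,f):=(2\pi/q)^{-s}\Gamma(s)L_{p/q}(s,f)$ and use the integral-weight analogue of the functional equation \eqref{functional equation doyon kim paper}, which takes the form $\eta_{p/q}(s,f)=\chi(p,q)\,\eta_{-\overline{p}/q}(k-s,f)$ for an explicit unimodular $\chi(p,q)$. The hypothesis $p^{2}\equiv 1\bmod q$ gives $-\overline{p}\equiv -p\bmod q$, so that, after multiplication by a suitable unimodular constant $c_{p,q}$, the function
\[
Z_{f,p/q}(t):=c_{p,q}\,\eta_{p/q}\!\left(\tfrac{k}{2}+it,f\right)
\]
is real when $a_f(n)\in\mathbb{R}$ and purely imaginary when $a_f(n)\in i\mathbb{R}$. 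Shifting the contour in the Cahen-Mellin integral, exactly as in the proof of Lemma \ref{Lemma Representation}, produces the representation
\[
\int_{-\infty}^{\infty}Z_{f,p/q}(t)(-iz)^{-it}dt=2\pi c_{p,q}'\,z^{k/2}f\!\left(\tfrac{z+p}{q}\right),\qquad z\in\mathbb{H},
\]
which is the integral-weight twist of \eqref{Representation for Zp/q in the proo}, and whose right-hand side obeys the standard uniform bound $|f(x+iy)|\ll y^{-k/2}$ valid for any cusp form of integral weight and arbitrary level $N$.

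With this in hand I would run the Hardy-Littlewood/Lekkerkerker scheme of Section 5. Fix $H>0$ to be chosen, set $\epsilon=1/T$, and for $t\in[T,2T]$ define $I(t)=\int_{t}^{t+H}Z_{f,p/q}(u)e^{(\pi/2-\epsilon)u}du$ and $J(t)=\int_{t}^{t+H}|Z_{f,p/q}(u)|e^{(\pi/2-\epsilon)u}du$. Substituting $z=-\exp(\xi-i\epsilon)$ into the representation above turns $I(t)$ into a Fourier transform whose $L^{2}$-norm, via Parseval and the uniform bound on $f((z+p)/q)$, satisfies $\int_{-\infty}^{\infty}|I(t)|^{2}dt\ll \epsilon^{-k}H$. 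For the lower bound on $J(t)$ one isolates the leading term $a_{f}(r)e^{2\pi ipr/q}r^{-s}$ of $L_{p/q}(s,f)$ (with $r$ the smallest index with $a_f(r)\ne 0$) and applies the analogues of Lemmas \ref{wilton bound}, \ref{analytic continuation integral} and \ref{estimate small z} to the coefficient sequence $\{a_{f}(n)e^{2\pi ipn/q}\}$, yielding a mean-square bound $\int_{T}^{2T}|\Psi(t)|^{2}dt\ll T$ for the analogue of Claim \ref{our claim}. Combining the two bounds in the standard manner of Titchmarsh's argument (reproduced in Section 6), one obtains a set $S\subset(T,2T)$ of measure $<T/12$ off which every abutting interval of length $H$ contains a zero of odd order of $Z_{f,p/q}$, so $\mathcal{N}_{0,p/q}(T)\gg T/H$.

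The one point requiring genuine verification, rather than bookkeeping, is the transfer of Lemmas \ref{wilton bound} and \ref{estimate small z} to the twisted coefficient sequence $\{a_{f}(n)e^{2\pi ipn/q}\}$. These lemmas rest on (i) the power series $F(\zeta)=\sum a_f(n)\zeta^n$ having $f(z)=F(e^{2\pi iz})$, and (ii) the uniform bound on $|f(z)|$. Replacing $\zeta$ by $e^{2\pi ip/q}\zeta$ and $f(z)$ by $f(z+p/q)$, one sees that the same contour integrals and partial-summation manipulations go through, with $f((z+p)/q)$ in place of $f(z/\sqrt{4N})$; crucially, the uniform bound survives the shift since $\operatorname{Im}((z+p)/q)=\operatorname{Im}(z)/q$. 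This is the main technical adjustment, but it is routine since the arguments depend only on the cusp-form uniform bound and on convexity of $1/\log(n/r)$, neither of which is affected by twisting by an additive unitary character. Once this is checked, the conclusion \eqref{p/q result lekkerkerkerkerkerk} follows verbatim from the argument of Section 6.
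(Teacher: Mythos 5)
Your proposal is correct and follows the same route the paper indicates: the paper proves this theorem only by remarking that the argument of Theorem \ref{theorem 1.2}, modified as in Remark \ref{remark kim}, carries over to integral weight with critical line $\operatorname{Re}(s)=k/2$, and your write-up is a faithful (and more explicit) execution of exactly that transfer, including the correct change of exponent $\epsilon^{-k-1/2}\mapsto\epsilon^{-k}$ and the observation that the uniform bound $|f(x+iy)|\ll y^{-k/2}$ survives the additive shift $z\mapsto(z+p)/q$.
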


Of course, when $f$ is a cusp form of weight $k$ with respect to
the full modular group, then (\ref{omega statement for integral weight})
and (\ref{p/q result lekkerkerkerkerkerk}) are given in Lekkerkerker's
thesis {[}\cite{lekkerkerker_thesis}, Chapter IV, Theorems 15 and 16{]}. It
is also well-known that Lekkerkerker's result was superseded by Hafner
\cite{Hafner_cusp forms}, who proved that $L-$functions attached to holomorphic
cusp forms of integral weight have a positive proportion of their
zeros at the critical line $\text{Re}(s)=\frac{k}{2}$.

When $f\in S_{k+\frac{1}{2}}\left(\Gamma_{0}(4N)\right)$, one might
study the distribution of the zeros of $L(s,f)$ at the line $\text{Re}(s)=\frac{k}{2}+\frac{1}{4}$
by taking another point of view. For instance, one may consider
the problem of finding suitable bounds for the gaps between consecutive zeros of
$L\left(\frac{k}{2}+\frac{1}{4}+it,f\right)$. When $f(z)$ is a cusp
form of integral weight for the full modular group with real or purely
imaginary coefficients, it was found by Jutila {[}\cite{dirichlet_polynomials}, p.
140, Theorem 5{]} that, for any fixed $\epsilon>0$ and $T\geq T_{0}(\epsilon)$,
there is always a zero $\frac{k}{2}+i\tau$ of $L\left(s,f\right)$
such that $\tau\in [T,T+T^{\frac{1}{3}+\epsilon}]$. The
proof of Jutila is particularly beautiful because it involves the
transformation of some exponential sums via Vorono\"i's summation formula.
Yogananda {[}\cite{Yogananda}, p. 21, Theorem 4.1{]} extended Jutila's
result to certain $L-$functions attached to cusp forms belonging
to $S_{k}\left(\Gamma_{0}(N)\right)$.

An examination of Yogananda's argument (which is motivated by a remark
at the end of Jutila's paper {[}\cite{dirichlet_polynomials}, p. 156{]}) shows that
only Rankin's mean value estimate for the coefficients of $f$ (and
not something stronger like Deligne's bound) is needed to establish
this gap between zeros of $L(s,f)$. Motivated by this, it might be
plausible (provided that all transformation formulas of Vorono\"i type
are valid in this case) to conjecture an analogous result for $L-$functions in the half-integral case.

\begin{conjecture}
Let $N\in\mathbb{N}$ and $f(z)=\sum_{n=1}^{\infty}a_{f}(n)\,e^{2\pi inz}\in S_{k+\frac{1}{2}}\left(\Gamma_{0}(4N)\right)$
be such that $f|W_{4N}=f$ or $f|W_{4N}=-f$. Assume also that $a_{f}(n)$
are either real or purely imaginary numbers. Then, for any $\epsilon>0$,
there exists $T_{0}(\epsilon)$ such that, for all $T\geq T_{0}(\epsilon)$,
$L(s,f)$ has a zero of the form $s=\frac{k}{2}+\frac{1}{4}+i\tau$ with $\tau\in[T,T+T^{\frac{1}{3}+\epsilon}]$.
\end{conjecture}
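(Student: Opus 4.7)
The plan is to adapt the methodology of Jutila \cite{dirichlet_polynomials} and Yogananda \cite{Yogananda}, whose central ingredient is a Vorono\"i-type summation formula. I would argue by contradiction: assume that for some arbitrarily large $T$ no zero of $L(\tfrac{k}{2}+\tfrac{1}{4}+it,f)$ lies in the interval $I_T := [T,\,T+H]$ with $H := T^{1/3+\epsilon}$. Under the hypotheses $f|W_{4N}=\pm f$ and $a_f(n)\in\mathbb{R}\cup i\mathbb{R}$, the Hardy-type function
\[
Z_f(t) := \chi_f(t)^{-1/2}\,L\!\left(\tfrac{k}{2}+\tfrac{1}{4}+it,f\right),
\]
obtained by extracting the unit-modulus phase $\chi_f(t)$ from $\left(\tfrac{2\pi}{\sqrt{4N}}\right)^{-\frac{k}{2}-\frac{1}{4}-it}\Gamma(\tfrac{k}{2}+\tfrac{1}{4}+it)$, is real-valued on $\mathbb{R}$ and has the same real zeros as $L(\cdot,f)$ on the critical line. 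The working hypothesis forces $Z_f$ to be of constant sign on $I_T$, so
\[
\int_{I_T}|Z_f(t)|\,dt \;=\; \Bigl|\int_{I_T}Z_f(t)\,dt\Bigr|,
\]
and the strategy is to derive lower and upper bounds for these two expressions that become incompatible precisely when $H=T^{1/3+\epsilon}$.

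For the lower bound on $\int_{I_T}|Z_f(t)|\,dt$, I would establish a mean-square estimate of the form $\int_{I_T}|Z_f(t)|^2\,dt \gg H\log T$, via an Atkinson--Jutila type approximate functional equation, in the spirit of Claim \ref{our claim} and the lemmas in Section \ref{lemmata Lekkerkerker}. Combining this with the convex estimate (\ref{estimate convex for cusp form L function}) to control the third moment $\int_{I_T}|Z_f(t)|^{3}\,dt$, the standard H\"older interpolation $\int|Z_f|\geq(\int|Z_f|^{2})^{2}/\int|Z_f|^{3}$ would then deliver a lower bound of the shape $H^{1-\epsilon}$.

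For the upper bound on $\bigl|\int_{I_T}Z_f(t)\,dt\bigr|$, I would invoke a Vorono\"i-type summation formula for $L(s,f)$ so as to rewrite (after suitable smoothing of the characteristic function of $I_T$)
\[
\int_{I_T}Z_f(t)\,dt \;=\; \sum_{n\geq 1}\frac{a_f(n)}{n^{\frac{k}{2}+\frac{1}{4}}}\,\mathcal{K}(n;T,H),
\]
where $\mathcal{K}$ is a Bessel-type transform of the smoothing weight, localized by stationary phase in a window of length $\asymp T^{2}/H$ around $n\asymp T^{2}$. Cauchy--Schwarz together with Rankin's mean-value estimate (\ref{mean value estimate cuuuuusp}) then reduces the problem to bounding $\sum_{n}|\mathcal{K}(n;T,H)|^{2}$; the stationary-phase analysis of $\mathcal{K}$ would yield an upper bound which, balanced against the lower bound derived above, forces the gap exponent $\tfrac{1}{3}+\epsilon$ in exact analogy with Jutila's treatment of $\zeta(s)$.

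The hard part will be establishing the required Vorono\"i identity for $S_{k+\frac{1}{2}}(\Gamma_0(4N))$ in a form compatible with this argument. In the integer-weight case the identity emerges by summing the functional equations at all rational cusps $p/q$ equivalent to $i\infty$ and combining via Poisson summation. The half-integer setting introduces two genuine complications: the Shimura symbol $\bigl(\tfrac{-q}{p}\bigr)$ and the root number $\epsilon_p^{2k+1}$ appearing in (\ref{functional equation doyon kim paper}) must be shown to combine coherently across the orbit of cusps rather than destroy the necessary cancellation; and the Bessel kernel produced is of half-integer index, typically a Hankel combination of $J_{k-1/2}$ and $Y_{k-1/2}$, so the stationary-phase analysis of $\mathcal{K}$ must be redone relative to Yogananda's integer-weight case. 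A careful verification that neither obstruction survives is what would make the conjecture into a theorem.
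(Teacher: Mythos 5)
The statement you are addressing is stated in the paper as a \emph{conjecture}, not a theorem: the paper offers no proof, only the heuristic that an argument modelled on Jutila \cite{dirichlet_polynomials} and Yogananda \cite{Yogananda} should succeed once a suitable Vorono\"i-type transformation formula is available for half-integral weight. Your proposal is faithful to that heuristic and correctly isolates the principal obstacle, namely the construction of a Vorono\"i identity compatible with the Shimura symbol $\left(\frac{-q}{p}\right)$, the root number $\epsilon_{p}^{2k+1}$ appearing in (\ref{functional equation doyon kim paper}), and the resulting half-integer Bessel kernel; you also honestly flag that this is what separates the conjecture from a theorem. So there is no ``paper's proof'' to compare against, and your contribution is to flesh out the intermediate plumbing at a sketch level.

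Even so, one step of your sketch would not close as written, independently of the Vorono\"i difficulty. You propose to lower-bound $\intop_{I_{T}}|Z_{f}(t)|\,dt$ via H\"older in the form $\intop|Z_{f}|\geq\left(\intop|Z_{f}|^{2}\right)^{2}/\intop|Z_{f}|^{3}$, bounding the cubic moment by the convexity estimate (\ref{estimate convex for cusp form L function}). But that estimate gives $|L(\frac{k}{2}+\frac{1}{4}+it,f)|\ll|t|^{1+\epsilon}$ on the critical line (it is obtained from the Hecke pointwise bound, not from the mean-value estimate (\ref{mean value estimate cuuuuusp}), so it is even worse than the degree-two convexity exponent $\frac{1}{2}$). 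This yields $\intop_{I_{T}}|Z_{f}|^{3}\ll H\,T^{3+\epsilon}$, and H\"older then gives only $\intop_{I_{T}}|Z_{f}|\gg H\,T^{-3-\epsilon}$, which is useless. Jutila and Yogananda do not obtain their lower bound by moment interpolation; it comes directly out of the main term of a smoothed approximate functional equation (equivalently, a Vorono\"i transformation of the Dirichlet polynomial) in which the leading Fourier coefficient contributes $\gg H$ without any higher moment entering. Replacing your H\"older step with that mechanism is necessary, and it means the Vorono\"i machinery is needed for the lower bound too, not only for the upper bound on $\left|\intop_{I_{T}}Z_{f}\right|$.
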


There are yet other directions of research that can be taken. Together
with Yakubovich \cite{RYCE}, the author of this paper derived the following result. 

\paragraph*{Theorem D \cite{RYCE}:}
\textit{Let $f(z)$ be a cusp form of weight $k$ for the full modular
group with real Fourier coefficients $a_{f}(n)$. Also, let $L(s,f)$  represent the corresponding Dirichlet series.}

\textit{If $(c_{j})_{j\in\mathbb{N}}$ is a sequence of non-zero real numbers
such that $\sum_{j=1}^{\infty}\,|c_{j}|<\infty$, $\left(\lambda_{j}\right)_{j\in\mathbb{N}}$
is a bounded sequence of distinct real numbers\footnote{In the statement of Theorem 1.4. of \cite{RYCE} we require that the
sequence $\left(\lambda_{j}\right)_{j\in\mathbb{N}}$ attains its
bounds. However, in view of Remark 5.4 of \cite{RYCE}, this condition
is not necessary when we are working with combinations involving entire Dirichlet
series.} and $z$ is a complex number satisfying the condition
\begin{equation}
z\in\mathscr{D}:=\left\{ z\in\mathbb{C}\,:\,|\text{Re}(z)|<2\sqrt{\pi},\,\,|\text{Im}(z)|<2\sqrt{\pi}\right\} ,\label{condition cusp case}
\end{equation}
then the function
\begin{equation}
\sum_{j=1}^{\infty}c_{j}\,\left(2\pi\right)^{-s-i\lambda_{j}}\Gamma(s+i\lambda_{j})\,L\left(s+i\lambda_{j},f\right)\,\left\{ _{1}F_{1}\left(k-s-i\lambda_{j};\,k;\,\frac{z^{2}}{4}\right)+\,_{1}F_{1}\left(k-\overline{s}+i\lambda_{j};\,k;\,\frac{\overline{z}^{2}}{4}\right)\right\} \label{function cusp case}
\end{equation}
has infinitely many zeros on the critical line $\text{Re}(s)=\frac{k}{2}$.}

\bigskip{}

In the statement above, $_{1}F_{1}(a;c;w)$ represents the confluent
hypergeometric function. The previous result is, in fact, a generalization
of a Theorem of Dixit, Kumar, Maji and Zaharescu \cite{DKMZ}. Like
the proof presented in \cite{DKMZ}, deriving it requires the use
of a generalization of the theta transformation formula. We conclude this
paper by mentioning that, using our variant of de la Vall\'ee Poussin's
method given in the proof of Theorem \ref{theorem 1.1}, it is possible to obtain an estimate for the
number of critical zeros of the function (\ref{function cusp case}). The details of this study and related topics will be
presented elsewhere.

\bigskip{}

\textit{Acknowledgements:} This work was partially supported by CMUP, member of LASI, which is financed by national funds through FCT - Fundação para a Ciência e a Tecnologia, I.P., under the projects with reference UIDB/00144/2020 and UIDP/00144/2020. We also acknowledge the support from FCT (Portugal) through the PhD scholarship 2020.07359.BD. The author would like to thank to Semyon Yakubovich for unwavering support and guidance throughout the writing of this paper. 

\footnotesize

\end{document}